
\documentclass[final,leqno]{siamltex}

%
\usepackage[cmex10]{amsmath}
\usepackage{amssymb} 

%
\usepackage{algorithmic}

%
\usepackage{array}

%
\usepackage{url}

\DeclareMathOperator*{\argmin}{arg\,min}
\DeclareMathOperator*{\mymax}{max}

\def\mplus{\mathrel{%
  \ooalign{\raise.29ex\hbox{$\scriptscriptstyle\mathbf{+}$}\cr}}}
\usepackage{color}
\usepackage{multirow}

\newcolumntype{x}[1]{%
>{\centering\hspace{0pt}}p{#1}}%


\usepackage{comment}
\newtheorem{remark}[theorem]{remark} 
\newtheorem{assumption}[theorem]{assumption} 

\usepackage{graphicx}
\usepackage{caption}
\usepackage{subcaption}
 \usepackage{float}

\begin{document}
\title{A Preconditioner for A Primal-Dual Newton Conjugate Gradients Method for Compressed Sensing Problems\\ \vspace{0.5cm}\textnormal{D\MakeLowercase{ecember} 30, 2014, \MakeLowercase{revised in} J\MakeLowercase{uly} 2, 2015}}

\author{Ioannis~Dassios\thanks{I. Dassios is with the School of Mathematics and Maxwell Institute, The University of Edinburgh,
Peter Guthrie Tait Road, Edinburgh EH9 3FD, United Kingdom e-mail: idassios@ed.ac.uk. I. Dassios is supported by EPSRC Grant EP/I017127/1} \and
        Kimon~Fountoulakis\thanks{K. Fountoulakis is with the School of Mathematics and Maxwell Institute, The University of Edinburgh,
Peter Guthrie Tait Road, Edinburgh EH9 3FD, United Kingdom e-mail: K.Fountoulakis@sms.ed.ac.uk.} \and
        Jacek~Gondzio\thanks{J. Gondzio is with the School of Mathematics and Maxwell Institute, The University of Edinburgh,
Peter Guthrie Tait Road, Edinburgh EH9 3FD, United Kingdom e-mail: J.Gondzio@ed.ac.uk.}
}

\markboth{Journal of \LaTeX\ Class Files,~Vol.~11, No.~4, December~2012}%
{Shell \MakeLowercase{\textit{et al.}}: Bare Demo of IEEEtran.cls for Journals}

\maketitle

\begin{abstract}
In this paper 
we are concerned with the solution of Compressed Sensing (CS) problems where the signals to be recovered are sparse in coherent and redundant dictionaries. 
We extend the primal-dual Newton Conjugate Gradients (pdNCG) method in \cite{ctpdnewton} for CS problems. We provide an inexpensive and provably effective preconditioning technique for linear systems
using pdNCG. 
Numerical results are presented on CS problems which demonstrate the performance of pdNCG with the proposed preconditioner compared to state-of-the-art existing solvers.
\end{abstract}

\begin{keywords}
compressed sensing, $\ell_1$-analysis, total-variation, second-order methods, Newton conjugate gradients
\end{keywords}

\section{Introduction}

CS is concerned with recovering a signal $\tilde x\in\mathbb{R}^{n}$ by observing a linear combination of the signal
$$
\tilde b = A \tilde x,
$$
where $A\in \mathbb{R}^{m\times n}$ is an under-determined linear operator with $m < n$
and $\tilde b \in \mathbb{R}^{m}$ are the observed measurements. 
Although this system has infinitely many solutions, reconstruction of $\tilde x$ is possible due to its assumed properties.
In particular, $\tilde x$ is assumed to have a sparse image through a coherent and redundant dictionary 
$W\in E^{n\times l}$, where $E=$ $\mathbb{R}$ or $\mathbb{C}$ and $n \le l$.
More precisely, $W^* \tilde x$, is sparse, i.e. it has only few non-zero components,
where the star superscript denotes the conjugate transpose.
If $W^* \tilde{x}$ is sparse, under certain conditions on matrices $A$ and $W$ (discussed in Subsection \ref{subsec:wrip}) 
the optimal solution of the linear problem
$$
\mbox{minimize} \ \|W^* x\|_1, \quad \mbox{subject to:} \quad Ax=\tilde b
$$
is $\tilde x$, where $\|\cdot\|_1$ is the $\ell_1$-norm. 

Frequently measurements $\tilde b$ might be contaminated with noise, i.e. one measures $b = \tilde b + e$ instead, 
where $e$ is a vector of noise, usually modelled as Gaussian with zero-mean and bounded Euclidean norm.
In addition, in realistic applications, $W^*\tilde x$ might not be exactly sparse, but its mass might be concentrated only on few of its components, while the rest
are rapidly decaying. 
In this case, (again under certain conditions on matrices $A$ and $W$) the optimal solution of the following problem
\begin{equation}\label{prob1}
\mbox{minimize} \  f_{c}(x) :=  c\|W^* x\|_1 + \frac{1}{2}\|Ax-b\|^2_2,
\end{equation}
is proved to be a good approximation to $\tilde x$. In \eqref{prob1}, $c$ is an a-priori chosen positive scalar and $\|\cdot\|_2$ is the Euclidean norm.

\subsection{Brief description of CS applications}\label{subsec:apps} 

An example of $W$ being redundant and coherent with orthonormal rows is the curvelet frame where an image is assumed to have an approximately sparse representation \cite{apps3}. 
Moreover, for radar and sonar systems it is frequent that Gabor frames are used
in order to reconstruct pulse trains from CS measurements \cite{apps4}.
For more applications a small survey is given in \cite{l1analysis}.
Isotropic Total-Variation (iTV) is another application of CS, which exploits the fact that digital images frequently have slowly varying pixels, except along edges. This property implies that digital images 
with respect to the discrete nabla operator, i.e. local differences of pixels, are approximately sparse.
For iTV applications, matrix $W\in \mathbb{C}^{n \times n}$ is square, complex and rank-deficient with $rank (W)=n-1$.
An alternative to iTV is $\ell_1$-analysis, where matrix $W$ is a Haar wavelet transform. However, a more pleasant to the eye reconstruction result is obtained by solving the iTV problem compared to the $\ell_1$-analysis problem,
see \cite{tvrobust}.


\subsection{Conditions and properties of CS matrices}\label{subsec:wrip}

There has been an extensive amount of literature studying conditions and properties of matrices $A$ and $W$ which guarantee recoverability of a good approximation of $\tilde x$ by solving problem \eqref{prob1}.
For a thorough analysis we refer the reader to \cite{csimplications,l1analysis,tvrobust}.
The previously cited papers use
a version of the well-known \textit{Restricted Isometry Property} (RIP) \cite{l1analysis}, which is repeated below.

\begin{definition}\label{def:1}
The restricted isometry constant of a matrix $A\in \mathbb{R}^{m\times n}$ adapted to $W\in {E}^{n\times l}$ is defined as the smallest $\delta_q$ such that
$$
(1-\delta_q)\|Wz\|_2^2 \le \|AWz\|_2^2 \le (1+\delta_q)\|Wz\|_2^2
$$
for all at most $q$-sparse $z\in {E}^{l}$, where $E=\mathbb{R} \mbox{ or } \mathbb{C}$.
\end{definition}

For the rest of the paper we will refer to Definition \ref{def:1} as W-RIP.
It is proved in Theorem $1.4$ in \cite{l1analysis} that if $W\in E^{n\times l}$ has orthonormal rows with $n \le l$ and if $A$, $W$ satisfy the W-RIP with $\delta_{2q} \le 8.0e$-$2$, then the solution $x_c$ obtained 
by solving problem \eqref{prob1}
satisfies
\begin{equation}\label{eq:160}
\|x_c - \tilde x\|_2 = C_0 \|e\|_2  + C_1 \frac{\|W^*x_c - (W^*\tilde x)_q\|_1}{\sqrt{q}},
\end{equation}
where $(W^*\tilde x)_q$ is the best $q$-sparse approximation of $W^*\tilde x$, $C_0$ and $C_1$ are small constants and only depend on $\delta_{2q}$. 
It is clear that $W^* \tilde x$ must have $l-q$ rapidly decaying components, in order for 
$\|x_c - \tilde x\|_2$ to be small and the reconstruction to be successful. 

iTV is a special case of $\ell_1$-analysis where matrix $W$ does not have orthonormal rows, hence, result \eqref{eq:160} does not hold. For iTV
there are no conditions on $\delta_{2q}$ such that a good reconstruction is assured. However, there exist results which directly impose restrictions on the number of measurements $m$, see Theorems $2$, $5$ and $6$ in \cite{tvrobust}.
Briefly, in these theorems it is mentioned that if $m \ge q \log(n)$ linear measurements are acquired for which matrices $A$ and $W$ satisfy the W-RIP for some $\delta_q < 1$, then, similar reconstruction guarantees 
as in \eqref{eq:160} are obtained for iTV.
Based on the previously mentioned results regarding reconstruction guarantees it is natural to assume that for iTV a similar condition applies, i.e. $\delta_{2q} < 1/2$. Hence, we make the following assumption.
\begin{assumption}\label{assum:1}
The number of nonzero components of $W^*x_c$, denoted by $q$, and the dimensions $l$, $m$, $n$ are such that matrices $A$ and $W$ satisfy W-RIP for some $\delta_{2q} < 1/2$. 
\end{assumption}
This assumption will be used in the spectral analysis of our preconditioner in Section \ref{sec:cs}.

Another property of matrix $A$ is the near orthogonality of its rows.
Indeed many applications in CS use matrices $A$ that satisfy
\begin{equation}\label{bd8}
\| AA^\intercal - I_m \|_2 \le \delta,
\end{equation}
with a small constant $\delta\ge 0$.
Finally, through the paper we will make use of the following assumption  
\begin{equation}\label{bd54}
\mbox{Ker}(W^*)\cap \mbox{Ker}(A) = \{0\},
\end{equation}
This is commonly used assumption in the literature, see for example \cite{fadili}.

\subsection{Contribution}
In \cite{ctpdnewton}, Chan, Golub and Mulet, proposed a primal-dual Newton Conjugate Gradients method for image denoising and deblurring problems.
In this paper we modify their method and adapt it for CS problems. There are two major contributions.

First, we propose an inexpensive preconditioner for fast solution of systems using pdNCG when applied to CS problems with coherent and redundant dictionaries. The proposed preconditioner 
is a generalization of the preconditioner in \cite{fountoulakisGondzio} for CS problems with incoherent dictionaries.
We analyze the limiting behaviour of our preconditioner and prove that the eigenvalues of the preconditioned matrices are clustered around one.
This is an essential property that guarantees that only a few iterations of CG will be needed to approximately solve the linear systems. Moreover, 
we provide computational evidence that the preconditioner works well not only close to the solution (as predicted by its spectral analysis) but also 
in earlier iterations of pdNCG.

Second, we demonstrate that despite being a second-order method, pdNCG can be more efficient than specialized first-order methods for CS problems of our interest,
even on large-scale instances.
This performance is observed in several numerical experiments presented in this paper. 
We believe that the reason for this is that pdNCG, as a second-order method, captures the curvature of the problems, which results in sufficient decrease in the number of iterations compared 
to first-order methods. This advantage comes with the computational cost of having to solve a linear system at every iteration. 
However, inexact solution of the linear systems using CG combined with the proposed
efficient preconditioner crucially reduces the computational costs per iteration.

\subsection{Format of the paper and notation}
The paper is organized as follows. 
In Section \ref{sec:smooth}, problem \eqref{prob1} is replaced by a smooth approximation; the $\ell_1$-norm is approximated by the pseudo-Huber function.
Derivation of pseudo-Huber function is discussed and its derivatives are calculated.
In Section \ref{section:optcond}, a primal-dual reformulation of the approximation to problem \eqref{prob1} and its optimality conditions are obtained.
In Section \ref{sec:pdNCG}, pdNCG is presented. For convergence analysis of pdNCG method the reader is referred to \cite{ctpdnewton,2ndpaperstrongly}.
In Section \ref{sec:cs}, a preconditioning technique is described for controlling the spectrum of matrices in systems which arise. 
In Section \ref{sec:cont}, a continuation framework for pdNCG is described.
In Section \ref{secNumExp}, numerical experiments are discussed that present the efficiency of pdNCG. Finally, in Section \ref{sec:concl}, conclusions are made. 

Throughout the paper, $\|\cdot\|_1$ is the $\ell_1$-norm, $\|\cdot\|_2$ is the Euclidean norm, $\|\cdot\|_\infty$ the infinity norm and $|\cdot|$ is the absolute value. The functions $Re(\cdot)$ and $Im(\cdot)$
take a complex input and return its real and imaginary part, respectively. For simplification of notation, occasionally we will use $Re(\cdot)$ and $Im(\cdot)$ without the parenthesis. 
Furthermore, $diag(\cdot)$ denotes the function which takes as input a vector and outputs a diagonal square matrix with the vector in
the main diagonal. Finally, the super index $c$ denotes the complementarity set, i.e. $\mathcal{B}^c$
is the complement set of $\mathcal{B}$.

\section{Regularization by pseudo-Huber}\label{sec:smooth}
In pdNCG \cite{ctpdnewton} the non-differentiability of the $\ell_1$-norm is treated by applying smoothing. 
In particular, the $\ell_1$-norm is replaced with the pseudo-Huber function \cite{Hartley2004}
\begin{equation}\label{bd6}
 \psi_\mu(W^* x) := \sum_{i=1}^l (({\mu^2+{|W_i^* x|^2}})^{\frac{1}{2}} - \mu),
\end{equation}
where $W_i$ is the $i^{th}$ column of matrix $W\in E^{n\times l}$ and $\mu$ controls the quality of approximation, i.e. for $\mu\to 0$, $\psi_\mu(x)$ tends to the $\ell_1$-norm.
The original problem \eqref{prob1} is approximated by
\begin{equation}\label{prob2}
\mbox{minimize} \  f_c^\mu(x) :=  c\psi_\mu(W^* x) + \frac{1}{2}\|Ax-b\|^2_2.
\end{equation}

%

\subsection{Derivation of pseudo-Huber function}
The pseudo-Huber function \eqref{bd6} can be derived in a few simple steps. 
First, we re-write function $\|W^*x\|_1$ in its dual form
\begin{equation}\label{dualpH}
\|W^*x\|_1 = \sup_{g\in\mathbb{C}^l, \|g\|_\infty \le 1} Re(\bar{g}^*W^*)x,
\end{equation}
where $g$ are dual variables. 
The pseudo-Huber function is obtained by regularizing the previous dual form
\begin{equation}
\psi_\mu(W^*x) = \sup_{g\in\mathbb{C}^l, \|g\|_\infty \le 1} Re(\bar{g}^*W^*)x + \sum_{i=1}^l \left( \mu (1 - |g_i|^2)^{\frac{1}{2}} - \mu \right),
\end{equation}
where $g_i$ is the $i^{th}$ component of vector $g$.

An approach which consists of smoothing a non-smooth function through its dual form is known as 
Moreau's proximal smoothing technique \cite{moreauprox}. Another way to smooth function $\|W^*x\|_1$ is to regularize its dual form with a strongly convex quadratic function $\mu/2\|g\|^2_2$.
Such an approach provides a smooth approximation of $\|W^*x\|_1$ which is known as Huber function and it has been used
in \cite{IEEEhowto:Nesta}. 
Generalizations of the Moreau proximal smoothing 
technique can be found in \cite{nesterovsmoothing} and \cite{beckteboullesmoth}. 

\subsection{Derivatives of pseudo-Huber function}
The gradient of pseudo-Huber function $\psi_{\mu}(W^* x)$ in \eqref{bd6} is given by
\begin{equation*}\label{bd58}
\nabla \psi_{\mu}(W^* x) = Re(W D W^*)x,
\end{equation*}
where $D := diag(D_1, D_2, \cdots, D_l)$ with
\begin{equation}\label{bd61}
D_i := (\mu^2+|y_i|^2)^{-\frac{1}{2}} \quad \forall i=1,2,\cdots,l,
\end{equation}
and $y =[y_1,y_2,\cdots,y_l]^\intercal := W^*x$.
The gradient of function $f_c^\mu(x)$ in \eqref{prob2} is
\begin{equation*}\label{bd101}
\nabla f_c^\mu(x) = c\nabla \psi_\mu(W^*x) + A^\intercal(Ax - b).
\end{equation*}
The Hessian matrix of $\psi_\mu(x)$ is
\begin{equation}\label{nabla2psi}
\nabla^2\psi_{\mu}(W^*x) :=\frac{1}{4}(W \hat{Y} W^* + \bar W \hat{ {Y}} \bar W^* + W \tilde{Y} \bar W^* + \bar W \tilde{\bar Y} W^*),
\end{equation}
where the bar symbol denotes the complex conjugate, 
$\hat Y :=diag\left[\hat{Y}_1,\hat{Y}_2,..., \hat{Y}_l\right]$, 
$\tilde{Y} :=diag\left[\tilde{Y}_1,\tilde{Y}_2,..., \tilde{Y}_l\right]$ and
\begin{equation}\label{bd62}
\hat{Y}_i:={\mu^2}D_i^{3}+D_i, \quad 
\tilde{Y}_i:=-{y_i^2}D_i^{3},\quad i=1,2,...,l,
\end{equation}
Moreover, the Hessian matrix of $f_c^\mu(x)$ is
\begin{equation}\label{eq131}
\nabla^2 f_c^\mu(x) = c\nabla^2 \psi_\mu(W^*x) + A^\intercal A.
\end{equation}

\section{Primal-dual formulation and optimality conditions}\label{section:optcond}
In \cite{ctnewtonold} the authors solved iTV problems for square and full-rank matrices $A$ which were inexpensively diagonalizable, i.e. image deblurring or denoising.  
More precisely, in the previous cited paper the authors tackled iTV problems using a Newton-CG method to solve problem \eqref{prob2}. 
They observed that close to the points of non-smoothness of the $\ell_1$-norm, the smooth pseudo-Huber function \eqref{bd6}
exhibited an ill-conditioning behaviour. This results in two major drawbacks of the application of Newton-CG. First, the linear algebra is challenging. Second,
the region of convergence of Newton-CG is substantially shrunk. To deal with these problems they proposed to 
incorporate Newton-CG inside a continuation procedure on the parameters $c$ and $\mu$. Although they showed that continuation did improve the global
convergence properties of Newton-CG it was later discussed in \cite{ctpdnewton} (for the same iTV problems) that continuation was difficult to control (especially for small $\mu$) and Newton-CG was not always convergent in reasonable CPU time. 

In \cite{ctpdnewton}, Chan, Golub and Mulet 
provided numerical evidence that the behaviour of a Newton-CG method can be made significantly more robust even for small values of $\mu$. This is achieved by simply solving a primal-dual reformulation of \eqref{prob2},
which is given below
\begin{equation}\label{primaldualform}
\mbox{minimize} \  \sup_{g\in\mathbb{C}^l, \|g\|_\infty \le 1} c Re(\bar{g}^*W^*)x + c \sum_{i=1}^l \left(\mu (1 - |g_i|^2)^{1/2} - \mu\right) + \frac{1}{2}\|Ax-b\|^2_2.
\end{equation}
The reason that Newton-CG method is more robust when applied on problem \eqref{primaldualform} than on problem \eqref{prob2} is hidden in the linearization of the optimality conditions 
of the two problems.

\subsection{Optimality conditions}
The optimality conditions of problem \eqref{prob2} are 
\begin{equation}\label{optcondprob2}
\nabla \psi_\mu(W^*x) + A^\intercal (Ax-b) = cRe(W D W^*)x +A^\intercal (Ax-b) = 0.
\end{equation}
The first-order optimality conditions of the primal-dual problem \eqref{primaldualform} are 
\begin{equation}\label{optcondprob2_2}
\begin{aligned}
c Re(W\bar{g}) +A^\intercal (Ax-b) = 0, \\ 
D^{-1}\bar{g} = W^* x.
\end{aligned}
\end{equation}
Notice for conditions \eqref{optcondprob2_2} that the constraint $\|g\|_\infty \le 1$ in \eqref{primaldualform} is redundant since any $x$ and $g$ that satisfy \eqref{optcondprob2_2} 
also satisfy this constraint. Hence, the constraint has been dropped. Conditions \eqref{optcondprob2_2} are obtained from \eqref{optcondprob2} by simply setting $\bar{g} = DW^*x$. Hence, their only difference is the inversion 
of matrix $D$. However, this small difference affects crucially the performance of Newton-CG.

The reason behind this is that the linearization of the second equation in \eqref{optcondprob2_2}, i.e. $D^{-1}\bar{g} = W^* x$, is of much better quality than the linearization of $\nabla \psi_\mu(W^*x)$ for $\mu \approx 0$
and $W^* x \approx 0$. To see why this is true, observe
that for small $\mu$ and $W^* x\approx 0$, the gradient $\nabla \psi_\mu(W^*x)$ becomes close to singular and its linearization is expected to be inaccurate. On the other hand, $D^{-1}\bar{g} = W^* x$
as a function of $W^*x$ is not singular for $\mu\approx 0$ and $W^*x \approx 0$, hence, its linearization is expected to be more accurate. 
We refer the reader to Section $3$ of \cite{ctpdnewton} for empirical justification.

\section{Primal-dual Newton conjugate gradients method}\label{sec:pdNCG}
In this section we present details of pdNCG method \cite{ctpdnewton}.

\subsection{The method}

First, we convert optimality conditions \eqref{optcondprob2_2} to the real case. 
This is done by splitting matrix $W = ReW + \sqrt{-1}Im W$ and the dual variables $g = g_{re} + \sqrt{-1} g_{im}$ into their real and imaginary parts. 
We do this in order to obtain optimality conditions which are differentiable in the classical sense of real analysis. 
This allows a straightforward application of pdNCG method \cite{ctpdnewton}. The real optimality conditions of the primal-dual problem \eqref{primaldualform}
are given below
\begin{equation}\label{bd53}
\begin{aligned}
c(ReWg_{re}+ImWg_{im}) +A^\intercal (Ax-b) = 0, \\ 
D^{-1}g_{re} = ReW^\intercal x, \quad D^{-1}g_{im} = ImW^\intercal x.
\end{aligned}
\end{equation}

At every iteration of pdNCG the primal-dual directions are calculated by approximate solving the following linearization of the equality constraints in \eqref{bd53} 
\begin{equation}\label{bd56}
\begin{aligned}
B\Delta x & =  -\nabla f_c^\mu(x) \\ 
\Delta g_{re}   & = D(I-B_1)ReW^\intercal \Delta x + DB_2ImW^\intercal \Delta x- g_{re} + DReW^\intercal x\\
\Delta g_{im}   & = D(I-B_4)ImW^\intercal \Delta x + DB_3ReW^\intercal \Delta x- g_{im} + DImW^\intercal x
\end{aligned}
\end{equation}
where 
\begin{equation}\label{bd55}
B := c\tilde{B} + A^\intercal A,
\end{equation} 
\begin{align*}
\tilde{B}  & := ReWD(I-B_1)ReW^\intercal + ImWD(I-B_4)ImW^\intercal + ReW DB_2 ImW^\intercal  \\ 
              &  \quad \ + ImW B_3D ReW^\intercal,
\end{align*}
and $B_i, i=1,2,3,4$ are diagonal matrices with components
\begin{eqnarray*}
&[B_1]_{ii} := D_{i}[g_{re}]_iReW_i^\intercal x,&  \quad [B_2]_{ii} := D_{i}[g_{re}]_iImW_i^\intercal x, \\
&[B_3]_{ii} := D_{i}[g_{im}]_iReW_i^\intercal x,& \quad  [B_4]_{ii} := D_{i}[g_{im}]_iImW_i^\intercal x.
\end{eqnarray*}
\begin{remark}\label{rem:1}
Matrix $B$ in \eqref{bd55} is positive definite (and invertible) if $\|g_{re} + \sqrt{-1}g_{im}\|_\infty \le 1$ and condition \eqref{bd54} are satisfied.
The former condition will be maintained through all iterations of pdNCG.
\end{remark}

It is straightforward to show the claim in Remark \ref{rem:1} for the case of $W$ being a real matrix. For the case of complex $W$ we refer
the reader to a similar claim which is made in \cite{ctpdnewton}, page $1970$.
Although matrix $B$ is positive definite under the conditions stated in Remark \ref{rem:1}, it is not symmetric, except in the case that $W$ is real where all imaginary parts are dropped.
Therefore in the case 
of complex matrix $W$, preconditioned CG (PCG) cannot be employed to approximately solve \eqref{bd56}.
To avoid the problem of non-symmetric matrix $B$ the authors in \cite{ctpdnewton} suggested to ignore the non-symmetric part in matrix $B$ and employ
CG to solve \eqref{bd56}. This idea is based on the following remark. 
\begin{remark}\label{rem:2}
The symmetric part of $B$ tends to the symmetric second-order derivative of $f_c^\mu(x)$ as pdNCG converges 
(see Section $5$ in \cite{ctpdnewton}). 
\end{remark}

Hence, system \eqref{bd56} is replaced with 
\begin{equation}\label{eq108}
\begin{aligned}
\hat{B}\Delta x  &  =  -\nabla f_c^\mu(x) \\
\Delta g_{re}   & = D(I-B_1)ReW^\intercal \Delta x + DB_2ImW^\intercal \Delta x- g_{re} + DReW^\intercal x\\
\Delta g_{im}   & = D(I-B_4)ImW^\intercal \Delta x + DB_3ReW^\intercal \Delta x- g_{im} + DImW^\intercal x
\end{aligned}
\end{equation}
where
\begin{equation}\label{eq109}
\hat{B} := c\,\mbox{sym}(\tilde{B}) + A^\intercal A
\end{equation} 
and $\mbox{sym}(\tilde{B}):= 1/2(\tilde{B} + \tilde{B}^\intercal)$ is the symmetric part of $\tilde{B}$. Moreover, PCG is terminated when 
\begin{equation}\label{bd59}
\|\hat{B}\Delta x + \nabla f_c^\mu(x)\|_2 \le \eta \|\nabla f_c^\mu(x)\|_2,
\end{equation}
is satisfied for $\eta \in [0,1)$. 
Then the iterate $g=g_{re} + \Delta g_{re} + \sqrt{-1}(g_{im} + \Delta g_{im})$ is orthogonally projected on the box $\{x:\left\|x\right\|_\infty\leq 1\}$.
The projection operator for complex arguments is applied component-wise and it is defined as
$
v :=  P_{\|\cdot\|_\infty\le1}(u) = \mbox{min}({1}/{|u|},1)\odot u
$, where $\odot$ denotes the component-wise multiplication.
In the last step, line-search is employed for the primal $\Delta x$ direction in order to guarantee that the objective value $f_c^\mu(x)$ is monotonically decreasing, see Section $5$ of \cite{ctpdnewton}.
The pseudo-code of pdNCG is presented in Figure \ref{fig:2}.

\begin{figure}
\begin{algorithmic}[1]
\vspace{0.1cm}
\STATE \textbf{Input:} $\tau_1\in(0,1)$, $\tau_2\in(0,1/2)$, $x^0$, $g_{re}^0$ and $g_{im}^0$, where $\|g_{re}^0 + \sqrt{-1}g_{im}^0\|_\infty \le 1$.
\STATE \textbf{Loop:} For $k=1,2,...$, until termination criteria are met. \vspace{0.1cm}
\STATE \hspace{0.5cm} Calculate $\Delta x^k$, $\Delta g_{re}^k$ and $\Delta g_{im}^k$ by solving approximately the system \eqref{eq108}, \\ \vspace{0.1cm}
              \hspace{0.5cm} until \eqref{bd59} is satisfied for some $\eta\in[0,1)$. \\ \vspace{0.1cm}
\STATE \hspace{0.5cm} Set 
$
\tilde{g}_{re}^{k+1}  :=  g_{re}^k + \Delta g_{re}^k 
$
,
$
\tilde{g}_{im}^{k+1}   :=  g_{im}^k + \Delta g_{im}^k
$
and calculate
\begin{equation*}
\bar{g}^{k+1}  :=  P_{\|\cdot\|_\infty\le1}(\tilde{g}_{re}^{k+1}  + \sqrt{-1}\tilde{g}_{im}^{k+1} ),
\end{equation*}
\hspace{0.5cm} where $P_{\|\cdot\|_\infty\le1}(\cdot)$ is the orthogonal projection on the $\ell_\infty$ ball. \\ \vspace{0.1cm} 
\hspace{0.5cm} Then set
${g}_{re}^{k+1} := Re \bar{g}^{k+1} $ and $\quad {g}_{im}^{k+1}:=Im\bar{g}^{k+1} $.
\vspace{0.1cm}
\STATE \hspace{0.5cm} Find the least integer $j\ge0$ such that
\begin{equation*}
f_c^\mu(x^k + \tau_1^j \Delta x^k) \le f_c^\mu(x^k) + \tau_2\tau_1^j (\nabla f_c^\mu(x^k))^\intercal \Delta x^k
\end{equation*}
 \hspace{0.5cm} and set $\alpha := \tau_1^j$.
\STATE \hspace{0.5cm}  Set $x^{k+1} := x^k + \alpha \Delta x^k$. 
\end{algorithmic}
\caption{Algorithm primal-dual Newton Conjugate Gradients}
\label{fig:2}
\end{figure}

\section{Preconditioning} \label{sec:cs}

Practical computational efficiency of pdNCG applied to system \eqref{eq108} depends on spectral properties of matrix $\hat{B}$ in \eqref{eq109}. Those can be improved by a suitable preconditioning. In this section we introduce a new preconditioner for $\hat{B}$ and discuss the limiting behaviour of the spectrum of preconditioned $\hat{B}$.

First,  we give an intuitive analysis on the construction of 
the proposed preconditioner.
In Remark \ref{rem:3} it is mentioned that the distance $\omega$ of the two solutions $x_{c}:=\argmin f_c(x)$ and $x_{c,\mu}:=\argmin f_c^\mu (x)$ can be arbitrarily small for sufficiently small values of $\mu$.
Moreover, according to Assumption \ref{assum:1}, $W^*x_c$ is $q$ sparse. Therefore, Remark \ref{rem:3} implies that $W^*x_{c,\mu}$ is approximately $q$ sparse
with nearly zero components of $\mathcal{O}(\omega)$. A consequence of the previous 
statement is that the components of $W^*x_{c,\mu} $ split into the following disjoint sets
\begin{equation*}
\begin{aligned}
&\mathcal{B}  := \{i \in \{1,2,\cdots,l\}\ | \ |W^*_ix_{c,\mu} |\gg \mathcal{O}(\omega)\}, 	&   |\mathcal{B}|=q=|\mbox{supp}(W^*x_c)|,\\
& & \\
&\mathcal{B}^c  := \{i \in \{1,2,\cdots,l\}\ | \ |W^*_ix_{c,\mu} |\approx \mathcal{O}(\omega)\}, &    |\mathcal{B}^c|=l - q.
\end{aligned}
\end{equation*}
The behaviour of $W^*x_{c,\mu}$ has a crucial influence on matrix $\nabla^2 \psi_\mu(W^*x_{c,\mu})$ in \eqref{nabla2psi}.
Notice that the components of the diagonal matrix $D$, defined in \eqref{bd61} as part of $\nabla^2 \psi_\mu(W^*x_{c,\mu})$, split into two disjoint sets. 
In particular, $q$ components are non-zeros much less than $\mathcal{O}({1}/{\omega})$, while the majority, $l-q$, of its components are of $\mathcal{O}({1}/{\omega})$, 
\begin{equation}\label{eq:2}
D_i \ll \mathcal{O}(\frac{1}{\omega}) \quad \forall i\in\mathcal{B}  \quad \mbox{and} \quad D_i = \mathcal{O}(\frac{1}{\omega}) \quad \forall i\in\mathcal{B}^c.
\end{equation}
Hence, for points close to $x_{c,\mu}$ and small $\mu$, matrix $\nabla^2 f_c^\mu(x)$ in \eqref{eq131} consists of a dominant matrix $c\nabla^2 \psi_\mu(x) $ and
of matrix $A^\intercal A$ with moderate largest eigenvalue. The previous argument for $A^\intercal A$ is due to \eqref{bd8}. Observe that $\lambda_{max}(A^\intercal A)=\lambda_{max}(A A^\intercal)$,
hence, if $\delta$ in \eqref{bd8} is not a very large constant, then $\lambda_{max}(A^\intercal A) \le 1+\delta $.
According to Remark \ref{rem:2}, the symmetric matrix $\mbox{sym}(\tilde{B})$ in \eqref{eq131} tends to matrix $\nabla^2 \psi_\mu(x)$ as $x \to x_{c,\mu}$. 
Therefore, matrix $\mbox{sym}(\tilde{B})$ is the dominant matrix in $\hat{B}$.
For this reason, in the proposed preconditioning technique, matrix $A^\intercal A$ in \eqref{eq131} is replaced by a scaled identity $\rho I_n$, $\rho>0$, while 
the dominant matrix $\mbox{sym}(\tilde{B})$ is maintained.
Based on these observations we propose the following preconditioner
\begin{equation}\label{bd10}
\tilde{N} := c\, \mbox{sym}(\tilde{B}) + \rho I_n.
\end{equation}

In order to capture the approximate separability of the diagonal components of matrix $D$ 
for points close to $x_{c,\mu}$, when $\mu$ is sufficiently small,
we will work with approximate guess of $\mathcal{B}$ and $\mathcal{B}^c$. 
For this reason, we introduce the positive constant $\nu$,
such that
$$
\# (D_i < \nu) = \sigma.
$$
Here $\sigma$ might be different from the sparsity of $W^*x_c$. Furthermore, according to the above definition we have the sets
\begin{equation}\label{eq:1}
\mathcal{B}_\nu := \{i \in \{1,2,\cdots,l\}\ | \ D_i < \nu \} \quad \mbox{and} \quad \mathcal{B}_\nu^c :=  \{1,2,\cdots,l\} \backslash \mathcal{B}_\nu,
\end{equation}
with $|\mathcal{B}_\nu|=\sigma$ and $|\mathcal{B}_\nu^c|=l-\sigma$. 
This notation is being used in the following theorem, in which
we analyze the behaviour of the spectral properties of preconditioned $\nabla^2 f_c^{\mu}(x)$, with preconditioner
$N:= c\nabla^2 \psi_\mu(W^*x) + \rho I_n$.
However, according to Remark \ref{rem:2} matrices $\hat{B}$ and $\tilde{N}$ tend to $\nabla^2 f_c^\mu(x)$ and $N$, respectively, as $x\to x_{c,\mu}$.
Therefore, the following theorem is useful for
the analysis of the limiting behaviour of the spectrum of preconditioned $\hat{B}$.
\begin{theorem}\label{thm:3}
Let $\nu$ be any positive constant and $\#(D_i < \nu) = \sigma$ at a point $x$, where $D$ is defined in \eqref{bd61}. 
Let 
$$
\nabla^2 f_c^\mu(x) = c\nabla^2 \psi_\mu(W^*x) + A^\intercal A \quad \mbox{and} \quad N:= c\nabla^2 \psi_\mu(W^*x) + \rho I_n.
$$
Additionally, let $A$ and $W$ satisfy W-RIP
with some constant $\delta_{\sigma} < 1/2$ and let $A$ satisfy \eqref{bd8} for some constant $\delta\ge0$.\\
If the eigenvectors of $N^{-\frac{1}{2}}\nabla^2 f_c^\mu(x) N^{-\frac{1}{2}} $ do not belong in $\mbox{Ker}(W^*_{\mathcal{B}_\nu^c})$ and $\rho\in[\delta_{\sigma},1/2]$, then the eigenvalues of $N^{-1}\nabla^2 f_c^\mu(x)$ satisfy
$$
|\lambda -1| \le \frac{1}{2}\frac{{\chi + 1} + (5\chi^2 - 2\chi + 1)^{\frac{1}{2}}}{ c\mu^2\nu^3\lambda_{min}(\mbox{Re}(W_{\mathcal{B}_\nu^c}W^*_{\mathcal{B}_\nu^c}))  + \rho},
$$
where $\lambda\in\mbox{spec}(N^{-1}\nabla^2 f_c^\mu(x) )$, $\lambda_{min}(\mbox{Re}(W_{\mathcal{B}_\nu^c}W^*_{\mathcal{B}_\nu^c}))$ is the minimum nonzero eigenvalue of $\mbox{Re}(W_{\mathcal{B}_\nu^c}W^*_{\mathcal{B}_\nu^c})$
and  $\chi:= 1+\delta - \rho$. \\
If the eigenvectors of $N^{-\frac{1}{2}}\nabla^2 f_c^\mu(x) N^{-\frac{1}{2}} $ belong in $\mbox{Ker}(W^*_{\mathcal{B}_\nu^c})$, then
$$
|\lambda -1| \le \frac{1}{2}\frac{{\chi + 1} + (5\chi^2 - 2\chi + 1)^{\frac{1}{2}}}{\rho}.
$$
\end{theorem}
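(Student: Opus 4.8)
The plan is to symmetrise the generalized eigenvalue problem, isolate the difference $\nabla^2 f_c^\mu(x)-N=A^\intercal A-\rho I_n$ as a perturbation measured in the $N$-geometry, and then estimate separately a ``numerator'' (how large that perturbation can be) and a ``denominator'' (a lower bound on $N$). First, the eigenvalues of $N^{-1}\nabla^2 f_c^\mu(x)$ coincide with those of the real symmetric matrix $M:=N^{-\frac{1}{2}}\nabla^2 f_c^\mu(x)N^{-\frac{1}{2}}$, and since $\nabla^2 f_c^\mu(x)=N+(A^\intercal A-\rho I_n)$ we have $M-I_n=N^{-\frac{1}{2}}(A^\intercal A-\rho I_n)N^{-\frac{1}{2}}$. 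For a unit eigenvector $v$ of $M$ with eigenvalue $\lambda$, put $u:=N^{-\frac{1}{2}}v$, so that $u^\intercal N u=1$ and
$$\lambda-1=u^\intercal(A^\intercal A-\rho I_n)u=\|Au\|_2^2-\rho\|u\|_2^2 .$$
It then suffices to bound $\big|\,\|Au\|_2^2-\rho\|u\|_2^2\,\big|$ above by (numerator)$\,\cdot\|u\|_2^2$ and $\|u\|_2^2$ above by $1/$(denominator).

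For the numerator I would decompose $u=u_1+u_2$, with $u_1$ the orthogonal projection of $u$ onto $\mbox{span}\{W_i:i\in\mathcal{B}_\nu\}$ and $u_2=u-u_1\perp u_1$. Every vector in that span equals $Wz$ for some $z\in E^l$ supported on $\mathcal{B}_\nu$, hence at most $\sigma$-sparse, so W-RIP (Definition \ref{def:1}) together with $\delta_\sigma\le\rho$ gives $\|Au_1\|_2^2\le(1+\delta_\sigma)\|u_1\|_2^2\le(1+\rho)\|u_1\|_2^2$, while \eqref{bd8}, i.e. $\lambda_{max}(A^\intercal A)=\lambda_{max}(AA^\intercal)\le 1+\delta$, gives $\|Au_2\|_2^2\le(1+\delta)\|u_2\|_2^2$. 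Since $u_1\perp u_2$, the cross term is $u_1^\intercal A^\intercal A u_2=u_1^\intercal(A^\intercal A-\rho I_n)u_2$, which is bounded by $\|A^\intercal A-\rho I_n\|_2\,\|u_1\|_2\|u_2\|_2=\chi\|u_1\|_2\|u_2\|_2$ (the spectral norm equals $\chi$ because $\rho\le 1/2$ forces $\chi\ge\rho$). Subtracting $\rho\|u\|_2^2=\rho\|u_1\|_2^2+\rho\|u_2\|_2^2$ turns $\lambda-1$ into a quadratic form in $(\|u_1\|_2,\|u_2\|_2)$ whose matrix has diagonal entries $1,\chi$ and off-diagonal entry $\chi$; its largest eigenvalue is $\frac{1}{2}\big(\chi+1+(5\chi^2-2\chi+1)^{\frac{1}{2}}\big)$, and the opposite estimate $\lambda-1\ge-\rho\|u\|_2^2$ is weaker since $\rho\le\frac{1}{2}(1+\chi)$. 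Hence $|\lambda-1|\le\frac{1}{2}\big(\chi+1+(5\chi^2-2\chi+1)^{\frac{1}{2}}\big)\|u\|_2^2$.

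For the denominator I would prove the positive semidefinite estimate
$$N=c\nabla^2\psi_\mu(W^*x)+\rho I_n\ \succeq\ c\mu^2\nu^3\,\mbox{Re}(W_{\mathcal{B}_\nu^c}W^*_{\mathcal{B}_\nu^c})+\rho I_n ,$$
which follows from \eqref{nabla2psi}--\eqref{bd62}: regrouping $\hat Y_i$ and $\tilde Y_i$ through $(\mu^2+|y_i|^2)D_i^2=1$ shows that the $i$-th summand of $\nabla^2\psi_\mu(W^*x)$ dominates $\mu^2 D_i^3\,\mbox{Re}(W_iW_i^*)$ (the indefinite-looking piece being controlled by Cauchy--Schwarz in the seminorm induced by $\mbox{Re}(W_iW_i^*)$, with $x^\intercal\mbox{Re}(W_iW_i^*)x=|y_i|^2$), after which dropping the indices in $\mathcal{B}_\nu$ and using $D_i\ge\nu$ for $i\in\mathcal{B}_\nu^c$ (from \eqref{eq:1}, \eqref{bd61}) yields the bound. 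Then I would split into the two cases. If the eigenvector $v$, equivalently $u=N^{-\frac{1}{2}}v$, has no component in $\mbox{Ker}(W^*_{\mathcal{B}_\nu^c})=\mbox{Ker}(\mbox{Re}(W_{\mathcal{B}_\nu^c}W^*_{\mathcal{B}_\nu^c}))$, then $u^\intercal\mbox{Re}(W_{\mathcal{B}_\nu^c}W^*_{\mathcal{B}_\nu^c})u\ge\lambda_{min}(\mbox{Re}(W_{\mathcal{B}_\nu^c}W^*_{\mathcal{B}_\nu^c}))\|u\|_2^2$ ($\lambda_{min}$ the smallest nonzero eigenvalue), so $1=u^\intercal N u\ge\big(c\mu^2\nu^3\lambda_{min}(\mbox{Re}(W_{\mathcal{B}_\nu^c}W^*_{\mathcal{B}_\nu^c}))+\rho\big)\|u\|_2^2$, and combined with the numerator bound this gives the first inequality; otherwise only $1=u^\intercal N u\ge\rho\|u\|_2^2$ is available, i.e. $\|u\|_2^2\le1/\rho$, giving the second. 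The transfer from $N,\nabla^2 f_c^\mu(x)$ to $\tilde N,\hat B$ is then supplied by Remark \ref{rem:2}.

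The hardest part is the positive semidefinite estimate of $\nabla^2\psi_\mu(W^*x)$ in the genuinely complex case, where \eqref{nabla2psi} mixes $W$ with $\bar W$ and carries the negative diagonal $\tilde Y$: one must prove the termwise lower bound carefully and identify $\mbox{Ker}(\mbox{Re}(W_{\mathcal{B}_\nu^c}W^*_{\mathcal{B}_\nu^c}))$ with $\mbox{Ker}(W^*_{\mathcal{B}_\nu^c})$ so that the two stated cases are exhaustive. A secondary, purely computational obstacle is the $2\times2$ eigenvalue evaluation in the numerator step, and checking that the orthogonal splitting of $u$ remains compatible with the normalisation $u^\intercal N u=1$ used for the denominator.
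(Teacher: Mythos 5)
Your proposal follows essentially the same route as the paper's proof: the same symmetrization and Rayleigh-quotient identity $\lambda-1=u^\intercal(A^\intercal A-\rho I_n)u/(u^\intercal N u)$, the same orthogonal splitting of $u$ along the column space of $W_{\mathcal{B}_\nu}$ with W-RIP and \eqref{bd8} controlling the two diagonal blocks and the cross term, the same quadratic-form maximum $\frac{1}{2}\bigl(\chi+1+(5\chi^2-2\chi+1)^{1/2}\bigr)$, and the same positive-semidefinite lower bound $N\succeq c\mu^2\nu^3\,\mbox{Re}(W_{\mathcal{B}_\nu^c}W^*_{\mathcal{B}_\nu^c})+\rho I_n$ extracted from the decomposition \eqref{eq120} of $\nabla^2\psi_\mu$, followed by the same case split on $\mbox{Ker}(W^*_{\mathcal{B}_\nu^c})$. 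The only deviations are cosmetic (normalizing $u^\intercal N u=1$ instead of $\|u\|_2=1$, and reading the maximum off as the top eigenvalue of the $2\times2$ form rather than via the paper's four candidate points), so the proposal is correct and matches the paper's argument.
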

\begin{proof}
We analyze the spectrum of matrix $N^{-\frac{1}{2}}\nabla^2 f_c^\mu(x) N^{-\frac{1}{2}}$ instead, because it has the same eigenvalues
as matrix $N^{-1} \nabla^2 f_c^\mu(x)$. We have that
\begin{align*}
N^{-\frac{1}{2}}\nabla^2 f_c^\mu(x) N^{-\frac{1}{2}} & =  N^{-\frac{1}{2}}(c\nabla^2\psi_{\mu}(x) + A^\intercal A ) N^{-\frac{1}{2}} \\  
                                                                     & =  N^{-\frac{1}{2}}(c\nabla^2\psi_{\mu}(x) + A^\intercal A  + \rho I_n - \rho I_n) N^{-\frac{1}{2}} \\
                                                                     & =  N^{-\frac{1}{2}}(c\nabla^2\psi_{\mu}(x) + \rho I_n ) N^{-\frac{1}{2}} +  N^{-\frac{1}{2}} A^\intercal A N^{-\frac{1}{2}} - \rho N^{-1}\\
                                                                     & = I_n  + N^{-\frac{1}{2}} A^\intercal A N^{-\frac{1}{2}} - \rho N^{-1}
\end{align*}
Let $u$ be an eigenvector of $N^{-\frac{1}{2}}\nabla^2 f_c^\mu(x) N^{-\frac{1}{2}} $ with $\|u\|_2=1$ and $\lambda$ the corresponding eigenvalue, then 
\begin{align}\label{eq:501}
(I_n + N^{-\frac{1}{2}} A^\intercal A N^{-\frac{1}{2}} - \rho N^{-1})u & = \lambda u& \Longleftrightarrow \nonumber \\
(N + N^{\frac{1}{2}} A^\intercal A N^{-\frac{1}{2}} - \rho I_n )u          & = \lambda N u& \Longrightarrow \nonumber \\
u^\intercal N^{\frac{1}{2}} (A^\intercal A - \rho I_n  )N^{-\frac{1}{2}}  u          & = (\lambda -1)u^\intercal N u& \Longrightarrow \nonumber \\
|u^\intercal N^{\frac{1}{2}} (A^\intercal A   - \rho I_n  )N^{-\frac{1}{2}}u|          & = |\lambda -1|u^\intercal N u.&
\end{align}
First, we find an upper bound for $|u^\intercal N^{\frac{1}{2}} (A^\intercal A   - \rho I_n  )N^{-\frac{1}{2}}u|$. Matrices $N^{\frac{1}{2}} (A^\intercal A   - \rho I_n  )N^{-\frac{1}{2}}$
and $A^\intercal A   - \rho I_n$ have the same eigenvalues. Therefore, 
\begin{align*}
|u^\intercal N^{\frac{1}{2}} (A^\intercal A   - \rho I_n  )N^{-\frac{1}{2}}u| \le \lambda_{max}^{+}(A^\intercal A - \rho I_n)
\end{align*}
where $\lambda_{max}^{+}(\cdot)$ is the largest eigenvalue of the input matrix in absolute value. Thus,
\begin{align*}
|u^\intercal N^{\frac{1}{2}} (A^\intercal A   - \rho I_n  )N^{-\frac{1}{2}}u|  &\le \mymax_{\|v\|_2^2\le 1} |v^\intercal(A^\intercal A - \rho I_n) v | \\
          												  & =  \mymax_{\|Pv\|_2^2 + \|Qv\|_2^2\le 1} |(Pv + Qv)^\intercal(A^\intercal A - \rho I_n) (Pv + Qv) |, 
\end{align*}
where $P$ is the projection matrix to the column space of $W_{\mathcal{B}_\nu}$ and $Q=I_n - P$. Using triangular inequality we get
\begin{align*}
|u^\intercal N^{\frac{1}{2}} (A^\intercal A   - \rho I_n  )N^{-\frac{1}{2}}u|  & \le  \mymax_{\|Pv\|_2^2 + \|Qv\|_2^2\le 1} \big(|(Pv)^\intercal (A^\intercal A - \rho I_n)Pv|  \nonumber \\
													   & + |(Qv)^\intercal (A^\intercal A - \rho I_n)Qv| 
 													      + 2 |(Pv)^\intercal (A^\intercal A - \rho I_n)Qv|\big).
\end{align*}
Let us denote by $\hat v$ the solution of this maximization problem and set $\|P \hat v\|_2^2=\alpha$ and $\|Q \hat v\|_2^2=1-\alpha$, where $\alpha \in [0,1]$, then
\begin{align}\label{eq:152}
|u^\intercal N^{\frac{1}{2}} (A^\intercal A   - \rho I_n  )N^{-\frac{1}{2}}u|  & \le \big(|(P \hat v)^\intercal (A^\intercal A - \rho I_n)P \hat v|  \nonumber \\
													   & + |(Q \hat v)^\intercal (A^\intercal A - \rho I_n)Q \hat v| 
 													      + 2 |(P \hat v)^\intercal (A^\intercal A - \rho I_n)Q \hat v|\big).
\end{align}
Since $P \hat v$ belongs to the column space of $W_{\mathcal{B}_\nu}$ and $|\mathcal{B}_\nu|=\sigma$, from W-RIP with $\delta_\sigma < 1/2$ we have that
\begin{align*}
\|P\hat v\|_2^2 (1-\delta_\sigma) & \le \|AP \hat v\|_2^2 & \Longrightarrow \\
\|P\hat v\|_2^2 (1-\rho) &\le \|AP\hat v\|_2^2 & \Longleftrightarrow \\
\|P\hat v\|_2^2 (1-2\rho) &\le \|AP\hat v\|_2^2 - \rho \|P\hat v\|_2^2.  & 
\end{align*}
Since $\rho\in [\delta_\sigma, 1/2]$ we have that 
$
\rho \|P\hat v\|_2^2 \le \|AP\hat v\|_2^2,
$
which implies that if the eigenvector corresponding to an eigenvalue of matrix $A^\intercal A$ belongs to the column space of $W_{\mathcal{B}_\nu}$, then
the eigenvalue cannot be smaller than $\rho$.
Hence,
$$
 |(P\hat v)^\intercal (A^\intercal A - \rho I_n)P\hat v| \le |(P\hat v)^* (A^\intercal A - \rho I_n)P\hat v| = (P\hat v)^* (A^\intercal A - \rho I_n)P\hat v.
$$
 Moreover, from W-RIP with $\delta_\sigma<1/2$ and $\rho\in [\delta_\sigma, 1/2]$, we also have that $(P\hat v)^* (A^\intercal A - \rho I_n)P\hat v \le \|P\hat v\|_2^2$. 
 Thus,
  \begin{equation}\label{eq:1000}
 |(P\hat v)^\intercal (A^\intercal A - \rho I_n)P\hat v| \le \alpha.
 \end{equation}
 From property \eqref{bd8} and $\lambda_{max}(A^\intercal A)=\lambda_{max}(A A^\intercal)$, we have that 
 $\lambda_{max}(A^\intercal A - \rho I_n) \le 1+\delta - \rho$. Finally, using the Cauchy-Schwarz inequality, we get that
 \begin{equation}\label{eq:1001}
|(Q\hat v)^\intercal (A^\intercal A - \rho I_n)Q\hat v| \le (1+\delta -\rho)(1-\alpha)
\end{equation}
and
 \begin{equation}\label{eq:1002}
 |(P \hat v)^\intercal (A^\intercal A - \rho I_n)Q\hat v|\le (1+\delta - \rho)\sqrt{\alpha(1-\alpha)}.
 \end{equation}
Using \eqref{eq:1000}, \eqref{eq:1001} and \eqref{eq:1002} in \eqref{eq:152} we have that 
\begin{align} \label{eq:153}
|u^\intercal N^{\frac{1}{2}} (A^\intercal A   - \rho I_n  )N^{-\frac{1}{2}}u|  
													   & \le \alpha + (1+\delta - \rho)(1-\alpha) + 2(1+\delta - \rho)\sqrt{\alpha(1-\alpha)}. 
\end{align}
Set 
$\chi:= 1+\delta - \rho$,
it is easy to check that in the interval $\alpha \in [0,1]$ the right hand side of \eqref{eq:153} has a maximum at one of the four candidate points
$$
\alpha_1 = 0, \quad \alpha_2 = 1, \quad \alpha_{3,4} = \frac{1}{2}(1 \pm \left(\frac{(\chi - 1)^2}{5\chi^2 - 2\chi + 1}\right)^{1/2}),
$$
where $\alpha_3$ is for plus and $\alpha_4$ is for minus. The corresponding function values are
$$
\chi, \quad 1, \quad \frac{\chi + 1}{2} + \frac{1}{2}\frac{3\chi^2 + 2\chi -1}{(5\chi^2 - 2\chi + 1)^{{1}/{2}}}, \quad  \frac{\chi + 1}{2} + \frac{1}{2}(5\chi^2 - 2\chi +1 )^{{1}/{2}},
$$
respectively.
Hence, the maximum among these four values is given for $\alpha_4$.
Thus, \eqref{eq:153} is upper bounded by
\begin{equation}\label{eq:154}
|u^\intercal N^{\frac{1}{2}} (A^\intercal A   - \rho I_n  )N^{-\frac{1}{2}}u|  \le \frac{\chi + 1}{2} + \frac{1}{2}(5\chi^2 - 2\chi + 1)^{\frac{1}{2}}.
\end{equation}
We now find a lower bound for $u^\intercal N u$. 
Using the definition of $D$ in \eqref{bd61}, matrix $\hat Y$ in \eqref{bd62} is rewritten as
$
\hat Y_i = (2\mu^2 + |y_i|^2) D_{i}^3\   \forall i=1,2,\cdots,l.
$
Thus $\nabla^2 \psi_\mu (x)$ in \eqref{nabla2psi} is rewritten as
\begin{align}\label{eq120}
\nabla^2\psi_{\mu}(W^*x) & =\frac{1}{4}[(W \tilde{D}^3 W^* + \bar W \tilde{D}^3 \bar W^* + W \tilde{Y} \bar W^* + \bar W \tilde{\bar Y} W^*) \\ 
                                         & + 2\mu^2 (W D^3 W^* + \bar W D^3 \bar W^*)],\nonumber
\end{align}
where $\tilde{D}_{i} =|y_i|^2 D_{i}^3$ $\forall i=1,2,\cdots,l$.
Observe, that matrix $\nabla^2\psi_{\mu}(W^*x)$ consists of two matrices $W \tilde{D}^3 W^* + \bar W \tilde{D}^3 \bar W^* + W \tilde{Y} \bar W^* + \bar W \tilde{\bar Y} W^*$ and 
$2\mu^2 (W D^3 W^* + \bar W D^3 \bar W^*)$ which are positive semi-definite. 
Using \eqref{eq120} and the previous statement 
we get that 
\begin{align*}
u^\intercal N u & = u^\intercal (c\nabla^2\psi_\mu(W^*x) + \rho I_n) u \\
                        & = \frac{c}{4}u^\intercal(W \hat{Y} W^* + \bar W \hat{ {Y}} \bar W^* + W \tilde{Y} \bar W^* + \bar W \tilde{\bar Y} W^* )u +\rho\\
                        & \ge \frac{c\mu^2}{2}u^\intercal (WD^3W^* + \bar{W}D^3\bar{W}^*)u +\rho.
\end{align*}
Furthermore, using the splitting of matrix $D$ \eqref{eq:1}, the last inequality is equivalent to
\begin{align*}
u^\intercal N u & = \frac{c\mu^2}{2}u^\intercal (W_{\mathcal{B}_\nu}D^3_{\mathcal{B}_\nu} W^*_{\mathcal{B}_\nu} + W_{\mathcal{B}_\nu^c}D^3_{\mathcal{B}_\nu^c} W^*_{\mathcal{B}_\nu^c} + 
                        \bar{W}_{\mathcal{B}_\nu}D^3_{\mathcal{B}_\nu}\bar{W}^*_{\mathcal{B}_\nu} + \bar{W}_{\mathcal{B}_\nu^c}D^3_{\mathcal{B}_\nu^c}\bar{W}^*_{\mathcal{B}_\nu^c})u   +\rho  \\
                        & \ge \frac{c\mu^2}{2}u^\intercal(W_{\mathcal{B}_\nu^c}D^3_{\mathcal{B}_\nu^c} W^*_{\mathcal{B}_\nu^c}  + \bar{W}_{\mathcal{B}_\nu^c}D^3_{\mathcal{B}_\nu^c}\bar{W}^*_{\mathcal{B}_\nu^c})u  + \rho. 
\end{align*}
Using the defition of $\mathcal{B}_\nu^c$ \eqref{eq:1} in the last inequality, the quantity $u^\intercal N u $ is further lower bounded by
\begin{equation}\label{eq:150}
                     u^\intercal N u    \ge \frac{c\mu^2\nu^3}{2}u^\intercal(W_{\mathcal{B}_\nu^c}W^*_{\mathcal{B}_\nu^c}  + \bar{W}_{\mathcal{B}_\nu^c}\bar{W}^*_{\mathcal{B}_\nu^c})u+\rho.
\end{equation}
If $u\notin \mbox{Ker}(W^*_{\mathcal{B}_\nu^c})$, then from \eqref{eq:150} we get
\begin{equation} \label{eq:151}
u^\intercal N u \ge  c\mu^2\nu^3\lambda_{min}(\mbox{Re}(W_{\mathcal{B}_\nu^c}W^*_{\mathcal{B}_\nu^c}))  + \rho. 
\end{equation}
Hence, combining \eqref{eq:501}, \eqref{eq:154} and \eqref{eq:151} we conclude that 
\begin{align*}
|\lambda-1|  \le \frac{1}{2}\frac{{\chi + 1} + (5\chi^2 - 2\chi + 1)^{\frac{1}{2}}}{ c\mu^2\nu^3\lambda_{min}(\mbox{Re}(W_{\mathcal{B}_\nu^c}W^*_{\mathcal{B}_\nu^c}))  + \rho} 
\end{align*}
If $u\in \mbox{Ker}(W^*_{\mathcal{B}_\nu^c})$, then from \eqref{eq:150} we have that $u^\intercal N u \ge \rho$, hence 
$$
|\lambda -1| \le \frac{1}{2}\frac{{\chi + 1} + (5\chi^2 - 2\chi + 1)^{\frac{1}{2}}}{\rho}.
$$
\end{proof}

Let us now draw some conclusions from Theorem \ref{thm:3}. In order for the eigenvalues of $N^{-1}\nabla^2 f_c^\mu(x)$ to be around one, it is required that the degree of freedom $\nu$ is chosen such that $\nu = \mathcal{O}(1/\mu)$
and $\mu$ is small.
For such $\nu$, the cardinality $\sigma$ of the set $\mathcal{B}_\nu$ must be small enough such that 
matrices $A$ and $W$ satisfy W-RIP with constant $\delta_{\sigma}< 1/2$; otherwise the assumptions of Theorem \ref{thm:3} will not be satisfied. 
This is possible if the pdNCG iterates are close to the optimal solution $x_{c,\mu}$ and $\mu$ is sufficiently small. In particular, for  sufficiently small $\mu$, 
from Remark \ref{rem:3} we have that $x_{c,\mu} \approx x_c$ and $\sigma \approx q$. 
According to Assumption \ref{assum:1} for the $q$-sparse $x_c$, W-RIP is satisfied for $\delta_{2q} < 1/2 \Longrightarrow \delta_q < 1/2$.
Hence, for points close to $x_{c,\mu}$ and small $\mu$ we expect that $\delta_\sigma < 1/2$.
Therefore, the result in Theorem \ref{thm:3} captures only the limiting behaviour of preconditioned $\nabla^2 f_c^\mu(x)$ as $x\to x_{c,\mu}$.
Moreover, according to Remark \ref{rem:2}, Theorem \ref{thm:3}
implies that at the limit the eigenvalues of $\tilde{N}^{-1}\hat{B}$ are also clustered around one.

We now comment on the second result of Theorem \ref{thm:3}, when the eigenvectors of $N^{-\frac{1}{2}}\nabla^2 f_c^\mu(x) N^{-\frac{1}{2}} $ belong in $\mbox{Ker}(W^*_{\mathcal{B}_\nu^c})$.
In this case, according to Theorem \ref{thm:3} the preconditioner removes the disadvantageous dependence of the spectrum of $\nabla^2 f_c^\mu(x)$ on the smoothing parameter $\mu$.
However, there is no guarantee that the eigenvalues of $N^{-1}\nabla^2 f_c^\mu(x)$ are clustered around one, regardless of the distance from the optimal solution $x_{c,\mu}$. 
Again, because of Remark \ref{rem:2} we expect that the spectrum 
of $\tilde{N}^{-1}\hat{B}$ at the limit will have a similar behaviour. 

The scenario of limiting behaviour of the preconditioner is pessimistic.
Let $\tilde{\sigma}$ be the minimum sparsity level such that matrices $A$ and $W$ are W-RIP with $\delta_{\tilde{\sigma}} < 1/2$. Then,
according to the uniform property of W-RIP (i.e. it holds for all at most $\tilde{\sigma}$-sparse vectors), 
the preconditioner will start to be effective even if the iterates $W^*x^k$ are approximately sparse with $\tilde{\sigma}$ dominant non-zero components.
Numerical evidence is provided in Figure \ref{figSpec} to confirm this claim.
In Figure \ref{figSpec} the spectra $\lambda(\hat{B})$ and $\lambda(\tilde{N}^{-1}\hat{B})$ are displayed for a sequence of systems which arise 
when an iTV problem is solved. For this iTV problem we set matrix $A$ to be a partial $2D$ DCT, $n=2^{10}$, $m=n/4$, $c=2.29e$-$2$
and $\rho=5.0e$-$1$. 
For the first experiment, which corresponds to Figures \ref{fig5_1_a} and \ref{fig5_1_b} the smoothing parameter has been set to $\mu=1.0e$-$3$.
For the second experiment, which corresponds to Figures \ref{fig5_1_c} and \ref{fig5_1_d} we set $\mu=1.0e$-$5$. 
Observe in Figures \ref{fig5_1_b} and \ref{fig5_1_d} that the eigenvalues of matrix $\tilde{N}^{-1}\hat{B}$ are \textit{nicely clustered around one}.
On the other hand in Figures \ref{fig5_1_a} and \ref{fig5_1_c} the eigenvalues of matrix $\hat{B}$
have large variations and there are many small eigenvalues close to zero.
Notice that the preconditioner was effective not only at optimality as it was predicted by theory, but through all iterations of pdNCG. This is because starting from the zero solution 
the iterates $W^*x^k$ were maintained approximately sparse $\forall k$. 

Additionally, in Figure \ref{figSpec2} we show the number of CG/PCG iterations and time required for the unpreconditioned 
and the preconditioned cases of the same experiment.
Observe in Figures \ref{fig5_1_e} ($\mu=1.0e$-$3$) and \ref{fig5_1_f} ($\mu=1.0e$-$5$) that 
the number of PCG iterations are much less than the number of CG iterations. 
Surprisingly, the number of CG iterations required for the experiment
with $\mu=1.0e$-$3$ were more than the number of iterations for CG for the experiment with $\mu=1.0e$-$5$. Although, matrix $\hat{B}$ has worse condition number in the latter case, see the values of the vertical axis in Figures \ref{fig5_1_a} and \ref{fig5_1_c}.
We believe that this is because of the slightly better clustering of the eigenvalues of matrix $\hat{B}$ for the experiments with $\mu=1.0e$-$5$; see Figures \ref{fig5_1_a} and \ref{fig5_1_c}.
Finally, PCG was faster than CG in terms of required time for convergence, see Figures \ref{fig5_1_g} and \ref{fig5_1_h}.
\begin{figure}
\centering
	\begin{subfigure}[b]{0.48\textwidth}
		\includegraphics[width=\textwidth]{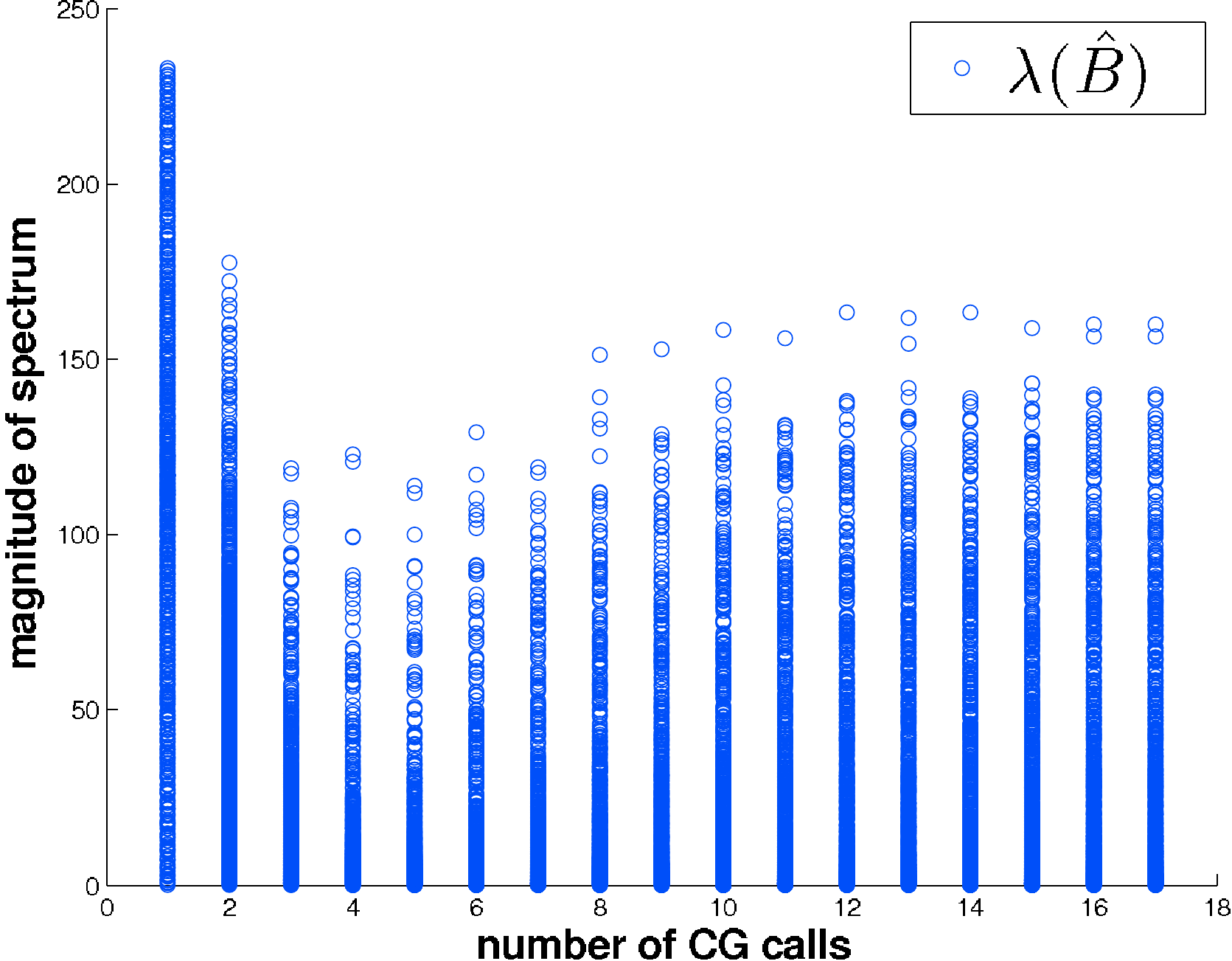}		
		\caption{Unpreconditioned}
		\label{fig5_1_a}%
         \end{subfigure}
	\begin{subfigure}[b]{0.48\textwidth}
		\includegraphics[width=\textwidth]{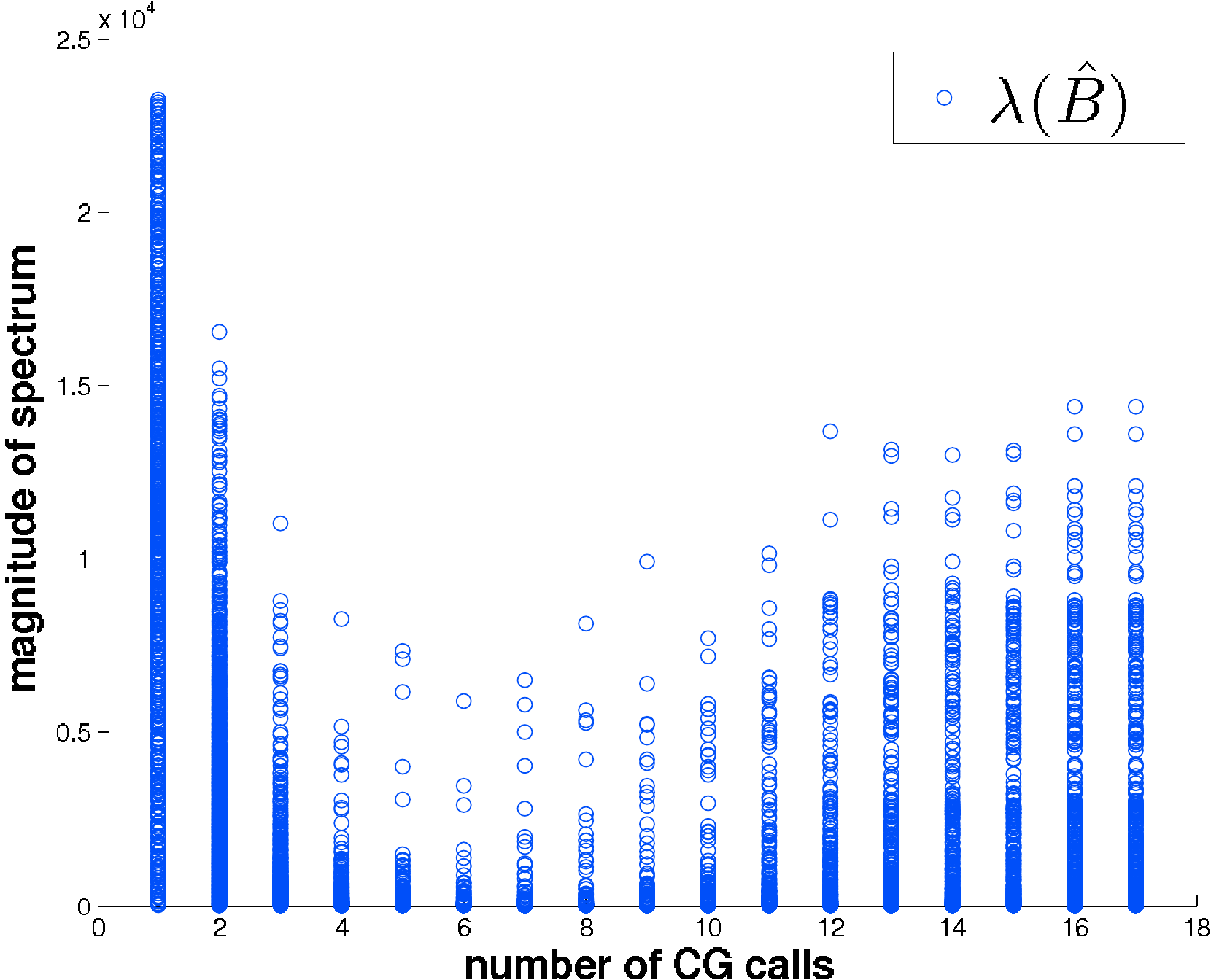}		
		\caption{Unpreconditioned}
		\label{fig5_1_c}%
         \end{subfigure}
	\begin{subfigure}[b]{0.48\textwidth}
		\includegraphics[width=\textwidth]{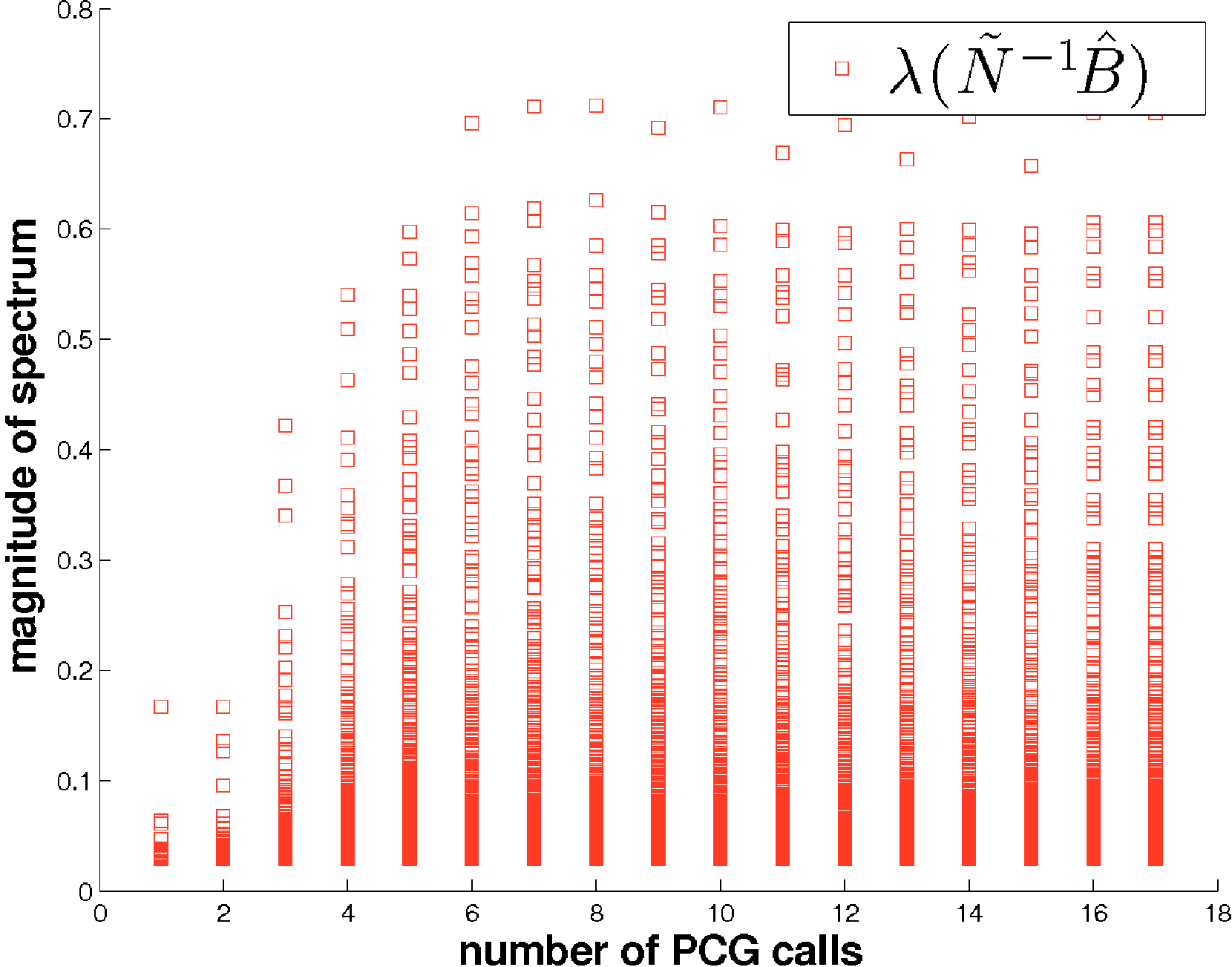}	
		\caption{Preconditioned}
		\label{fig5_1_b}%
         \end{subfigure}
         \quad
	\begin{subfigure}[b]{0.48\textwidth}
		\includegraphics[width=\textwidth]{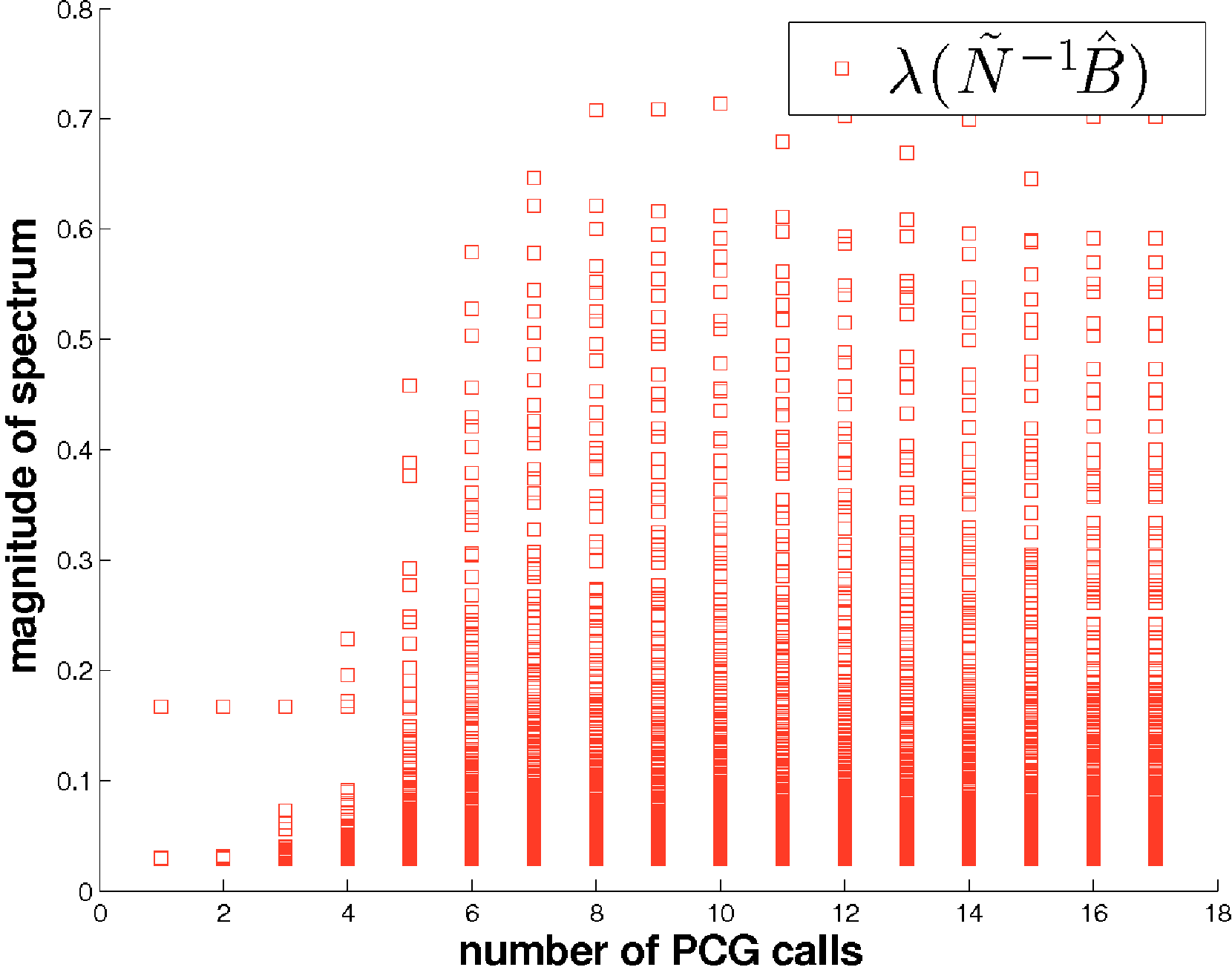}	
		\caption{Preconditioned}
		\label{fig5_1_d}%
         \end{subfigure}
	\caption{Spectra of $\lambda(\hat{B})$, $\lambda(\tilde{N}^{-1}\hat{B})$ when pdNCG is applied with smoothing parameter $\mu=1.0e$-$3$ (left column of sub-figures)
	and $\mu=1.0e$-$5$ (right column of sub-figures). 
	Matrix $A$ in $\hat{B}$ is a $2D$ DCT, $n=2^{10}$, $m=n/4$ and $c=2.29e$-$2$. Seventeen systems are solved in total for each experiment.
	}
	\label{figSpec}%
\end{figure}
  
\begin{figure}
\centering
	\begin{subfigure}[b]{0.48\textwidth}
		\includegraphics[width=\textwidth]{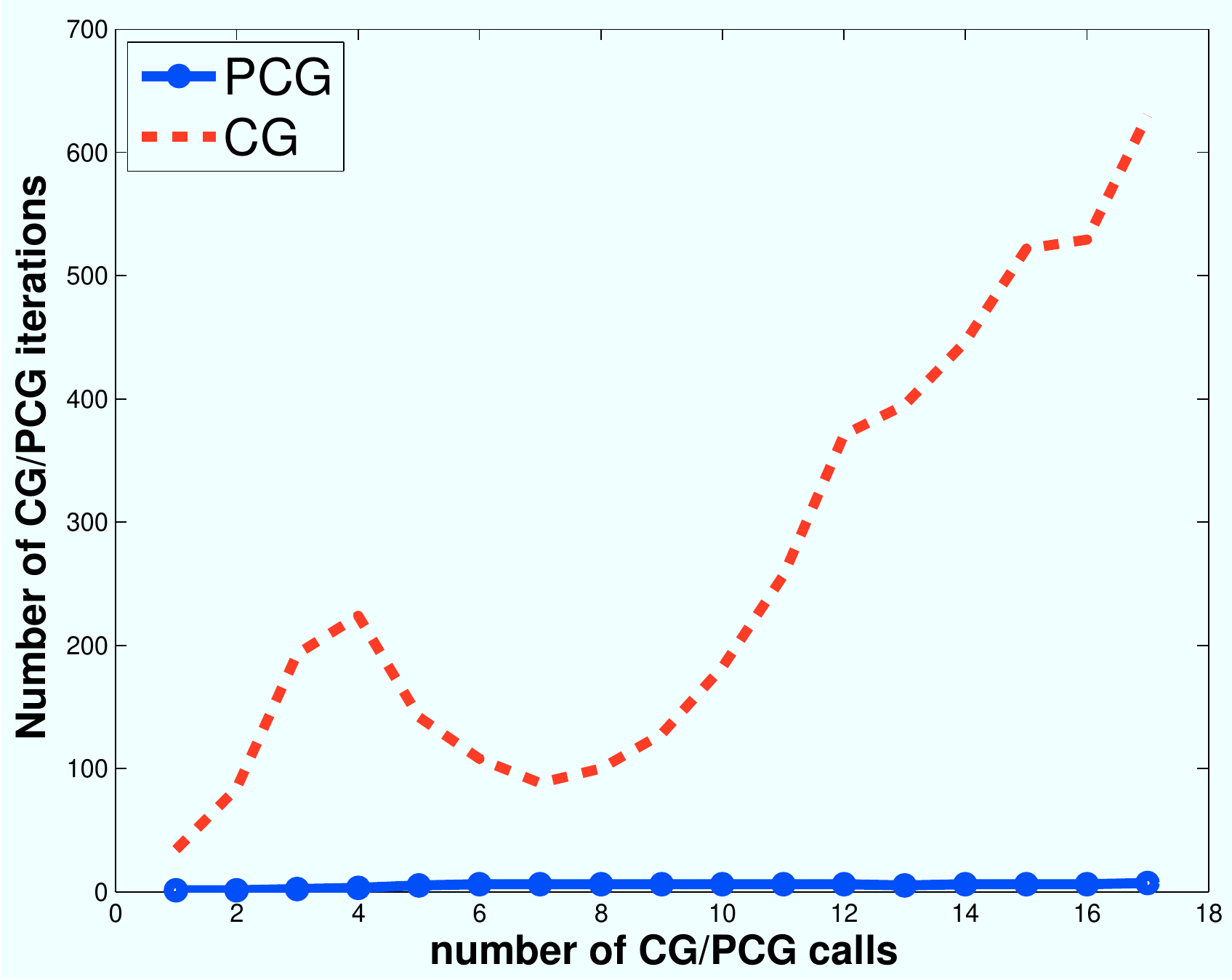}		
		\caption{CG/PCG iterations}
		\label{fig5_1_e}%
         \end{subfigure}
         \quad
	\begin{subfigure}[b]{0.48\textwidth}
		\includegraphics[width=\textwidth]{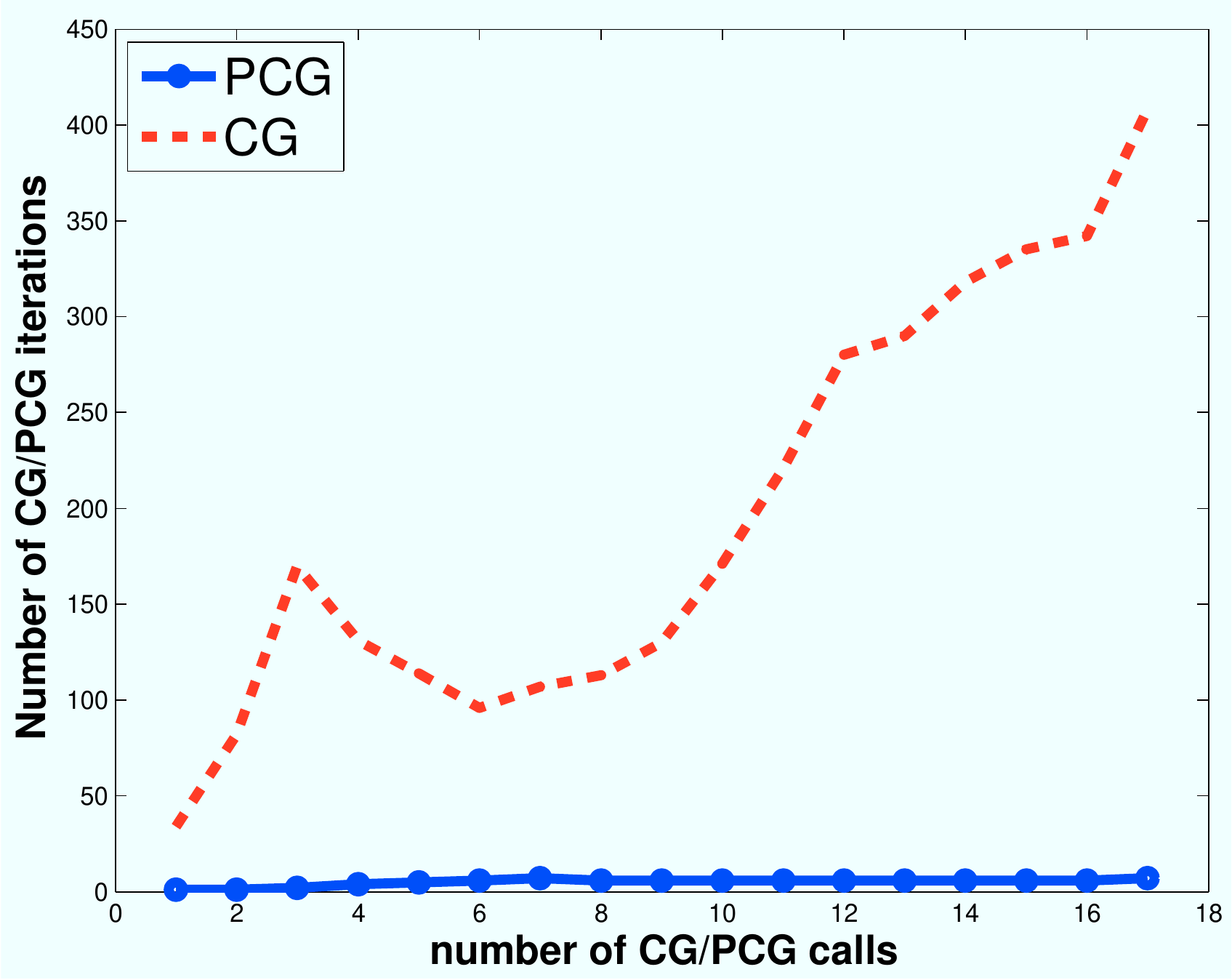}	
		\caption{CG/PCG iterations}
		\label{fig5_1_f}%
         \end{subfigure}
	\begin{subfigure}[b]{0.48\textwidth}
		\includegraphics[width=\textwidth]{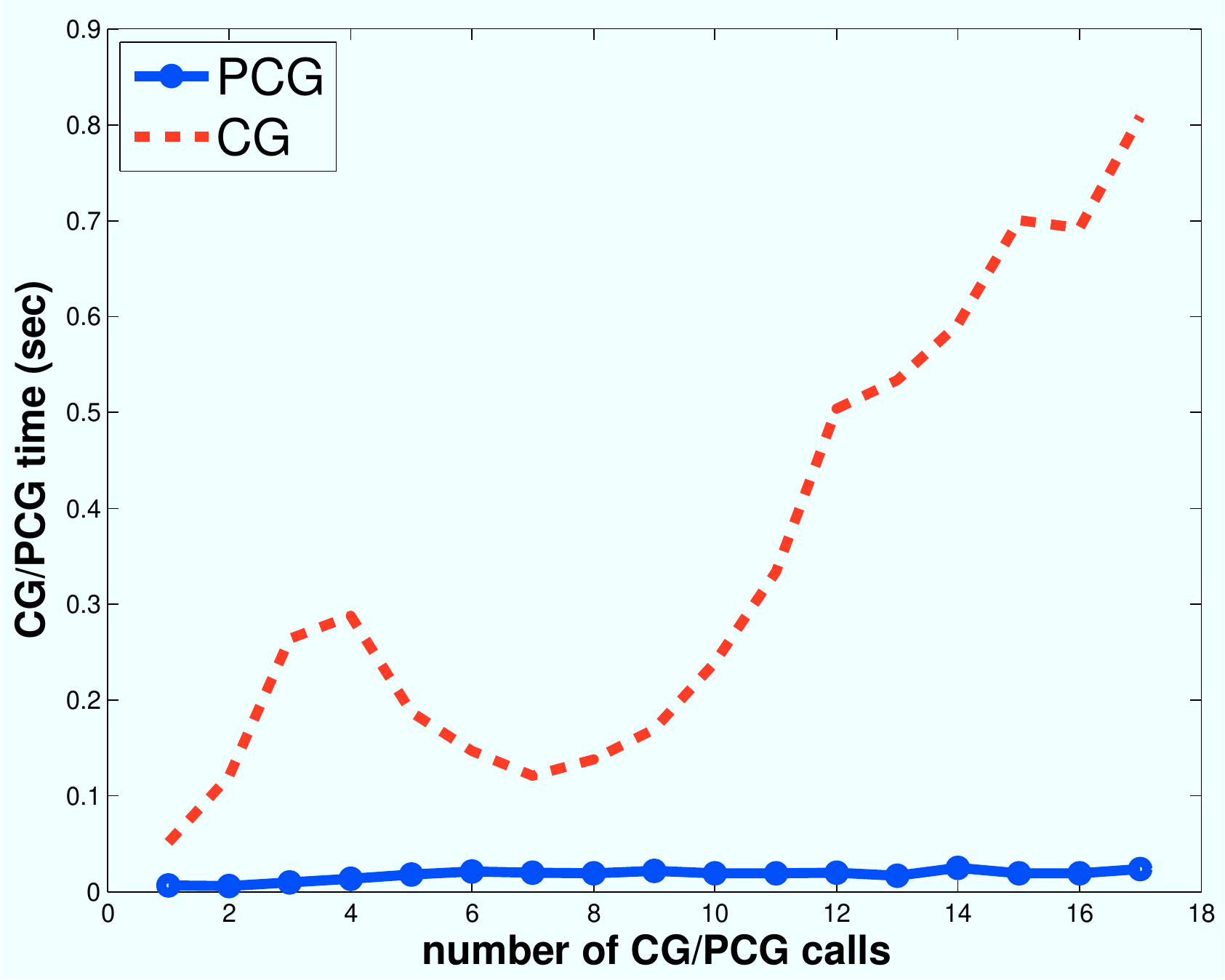}		
		\caption{CG/PCG time}
		\label{fig5_1_g}%
         \end{subfigure}
         \quad
	\begin{subfigure}[b]{0.48\textwidth}
		\includegraphics[width=\textwidth]{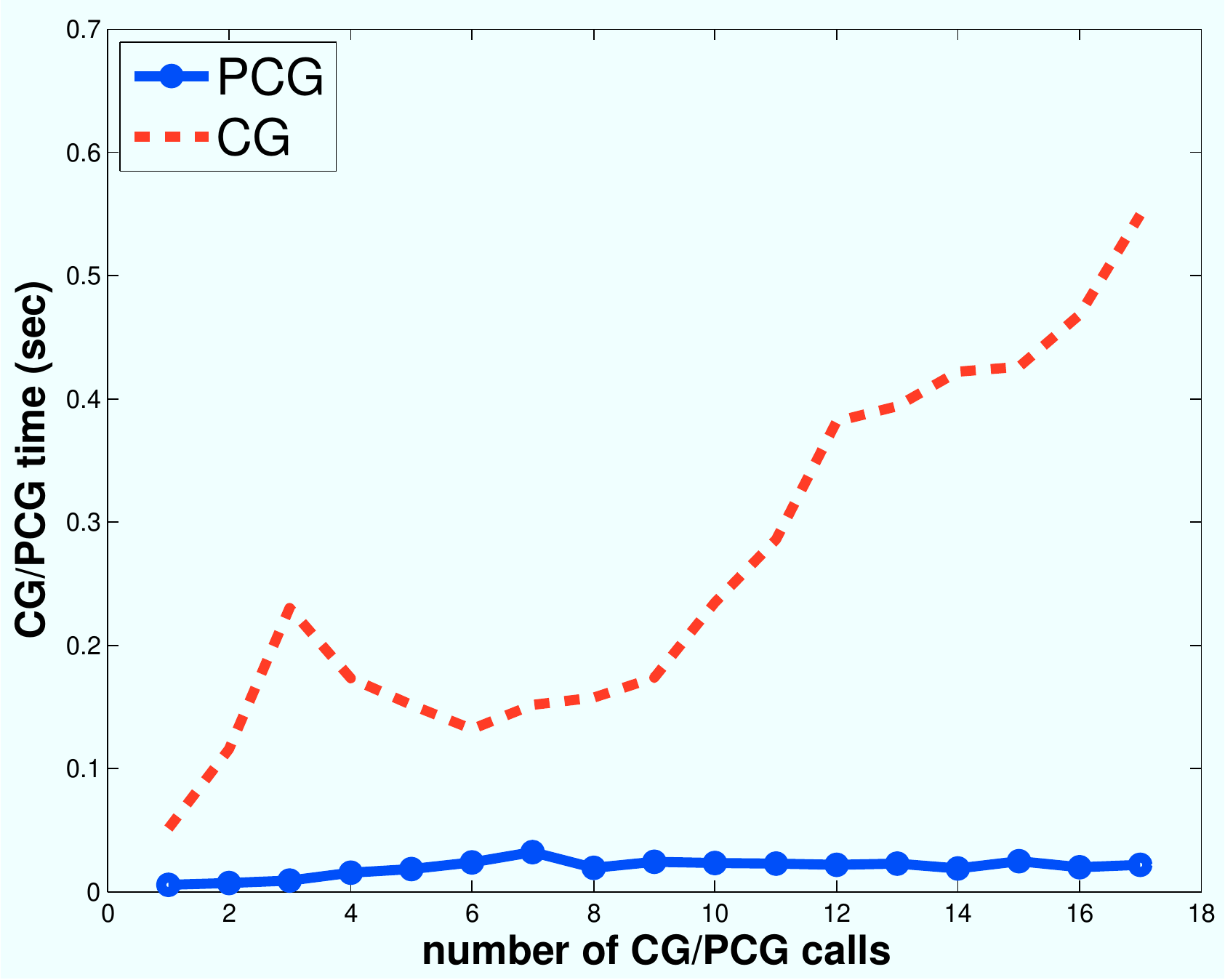}	
		\caption{CG/PCG time}
		\label{fig5_1_h}%
         \end{subfigure}
	\caption{Number of CG/PCG iterations and required time when pdNCG is applied with smoothing parameter $\mu=1.0e$-$3$ (left column of sub-figures)
	and $\mu=1.0e$-$5$ (right column of sub-figures). 
	Matrix $A$ in $\hat{B}$ is a $2D$ DCT, $n=2^{10}$, $m=n/4$ and $c=2.29e$-$2$. Seventeen systems are solved in total for each experiment.
	}
	\label{figSpec2}%
\end{figure}

\subsection{Solving systems with the preconditioner}\label{subsec:howsolvesystems}
In this subsection we discuss 
how we can solve systems with the proposed preconditioner $\tilde{N}$.

The simplest case is when $W$ is an orthogonal matrix. In this case it is readily verified that solving systems with matrix $\tilde{N}$ costs two matrix vector products with matrices $W$, $W^\intercal$
and the inversion of a diagonal matrix. Therefore, this operation is inexpensive. 
Especially, when
matrix $W$ is a DCT or a Wavelets transform for which matrix-vector products with $W$ and $W^\intercal$ can be calculated in $\mathcal{O}(n\log n)$ and $\mathcal{O}(n)$ time,
respectively.


Let us now consider iTV problems. Let $p_v$ be the vertical number of pixels of the image to be reconstructed and $p_h$ be the horizontal number of pixels.\ 
For simplicity we will assume that the image is square, hence, $p=p_v=p_h$.\ Additionally, we assume that the image is handled in a vectorized form, i.e., instead of an image of size $p\times p$ we have 
a vectorized image of size $p^2 \times 1$ where the columns of the image are stuck one after the other.\ In this case, for iTV the $W\in \mathbb{C}^{n \times n}$ matrix
in problem \eqref{prob1} is square with $n=p^2$, rank-deficient with $rank (W)=n-1$. Matrix $W$ corresponds to a discretization of the nabla operator and it measures local differences of pixels when applied on a vectorized
image.\
In particular,
$$
W = W_v + \sqrt{-1}W_h,
$$
where $W_v\in\mathbb{R}^{n\times n}$ and $W_h\in\mathbb{R}^{n\times n}$. Matrix $W_v$ measures vertical differences of pixels when applied on a vectorized image and it has the following non-zero components: 
$$
[W_v]_{p(j-1) + i, p(j-1)+i} = -1 \ \mbox{ and } \ [W_v]_{p(j-1) + i, p(j-1)+i+1} = 1
$$
$\forall j=1,2,\cdots,p$ and $\forall i=1,2,\cdots,p-1$.\ Matrix $W_h$ measures horizontal differences of pixels when applied on a vectorized image and it has the following non-zero pattern: 
 $$
[W_h]_{p(j-1) + i, p(j-1)+i} = -1 \ \mbox{ and } \ [W_h]_{p(j-1) + i, p(j-1)+i+p} = 1
$$
$\forall j=1,2,\cdots,p-1$ and $\forall i=1,2,\cdots,p$.\

Observe that matrix $\tilde{N}$ in this case is at most seven-diagonal and it has the following block tridiagonal form
\begin{equation}\label{blockprec}
\tilde{N}= 
\begin{bmatrix}   C_1             & K_1^\intercal   &    			  &                            &         		   &  				   \\
                       	  K_1		     & C_2                 & K_2^\intercal     & 			       &         		   &  				   \\
                           		     & 	K_2		      &    C_3                 & K_3^\intercal      & 		           &  				   \\
                      			     &   		      & K_3		          & \ddots		       &    \ddots      	  & 		  		   \\       
                      			     & 			      &  			  & 	 \ddots 	      &	C_{p-1}		           & K_{p-1}^\intercal 	  \\	
                      			     & 			      &  			  & 	 	 	      &K_{p-1}	           & C_p 			   \\			              
       \end{bmatrix},
\end{equation}
where $C_i\in\mathbb{R}^{p\times p}$ are tridiagonal matrices $\forall i=1,2,\cdots,p$ and $K_i\in\mathbb{R}^{p\times p}$ are upper bidiagonal matrices $\forall i=1,2,\dots,p-1$.
Solving systems with the symmetric positive definite block tridiagonal matrix $\tilde{N}$ can be done in $\mathcal{O}(p^4)$ time by calculating its Cholesky decomposition without re-ordering.
More precisely, the Cholesky factor $\tilde{L}$ of $\tilde{N}= \tilde{L} \tilde{L}^\intercal$ is of the form
\begin{equation}\label{eq:precfactor}
\tilde{L}= 
\begin{bmatrix}   L_1             & 			       &    			  &                            &         		   &  				   \\
                       	  U_1	     & L_2                 & 			  & 			       &         		   &  				   \\
                           		     & 	U_2		      &    L_3                 & 			      & 		           &  				   \\
                      			     &   		      & U_3		          & \ddots		       &    		      	  & 		  		   \\       
                      			     & 			      &  			  & 	 \ddots 	      &	L_{p-1}		           & 		 	  \\	
                      			     & 			      &  			  & 	 	 	      &U_{p-1}	           & L_p 			   \\			              
       \end{bmatrix},
\end{equation}
where $L_i\in\mathbb{R}^{p\times p}$ $\forall i=1,2,\cdots,p$ and $U_i\in\mathbb{R}^{p \times p}$ $\forall i=1,2,\cdots,p-1$. The factor $\tilde{L}$ can be calculated by 
using $\tilde{N} = \tilde{L}\tilde{L}^\intercal$, \eqref{blockprec} and \eqref{eq:precfactor} to get
\begin{align}
\label{eq:chol_1}
L_1 L_1^\intercal &=C_1 \\ \label{eq:chol_2}
U_iL_i^\intercal &= K_i \quad \forall i=1,2,\cdots,p-1 \\\label{eq:chol_4}
U_{i-1}U_{i-1}^\intercal + L_iL_i^\intercal & = C_i \quad \forall i=2,3,\cdots,p.
\end{align}
Notice that $C_1$ in \eqref{blockprec} is symmetric positive definite because $\tilde{N}$ is a symmetric positive definite matrix. 
Since $C_1$ is symmetric positive definite and tridiagonal we can obtain the lower bidiagonal matrix $L_1$ in \eqref{eq:chol_1} by calculating the Cholesky decomposition of matrix $C_1$. 
Calculation of $L_1$ can be done in $\mathcal{O}(p)$ time.
Matrix $U_1$ is upper diagonal and
can be calculated in $\mathcal{O}(p^3)$ time by solving $U_1 L_1^\intercal = K_1$.
The next step is the calculation of $L_2$ using \eqref{eq:chol_4}, which is the Cholesky factor of $C_2 - U_{1}U_{1}^\intercal$. 
The term $C_2 - U_{1}U_{1}^\intercal$ can be calculated in $\mathcal{O}(p^3)$.
Notice that 
$C_2 - U_{1}U_{1}^\intercal= C_2 - K_1 C_1^{-1}K_1^\intercal$ is the Schur complement of 
a two by two block matrix 
\begin{equation*}
S = 
\begin{bmatrix}   
C_1 & K_1^\intercal \\
K_1 & C_2	              
\end{bmatrix}.
\end{equation*}
Matrix $S$ is symmetric positive definite because matrix $\tilde{N}$ is positive definite, this can be readily seen from \eqref{blockprec}.
Hence, the Schur complement $C_2 - K_1 C_1^{-1}K_1^\intercal$ of $S$ is a symmetric positive definite matrix.
By repeating this process $p$ times using \eqref{eq:chol_2} and \eqref{eq:chol_4} we obtain the matrices $L_i$ and $U_i$.
Since each step requires $\mathcal{O}(p^3)$ time, the total calculation of $\tilde{L}$ requires $\mathcal{O}(p^4)$.

We note that this $\mathcal{O}(p^4)$ operation is expensive for the problems of our interest. 
Therefore, instead of naively calculating the factor $\tilde{L}$ we employ 
AMD (Approximate Minimum Degree) ordering \cite{amdalgo} by invoking MATLAB's backslash operator. 
AMD results in significant reduction in the FLOPS (FLoating-point Operations Per Second) rate and the filling in $\tilde{L}$. 
In Table \ref{tablescaleprec} we present how the running time of MATLAB's backslash operator scales as $p$
increases from to $2^3$ to $2^{11}$. The results are averaged over $20$ trials. 
Observe that the running time scales nearly $\mathcal{O}(p^2)$, which is a significant 
improvement compared to $\mathcal{O}(p^4)$. The matrices $\tilde{N}$ in this experiment were constructed using pdNCG.
Additionally, for this experiment we run MATLAB only in one thread in order to eliminate the effects of multithread implementations.


\begin{table}
\center
\caption{Scaling of running time of Cholesky factorization with ADM ordering, $p$ denotes the number of pixels of a $p \times p$ image.}
\begin{tabular}{|c|c|c|c|c|c|c|c|c|}
\hline
    $p$                   & $16$           & $32$ &  $64$ & $128$ & $256$ & $512$ & $1024$ & $2048$ \\ \hline \hline
    CPU (sec) &  $3.6e$-$4$   & $1.4e$-$3$     & $6.1e$-$3$  & $3.0e$-$2$   & $1.6e$-$1$   & $9.1e$-$1$ & $5.2$ & $30.6$\\
\hline
\end{tabular}
\label{tablescaleprec}
\end{table}

Unfortunately, solving systems with the proposed preconditioner is not always an inexpensive procedure. In particular, 
solving systems with matrix $\tilde{N}$ when $W$ has orthonormal rows is a non-trivial operation.
An example is radar and sonar systems \cite{apps4}, where $W\in\mathbb{R}^{n\times l}$ is a Gabor frame with $n\le l$.
In this case, $\tilde N$ does not have a structure which can be exploited in order to solve systems with it inexpensively. 
In similar cases in the literature, i.e., denoising of images \cite{approxprec}, attempts have been made to solve systems with the preconditioner using an iterative method. 
In our case $\tilde{N}$ is a symmetric positive definite matrix. Therefore, $\mbox{CG}_{\tilde{N}}$ can be used, where $\mbox{CG}_{\tilde{N}}$ denotes conjugate gradients method, but we add a subscript 
$\tilde{N}$ in order to distinguish from the unpreconditioned CG notation for the system in \eqref{eq108}.

Our personal numerical
experience regarding this strategy suggests that it is difficult to control the time required by $\mbox{CG}_{\tilde{N}}$ to solve systems with $\tilde{N}$ approximately, such that
the overall time required by PCG is reduced. Although, the number of PCG iterations is decreased, even with a small number of $\mbox{CG}_{\tilde{N}}$ iterations.
In Figure \ref{figapproxprec} we present an example where using the preconditioner $\tilde{N}$ in an iterative fashion does not decrease the time required by pdNCG.
The tested problem is a radar tone reconstruction instance \cite{convexTemplates}, where $W\in\mathbb{R}^{n\times l}$ is a Gabor frame with $n=2^{13}$, $l=228864$,  
$\mu=1.0e$-$5$, $c=2.41e$-$5$ and $\rho=5.0e$-$1$. Matrix $A$ is a block-diagonal with $\pm 1$ 
entries in the blocks and $m=648$ rows.
We make four experiments. For the first three experiments we vary the number of $\mbox{CG}_{\tilde{N}}$ iterations for solving systems with $\tilde{N}$.  
The number of $\mbox{CG}_{\tilde{N}}$ iterations is set to $5$, $10$ and $20$, respectively for each experiment. The last experiment is using unpreconditioned CG. Observe in Figure \ref{fig5_2_a}
that PCG requires significantly fewer iterations than unpreconditioned CG for all settings of $\mbox{CG}_{\tilde{N}}$.
However,
notice in Figure \ref{fig5_2_b} the time required by PCG is larger than the time required by CG.


\begin{figure}
\centering
	\begin{subfigure}[b]{0.48\textwidth}
		\includegraphics[width=\textwidth]{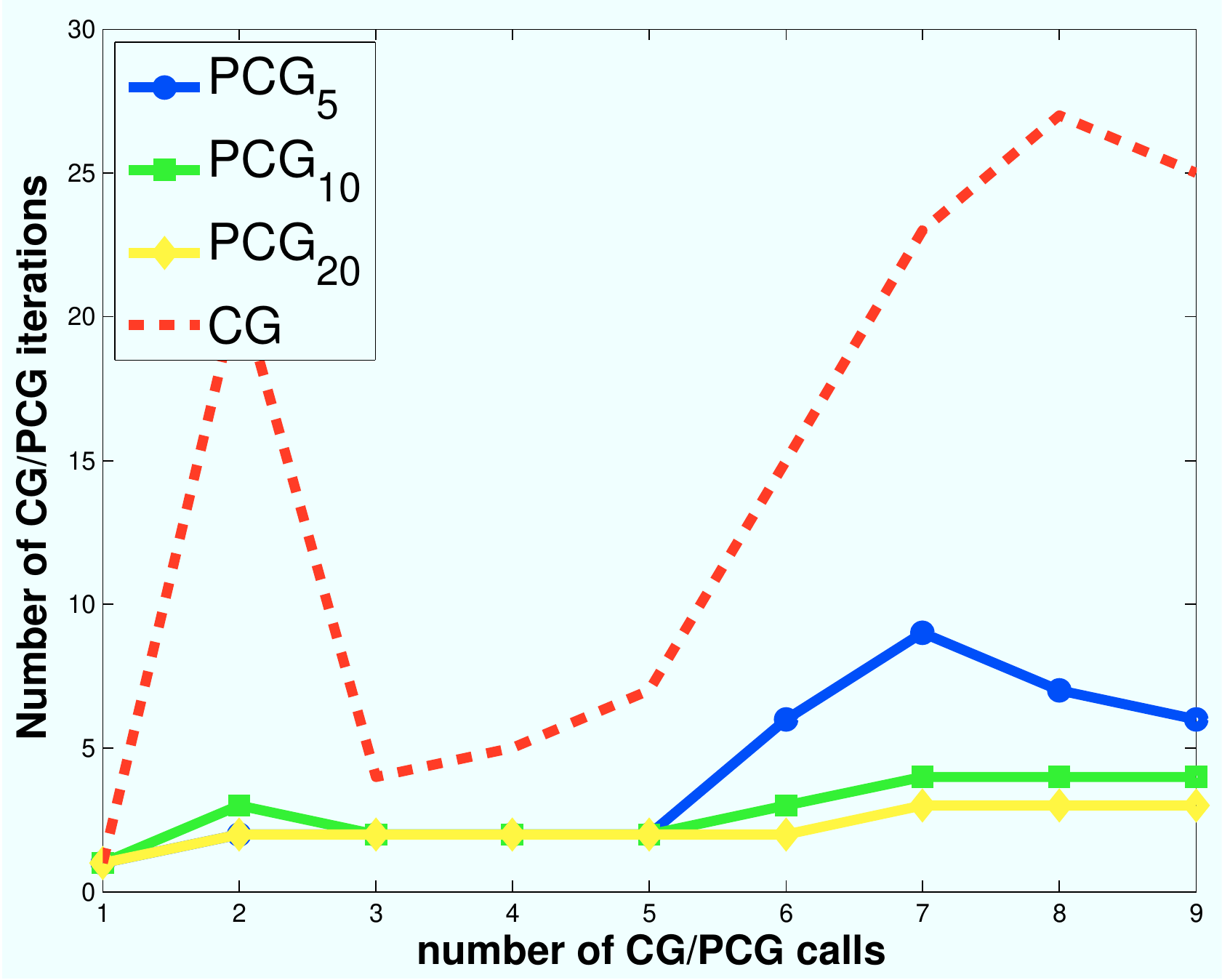}		
		\caption{CG/PCG iterations}
		\label{fig5_2_a}%
         \end{subfigure}
         \quad
	\begin{subfigure}[b]{0.48\textwidth}
		\includegraphics[width=\textwidth]{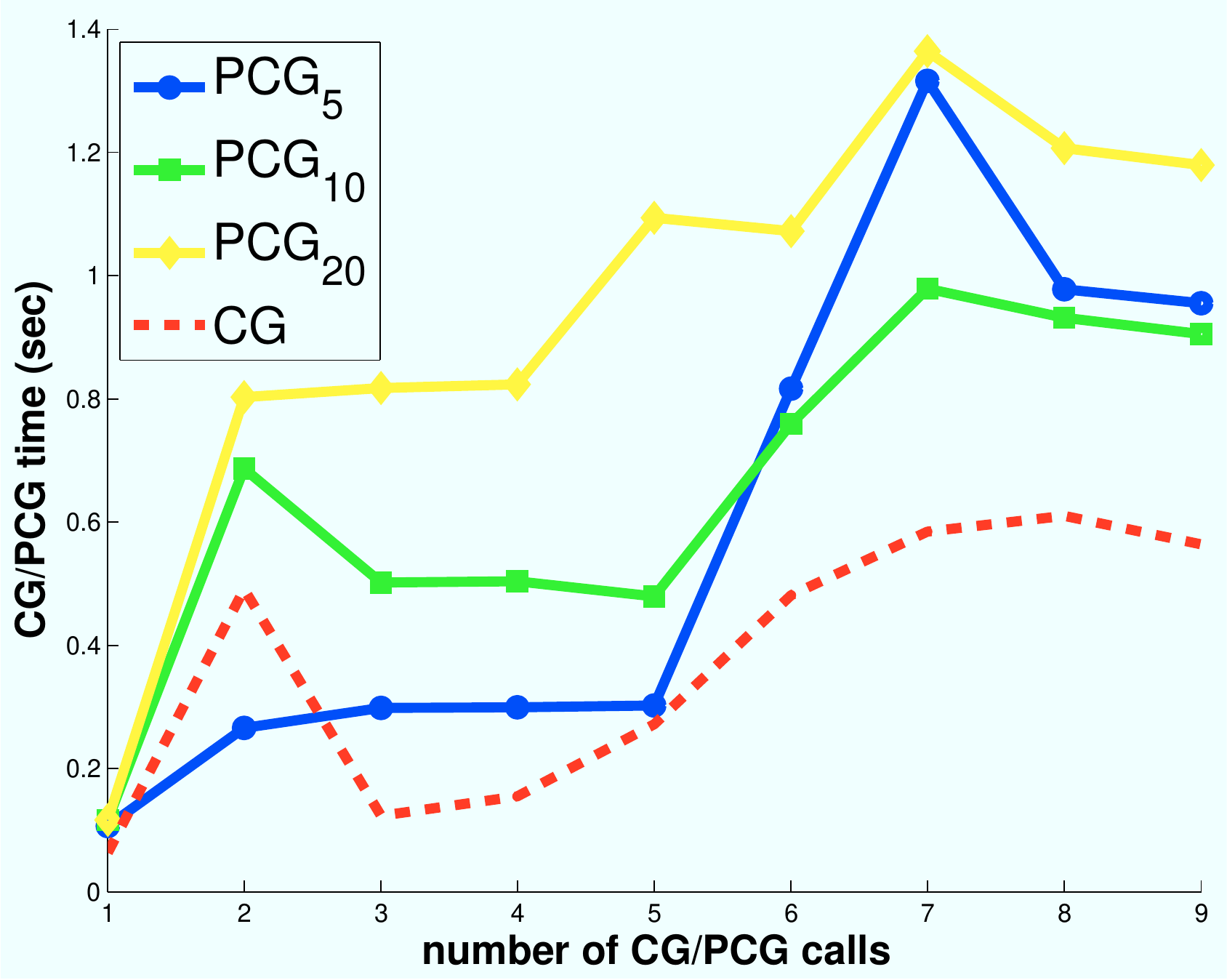}	
		\caption{CG/PCG time}
		\label{fig5_2_b}%
         \end{subfigure}
         \caption{Number of CG/PCG iterations and required time when $\mu=1.0e$-$5$ and systems with the preconditioner $\tilde{N}$ are solved 
         approximately using conjugate gradients. $\mbox{PCG}_{5}$, $\mbox{PCG}_{10}$ and $\mbox{PCG}_{20}$ correspond to PCG,
         where systems are solved with $\tilde{N}$ approximately using conjugate gradients which is terminated after $5$, $10$ and $20$ iterations, respectively.
         }
	\label{figapproxprec}%
\end{figure}

\section{Continuation}\label{sec:cont}
In the previous section we have shown that by using preconditioning, the spectral properties of systems which arise can be improved. However, for initial stages of pdNCG
a similar result can be achieved without the cost of having to apply preconditioning. In particular, at initial stages 
the spectrum of $\hat{B}$ can be controlled to some extent through inexpensive continuation. Whilst preconditioning is enabled only at later stages of the process. 
Briefly by continuation it is meant that a sequence of ``easier" subproblems is solved, instead of solving directly problem \eqref{prob2}. 
The reader is referred to Chapter $11$ in \cite{mybib:NocedalWright} for a survey on continuation methods in optimization.

 In this paper we use a similar continuation framework to \cite{IEEEhowto:Nesta,ctnewtonold,ctpdnewton,haleconti}. In particular, 
 a sequence of sub-problems \eqref{prob2} is solved, where each of them is parameterized by $c$ and $\mu$ simultaneously.
 Let $\tilde{c}$ and $\tilde{\mu}$ be the final parameters for which problem \eqref{prob2} must be solved. Then
 the number of continuation iterations $\vartheta$ is set to be the maximum order of magnitude between $1/\tilde{c}$ and $1/\tilde{\mu}$.
 For instance, if $\tilde{c}=1.0e$-$2$ and $\tilde{\mu}=1.0e$-$5$ then $\vartheta := \max(2,5) = 5$.
 If $\vartheta \ge 2$, then the initial parameters $c^0$ and $\mu^0$ are both always set to $1.0e$-$1$ and the intervals $[c^0,\tilde{c}]$ and $[\mu^0,\tilde{\mu}]$
 are divided in $\vartheta$ equal subintervals in logarithmic scale. For all experiments that we have reported in this paper we have found that 
 this setting leads to a generally acceptable improvement over pdNCG without continuation.
 The pseudo-code of the proposed continuation framework is shown in Figure \ref{fig:1}.
 
 \begin{figure}
\begin{algorithmic}[1]
\vspace{0.1cm}
\STATE \textbf{Outer loop:} For $j=0,1,2,\ldots ,\vartheta$, produce $(c^j,\mu^j)_{j=0}^\vartheta$.
\STATE \hspace{0.5cm}\textbf{Inner loop:} Approximately solve the subproblem
\begin{equation*}
 \mbox{minimize} \  f_{c^j}^{\mu^j}(x)
\end{equation*}
\hspace{0.5cm} using pdNCG and by initializing it with the  solution\\ 
\hspace{0.5cm} of the previous subproblem.
\end{algorithmic}
\caption{Continuation framework}
\label{fig:1}
\end{figure}

Figure \ref{fig3} shows the performance of pdNCG for four cases, no continuation and no preconditioning, no continuation with preconditioning, continuation with preconditioning through the whole process 
and continuation with preconditioning only at later stages. The vertical axis of Figure \ref{fig3} shows the relative error $\|x^k-x_{\tilde{c},\tilde{\mu}}\|_2/\|x_{\tilde{c},\tilde{\mu}}\|_2$.
The optimal $x_{\tilde{c},\tilde{\mu}}$ is obtained by using pdNCG with parameter tuning set to recover a highly accurate solution. The horizontal axis shows the CPU time.
The problem is an iTV problem where matrix $A$ is a partial $2D$ DCT, $n=2^{16}$, $m=n/4$, $c=5.39e$-$2$
and $\rho=5.0e$-$1$. The final smoothing parameter $\tilde{\mu}$ is set to $1.0e$-$5$. For the experiment that preconditioning is used only at later stages of continuation;
preconditioning is enabled when $\mu^j \le 1.0e$-$4$, where $j$ is the counter for continuation iterations. 
All experiments are terminated when the relative error $\|x^k-x_{\tilde{c},\tilde{\mu}}\|_2/\|x_{\tilde{c},\tilde{\mu}}\|_2 \le 1.0e$-$1$. Solving approximately the problem is an acceptable
practice since the problem is very noisy (i.e. signal-to-noise-ratio is $10$ decibel) and there is not much improvement of the reconstructed image if more accurate solutions are requested.
Finally, all other parameters of pdNCG were set to the same values for all four experiments.
Observe in Figure \ref{fig3} that continuation with preconditioning only at late stages was the best approach for this problem. 

\begin{figure}
\centering
	\begin{subfigure}[b]{0.7\textwidth}
		\includegraphics[width=\textwidth]{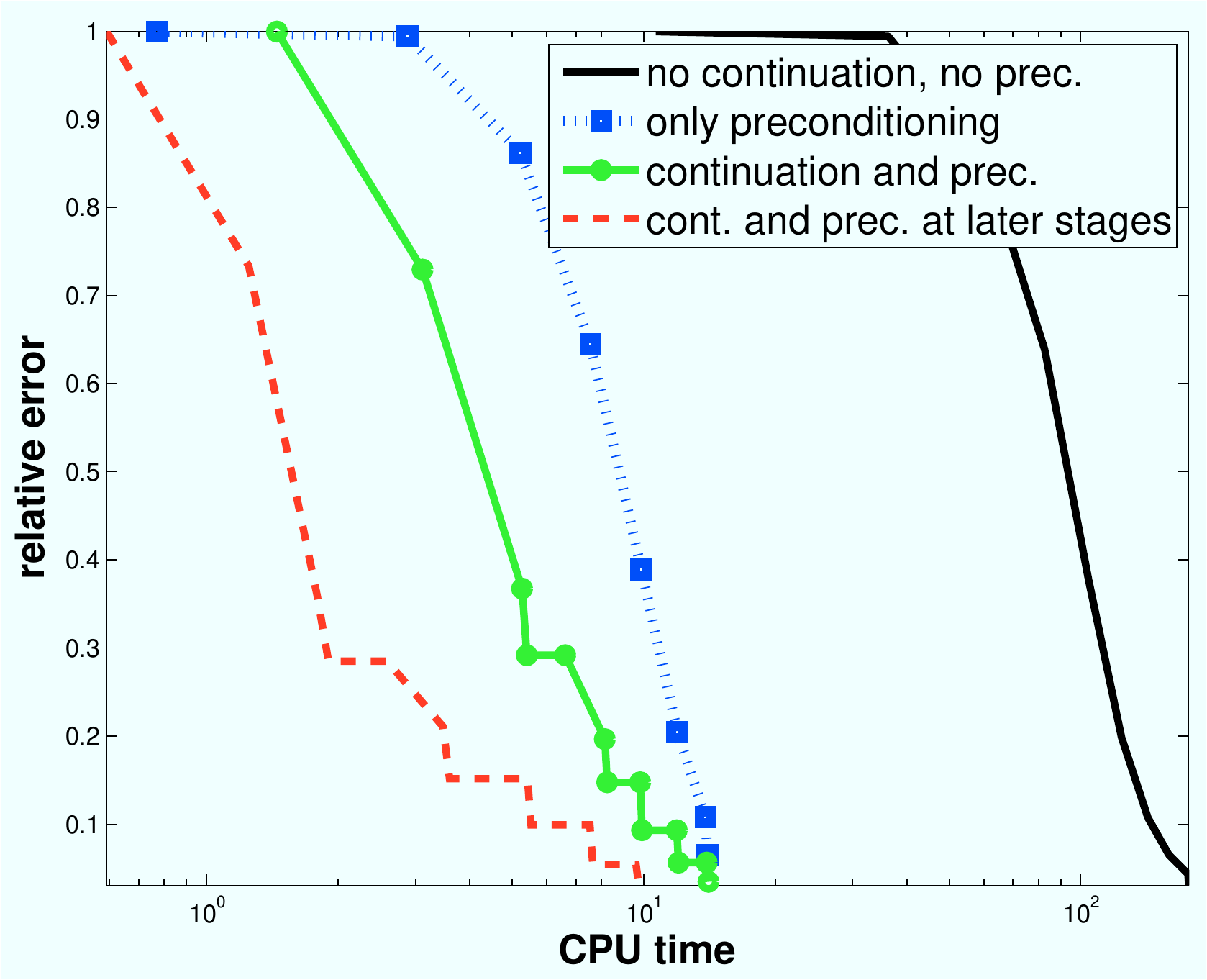}		
         \end{subfigure}
         \caption{Performance of pdNCG for four different settings, i) no continuation and no preconditioning, ii) no continuation with preconditioning, iii) continuation with preconditioning through all iterations and iv) continuation 
         with preconditioning only at later stages. The vertical axis presents the relative error $\|x^k-x_{\tilde{c},\tilde{\mu}}\|_2/\|x_{\tilde{c},\tilde{\mu}}\|_2$, where $x_{\tilde{c},\tilde{\mu}}$ is 
         the optimal solution for the parameter setting $\tilde{c}$, $\tilde{\mu}$ in problem \eqref{prob2}. The horizontal axis is in log-scale.}
	\label{fig3}%
\end{figure}

\section{Numerical experiments}\label{secNumExp}
In this section we demonstrate the efficiency of pdNCG against state-of-the-art methods for CS. 
We briefly discuss existing methods, describe the setting of the experiments and finally present numerical
results. All experiments that are demonstrated in this paper can be reproduced by downloading the software from 
\url{http://www.maths.ed.ac.uk/ERGO/pdNCG/}.

\subsection{Existing algorithms}\label{sec:algos}
We compare pdNCG with three state-of-the-art first-order methods, TFOCS \cite{convexTemplates}, TVAL3 \cite{tval3}
and TwIST \cite{twist}.

\begin{itemize}
\item[-] TFOCS (Templates for First-Order Conic Solvers) is a MATLAB software for the solution of signal reconstruction problems.
TFOCS solves the dual problem of
\begin{equation}\label{prob4}
         \mbox{minimize} \ c\|W^*x\|_1 + \frac{\mu_{T_1}}{2}\|x - x^0\|^2_2 + \frac{1}{2}\|Ax- b \|_2^2, 
\end{equation}
where $\mu_{T_1}$ is a positive constant. TFOCS
also solves the dual problem of
\begin{equation}
  \begin{array}{lll}\label{probqweqwe4}
        & \displaystyle\min_{x\in\mathbb{R}^{n}} & \|W^*x\|_1 + \frac{\mu_{T_2}}{2}\|x - x^0\|^2_2 \\
        &\mbox{subject to:}& \|Ax- b \|_2 \le \epsilon, \\
  \end{array}
\end{equation}
where $\epsilon>0$. 
Although problems \eqref{prob4} and \eqref{probqweqwe4} are 
non-smooth, the regularization terms ${\mu_{T_1}}/{2}\|x - x^0\|^2_2$ and ${\mu_{T_2}}/{2}\|x - x^0\|^2_2$ yield smooth convex dual problems, which can be solved by
standard first-order methods. 
In particular, the smooth dual problems are solved using the Auslender and Teboulle's 
accelerated first-order method \cite{austemb}. In our experiments we present results for TFOCS for both problems \eqref{prob4} and \eqref{probqweqwe4}. We denote by TFOCS\_unc the version that solves the unconstrained 
problem \eqref{prob4} and by TFOCS\_con the version that solves the constrained problem \eqref{probqweqwe4}. TFOCS can be downloaded from \cite{convexTemplates}. 
\item[-] TVAL3 (Total-Variation minimization by Augmented Lagrangian and ALternating direction ALgorithms) is a MATLAB software for the solution of signal reconstruction 
problems regularized with the total-variation semi-norm. TVAL3 reformulates problem \eqref{prob1} to the equivalent problem
\begin{equation}\label{realprob1}
\mbox{minimize} \ \sum_{i=1}^l \|\Omega_i^\intercal x \|_2 + \frac{1}{2c}\|Ax-b\|_2^2,
\end{equation}
where $\Omega_i=[ReW_i, ImW_i]\in\mathbb{R}^{n\times 2}$. Then it solves the augmented Lagrangian reformulation of problem \eqref{realprob1}, which is
\begin{equation}\label{auglagrealprob1}
\mbox{minimize} \ \sum_{i=1}^l (\|u_i\|_2 + \frac{\beta}{2}\|\Omega_i^\intercal x - u_i\|_2^2 - v_i^\intercal (\Omega_i^\intercal x - u_i)) + \frac{1}{2c}\|Ax-b\|_2^2,
\end{equation}
where $u_i,v_i\in\mathbb{R}^2$ and $\beta,c$ are positive constants. The augmented Lagrangian in \eqref{auglagrealprob1} is minimized for variables $x\in\mathbb{R}^n$
and $u_i$ $i=1,2,\cdots,l$. The parameters $v_i$ $i=1,2,\cdots,l$ are handled by the method.

\item[-] TwIST (Two-step Iterative Soft Thresholding): is also a MATLAB software for signal/image processing problems. TwIST solves problem \eqref{prob1}.
TwIST is a nonlinear two-step iterative version of IST, which according to its authors is more effective on ill-conditioned and ill-posed problems.

\end{itemize}

Another solver is
NestA \cite{IEEEhowto:Nesta} (by the same authors as TFOCS) which can also solve \eqref{prob1} but it is applicable only in the case that $(A A^\intercal )^{-1}$ is available.
Additionally, TFOCS is the sucessor of NestA, since both apply the similar techniques but TFOCS is a newer and allows of more control options.
Another method is the Primal-Dual Hybrid Gradient (PDHG) of \cite{pdhg}.
PDHG has been reported to be very efficient for imaging applications such as denoising and deblurring, for which
matrix $A$ is the identity or a square and full-rank matrix which is inexpensively diagonalizable. Unfortunately, this is not the case for the CS problems that we are interested in. 
However, for all earlier mentioned methods the matrix inversion can be replaced with a solution of a linear system at every iteration of the methods or a one-time cost of a factorization. 
To the best of our knowledge, there are no available implementations with such modifications for these methods.

There exists also a generic proximal algorithm for total-variation \cite{gentotalvar} and the Generalized Iterative Soft Thresholding (GISTA) in \cite{gista} for which we do not have generic implementations
for CS problems. 


\subsection{Equivalent problems}\label{subset:equiv}
Solvers pdNCG, TFOCS\_unc, TVAL3 and TwIST solve the penalized least squares problem \eqref{prob1}, while TFOCS\_con solves the constrained least squares problem \eqref{probqweqwe4}. 

In our experiments we put significant effort in 
calibrating the parameters $c$ and $\epsilon$ for the penalized and constrained least-squares problems, respectively, 
such that all methods solve similar problems. 
First, we set $\epsilon = \|b - \tilde b\|_2$ in \eqref{probqweqwe4}, where $\tilde b$ is the noiseless sampled signal. Hence problem \eqref{probqweqwe4}
is parameterized with the optimal $\epsilon$. Then we find an approximation of the optimal $c$. By optimal $c$ we mean the value of $c$ for which problems \eqref{prob1} and \eqref{probqweqwe4} 
are equivalent if $\epsilon = \|b - \tilde b\|_2$ and $\mu_{T_2}=0$. Let $\omega$ denote the optimal Lagrange multiplier of \eqref{probqweqwe4}.
If $\epsilon = \|b - \tilde b\|_2$ and $\mu_{T_2}=0$, then it is easy to show that 
for $c:= 2/\omega$ problems \eqref{prob1} and \eqref{probqweqwe4} are equivalent. 

The exact optimal Lagrange multiplier $\omega$ is not known a-priori.
However it can be calculated by solving to high accuracy the dual problem of \eqref{probqweqwe4} with TFOCS by setting $\mu_{T_2}\approx 0$ in \eqref{probqweqwe4}. 
Then we set $c:= 2/\omega$.
If $\tilde{b}$ is not available, then $\epsilon$ is set such that
a visually pleasant solution is obtained.

\subsection{Parameter tuning and hardware}\label{subsec:param}
The parameter $\mu$ of pdNCG is set to $\mu=1.00e$-$5$, which for the problems of our interest resulted 
in solutions with the similar or better accuracy than the compared methods.
The parameter $\eta$ in \eqref{bd59} is set to $1.0e$-$1$, the maximum number of backtracking line-search iterations
is fixed to $10$. Moreover, the backtracking line-search parameters $\tau_1$ and $\tau_2$ in step $5$ of pdNCG (Fig. \ref{fig:2}) are set to $9.0e$-$1$ and $1.0e$-$3$,
respectively.
The constant $\rho$ of the preconditioner in \eqref{bd10} is set 
to $5.0e$-$1$.

For TVAL3 parameter $\beta$ is set to $\beta=2^8$ based on suggestions of its authors in \cite{tval3} and our personal experience. 
Moreover, continuation was enabled in order to enhance the performance of the method.
Any other parameters that were not discussed are set to their default values.

We tune TwIST based on comments/suggestions of its authors and personal experience. In particular, parameter $\lambda$ is set to $\lambda=0.04$
and the maximum number of iterations for the iTV denoising procedure is set to $10$.

The version $1.3.1$ of TFOCS has been used. The termination criterion of TFOCS is by default the relative step-length. The tolerance for this criterion is set to the default 
value, except in cases that certain suggestions are made in TFOCS software package or the corresponding paper \cite{convexTemplates}. 
The default Auslender and Teboulle's single-projection method
is used as a solver for TFOCS. Moreover, as suggested by the authors of TFOCS, appropriate scaling is performed on matrices $A$ and $W$,
such that they have approximately the same Euclidean norms. All other parameters are set to their default values, except in cases
that specific suggestions are made by the authors. Generally, regarding tuning of TFOCS, substantial effort has been made to 
guarantee that problems are not over-solved.

All solvers are MATLAB implementations and all experiments are performed on a MacBook Air running OS X $10.10.1$ with $2$ GHz ($3$ GHz turbo boost) Intel Core Duo i7 processor
using MATLAB R2012a. The cores were working with frequency $2.7$ - $3$ GHz during the experiments and we did not observe any CPU throttling.

\subsection{Termination criteria}
For images we measure the quality of the reconstructed solutions by using the Peak-Signal-to-Noise-Ratio (PSNR) function 
$$
\mbox{PSNR} = 10 \log_{10} \left(\frac{\mbox{peakval}^2}{\mbox{MSE}}\right),
$$
where peakval is the range of the image datatype, in this case the range is one since we work with black and white images, and 
MSE is the mean squared error between the solution and the original noiseless image. For other types of signals we measure the quality
of the reconstructed solutions by measuring their Signal-to-Noise-Ratio (SNR).

We terminate pdNCG, TVAL3 and TwIST when the PSNR (for images) or the SNR (for other types of signals)
of their solution is equal or larger than the PSNR or SNR of the solution
obtained by TFOCS\_unc. This way we make sure that all methods which solve the penalized least-squares problem terminate when a solution of the same
quality as the one of TFOCS\_unc is obtained. As we mentioned in Subsection \ref{subsec:param} when we use TFOCS\_unc we do not over-solve the problem, otherwise, 
we would favour pdNCG, which is a second-order method.
Since pdNCG, TVAL3, TwIST and TFOCS\_unc solve the same problem with the same penalty parameter $c$ then we 
can make a fair comparison of their performance. The solution obtained by TFOCS\_con might differ slightly in terms of PSNR or SNR compared to the ones obtained by pdNCG, TVAL3, TwIST and 
TFOCS\_unc. However, this is because we 
set parameter $c$ to be approximately close to the optimal value that makes the penalized and the constrained problems equivalent.

\subsection{Problems sets}\label{subsec:problemsets}
We compare the solvers pdNCG, TFOCS, TVAL3 and TwIST on image reconstruction problems which are modelled using iTV.
We separate the images to be reconstructed into two sets, which are shown in Figures \ref{problemset1} and \ref{problemset2}. 
Figure \ref{problemset1} includes  some standard images from
the image processing community. There are seven images in total, the house and the peppers, which have $256\times 256$ pixels and Lena, the fingerprint, the boat and Barbara,
which have $512 \times 512$ pixels. Finally, the image Shepp-Logan has variable size depending on the experiment. 
Figure \ref{problemset2} includes images which have been sampled
using a single-pixel camera \cite{singlepixel}. Briefly a single-pixel camera samples random linear projections of pixels
of an image, instead of directly sampling pixels. The problem set can be downloaded from \url{http://dsp.rice.edu/cscamera}.
In this set there are in total five sampled images, the dice, the ball, the mug, the letter R and the logo. Each image has $64\times 64$ pixels. 

Moreover, we present the performance of pdNCG and TFOCS on the recovery of radio-frequency radar tones. This problem has been first demonstrated 
in Subsection $6.5$ of \cite{convexTemplates}. We describe again the setting of the experiment. 
The signal to be reconstructed consists of two radio-frequency radar tones which overlap in time.  The amplitude of the tones differs
by $60$ dB. The carrier frequencies and phases are chosen uniformly at random. Moreover, noise is added such that the larger tone has SNR
$60$ dB and the smaller tone has SNR $2.1e$-$2$ dB.
The signal is sampled at $2^{15}$ points, which corresponds to 
Nyquist sampling rate for bandwidth $2.5$ GHz and time period approximately $6.5e$$\mplus$$3$ ns.
The reconstruction is modelled as a CS problem where matrix $A\in\mathbb{R}^{m\times n}$ 
is block-diagonal with $\pm 1$ for entries, $n=2^{15}$ and $m=2616$, i.e. subsampling ratio $m/n \approx 7.9e$-$3$. Moreover,
$W\in\mathbb{R}^{n\times l}$ is a Gabor frame with $l=915456$. 

\begin{figure}%
\centering
	\begin{subfigure}[b]{0.23\textwidth}
		\includegraphics[width=\textwidth]{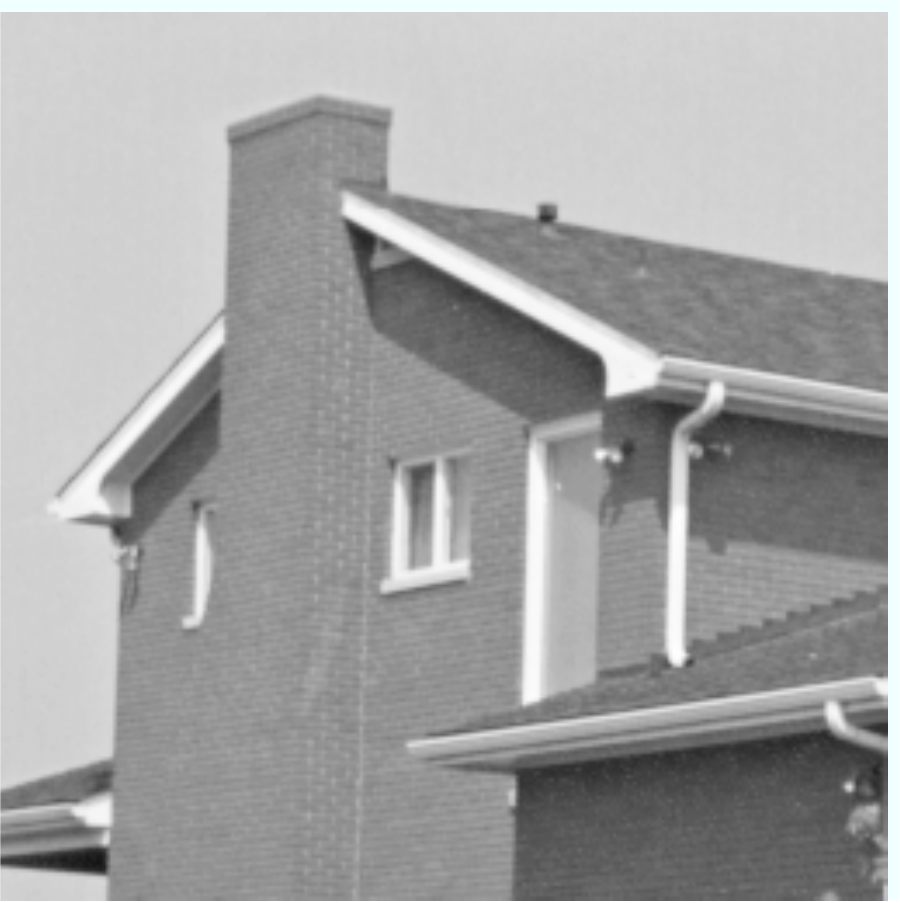}	
		\caption{House: $256^2$}
		\label{House}%
         \end{subfigure}
         \quad
	\begin{subfigure}[b]{0.23\textwidth}
		\includegraphics[width=\textwidth]{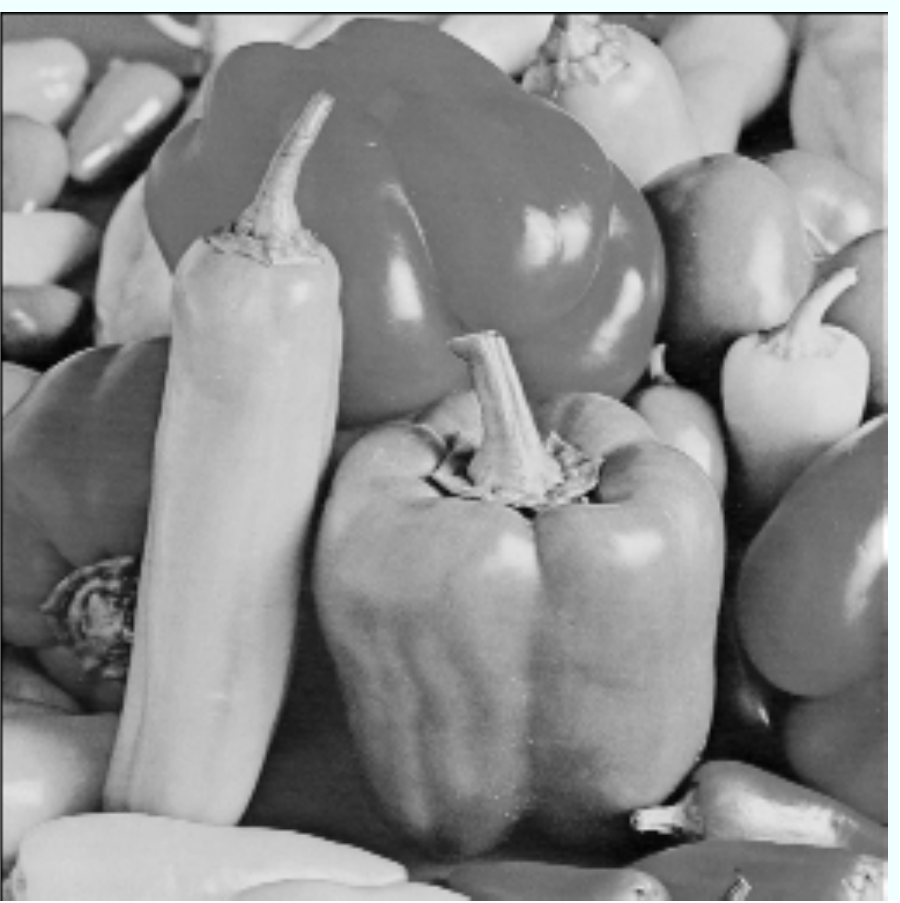}	
		\caption{Peppers: $256^2$}
		\label{Peppers}%
         \end{subfigure}
         \quad
	\begin{subfigure}[b]{0.23\textwidth}
		\includegraphics[width=\textwidth]{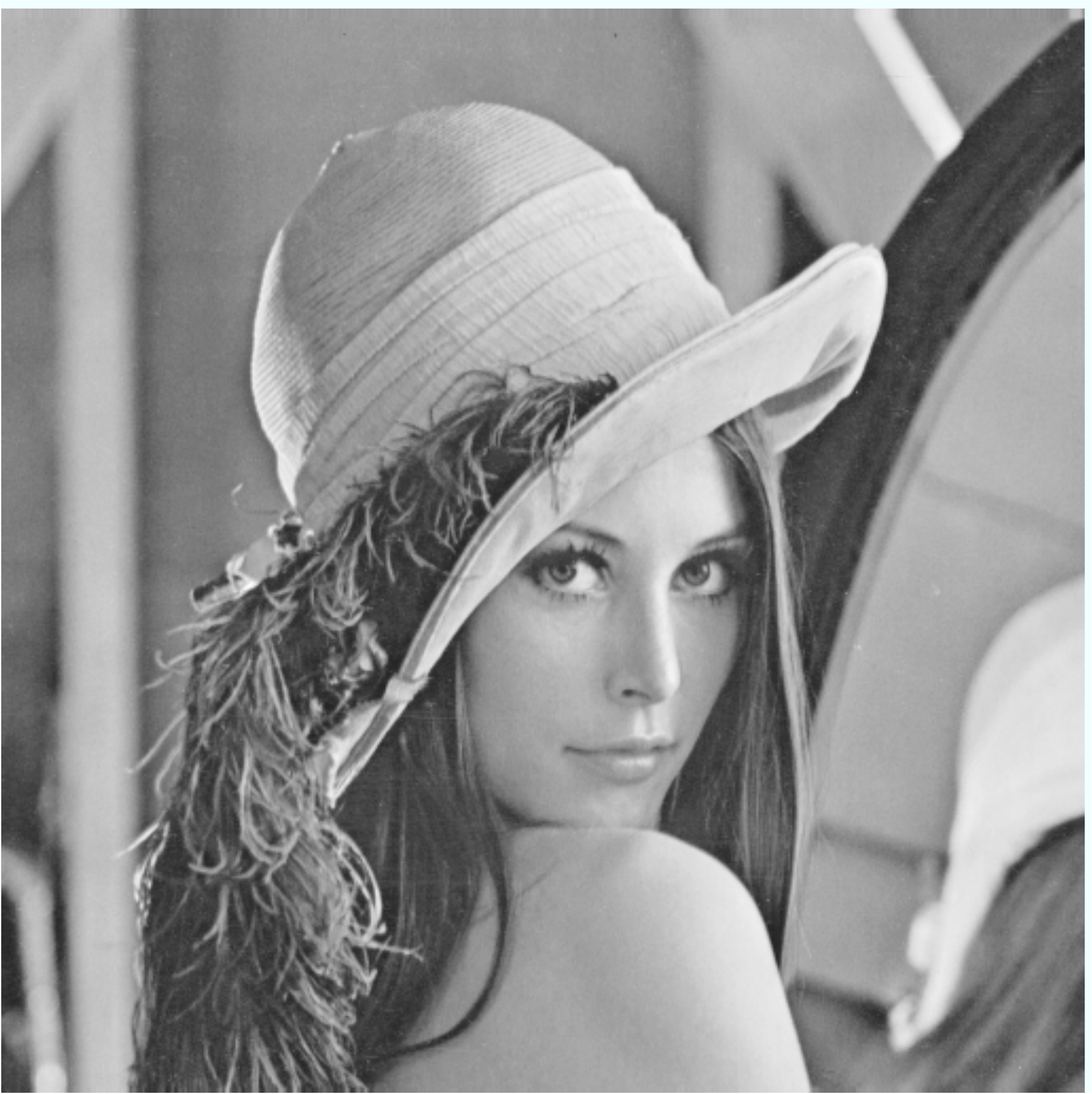}	
		\caption{Lena: $512^2$}
		\label{Lena}%
         \end{subfigure}
         \quad
	\begin{subfigure}[b]{0.23\textwidth}
		\includegraphics[width=\textwidth]{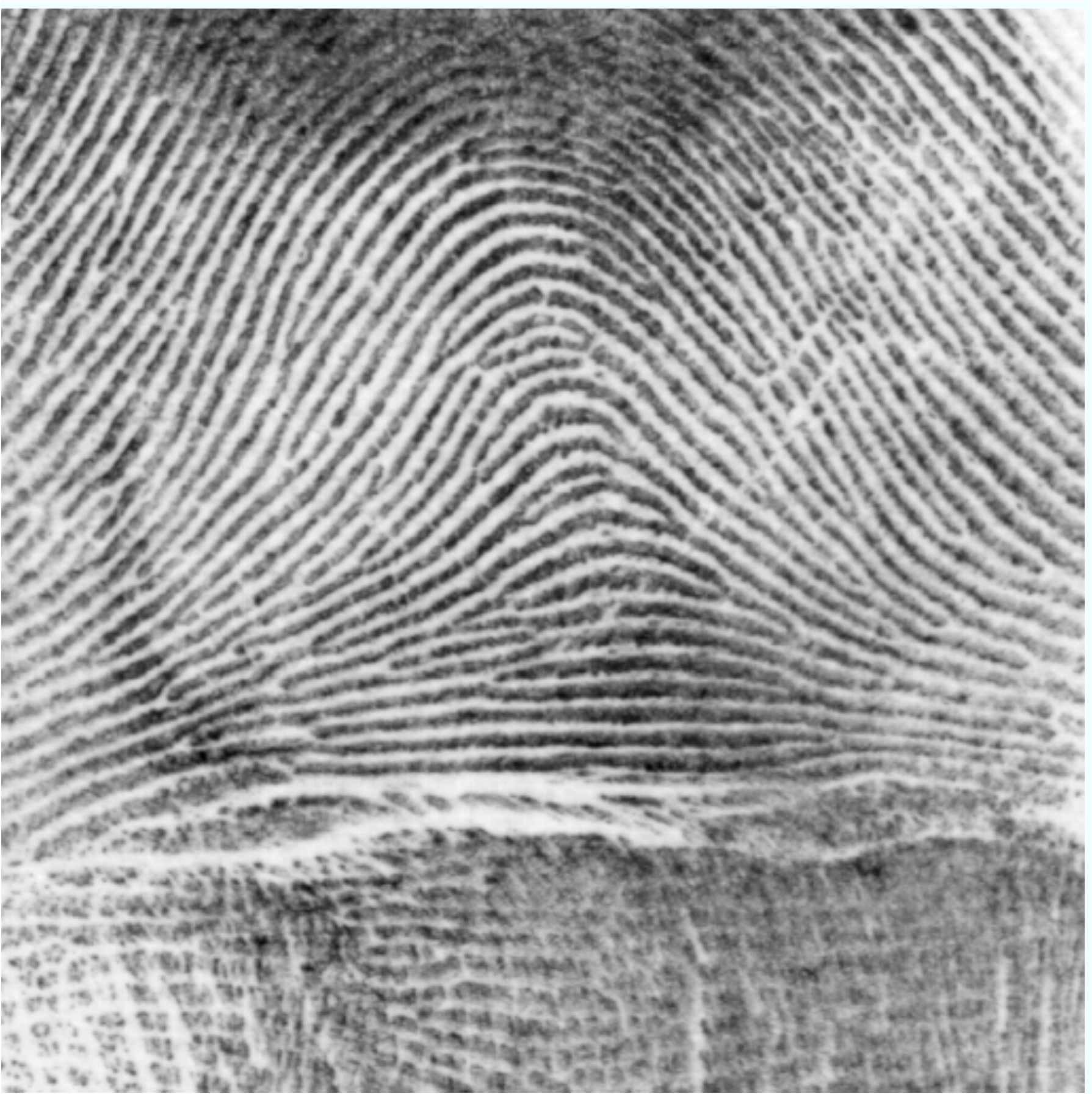}	
		\caption{Fingerprint: $512^2$}
		\label{fingerprint}%
         \end{subfigure}
         \quad
	\begin{subfigure}[b]{0.23\textwidth}
		\includegraphics[width=\textwidth]{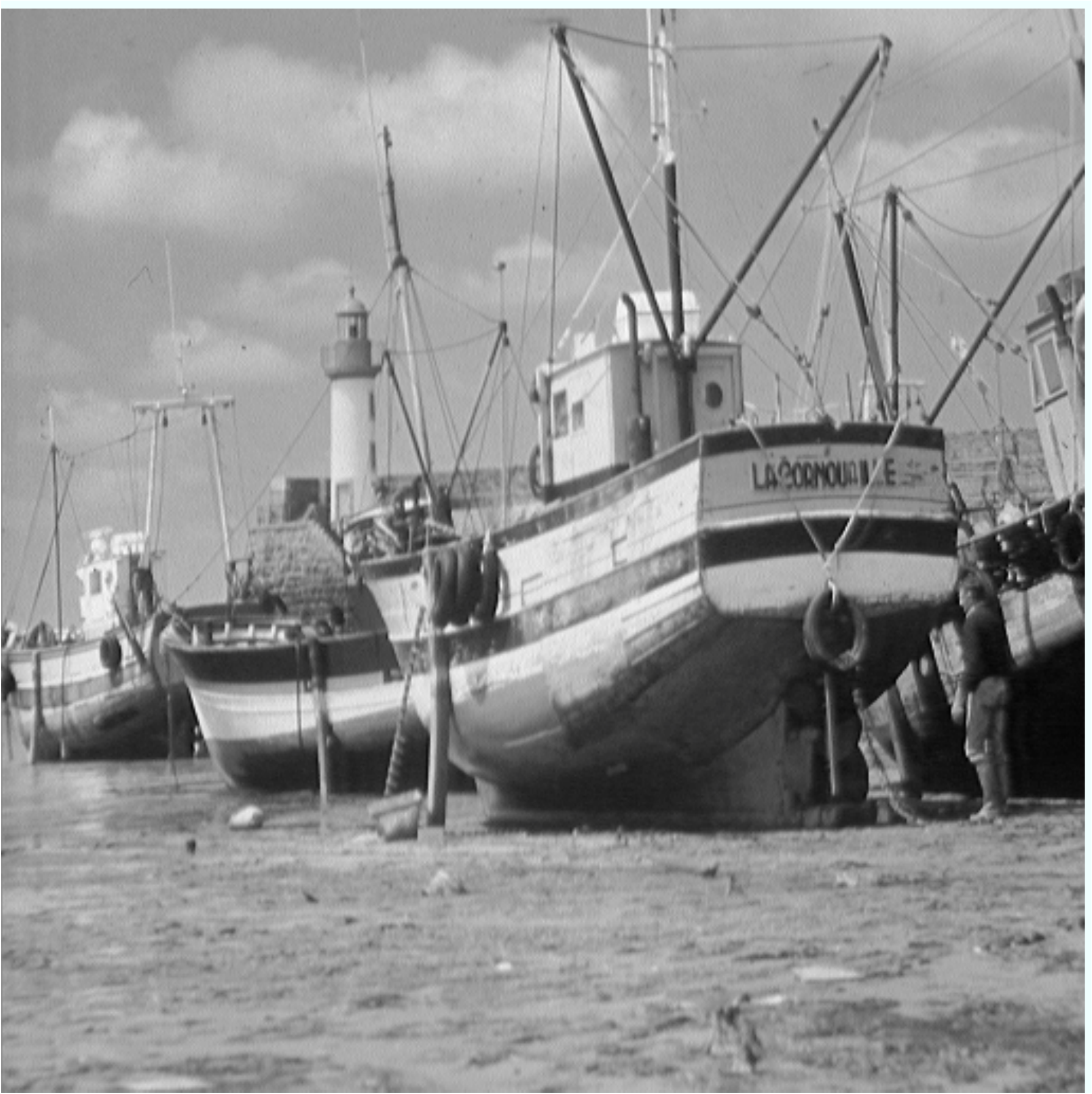}	
		\caption{Boat: $512^2$}
		\label{boat}%
         \end{subfigure}
         \quad
	\begin{subfigure}[b]{0.23\textwidth}
		\includegraphics[width=\textwidth]{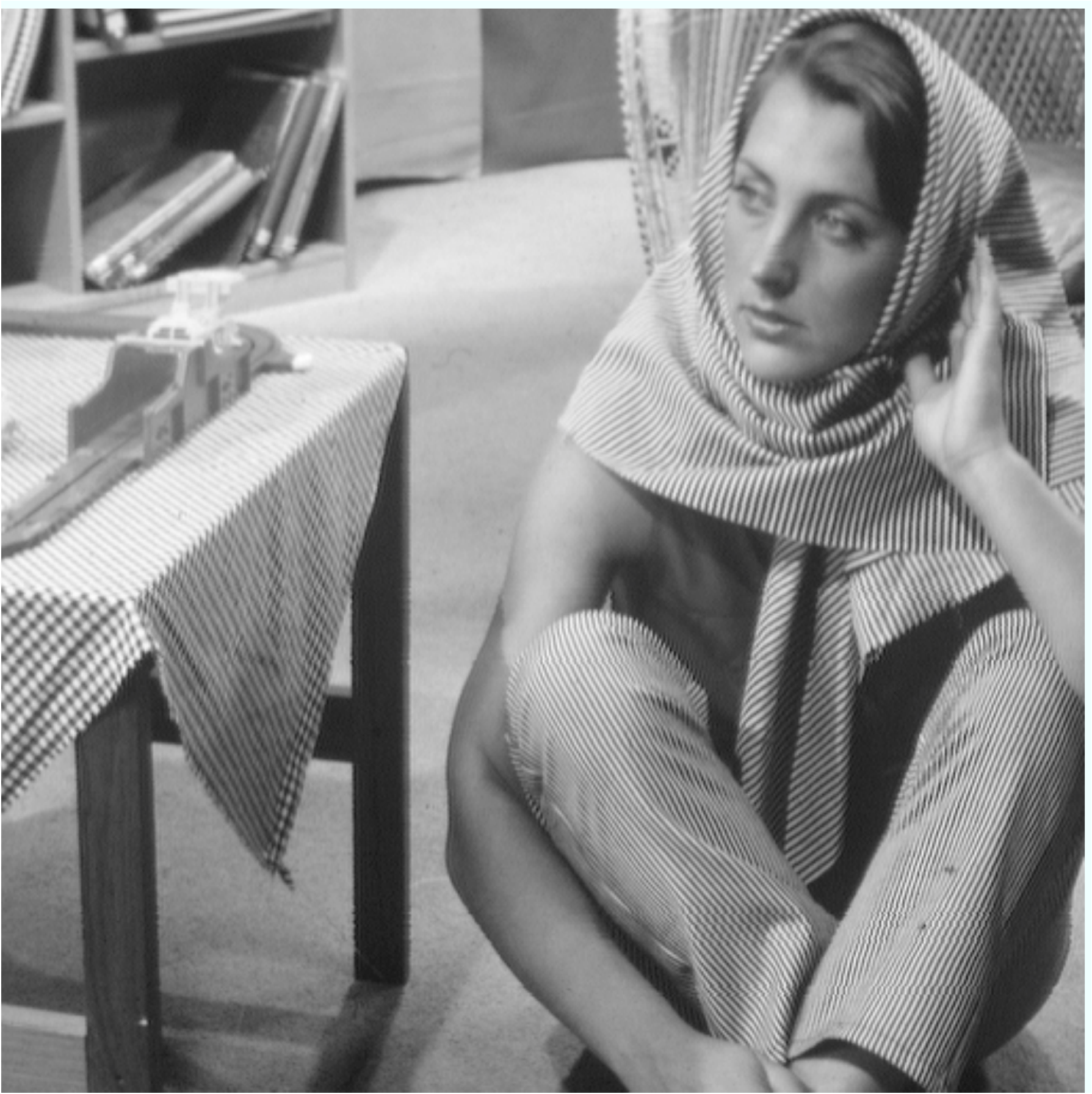}	
		\caption{Barbara: $512^2$}
		\label{barbara}%
         \end{subfigure}
         \quad
	\begin{subfigure}[b]{0.23\textwidth}
		\includegraphics[width=\textwidth]{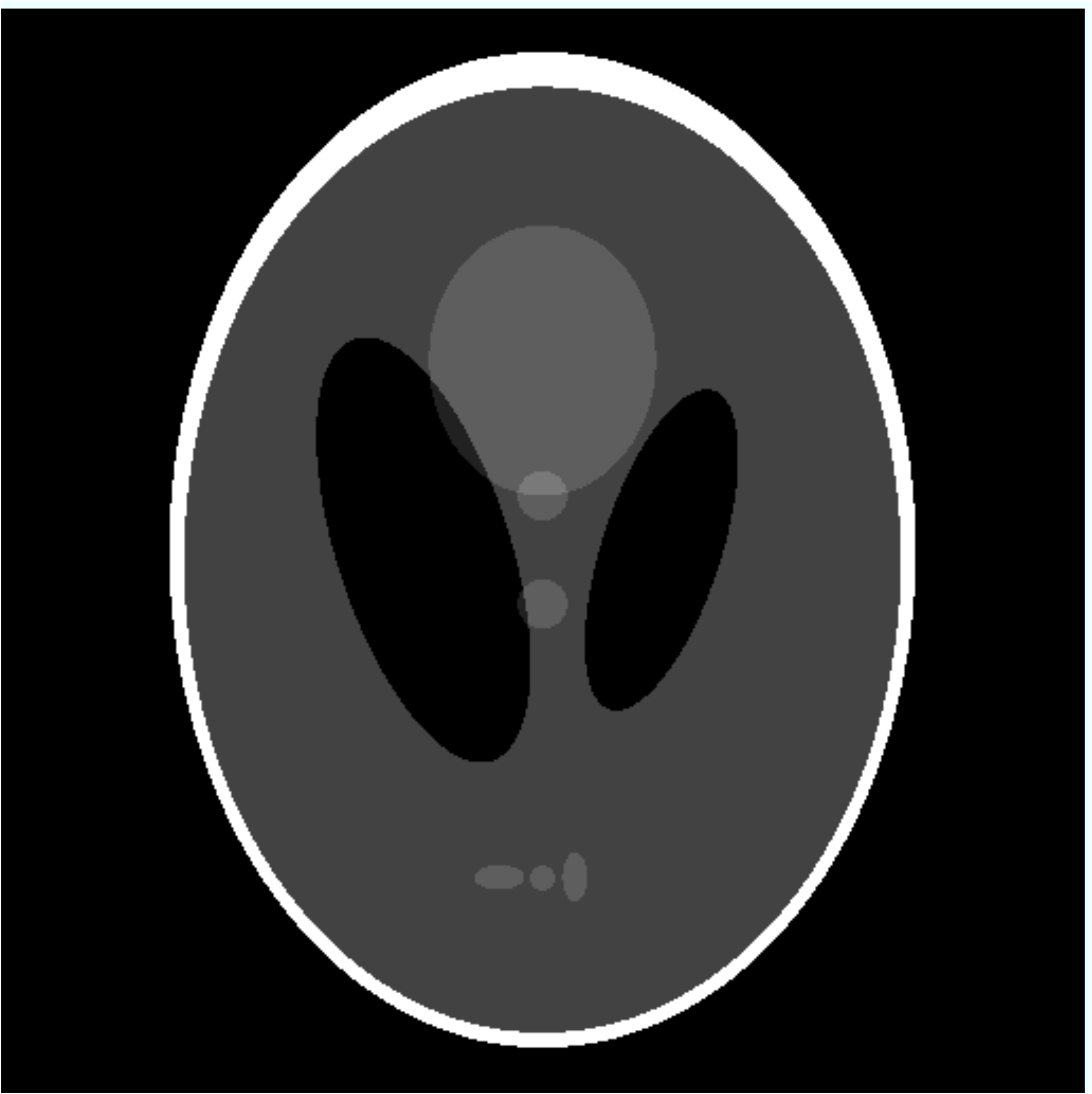}	
		\caption{Shepp-Logan: variable size}
		\label{shepplogan}%
         \end{subfigure}         
	\caption{Benchmark images, the number of pixels for each image is given in the sub-captions . For Figure \ref{shepplogan} the size varies depending on the experiment}
	\label{problemset1}%
\end{figure}

\begin{figure}%
\centering
	\begin{subfigure}[b]{0.23\textwidth}
		\includegraphics[width=\textwidth]{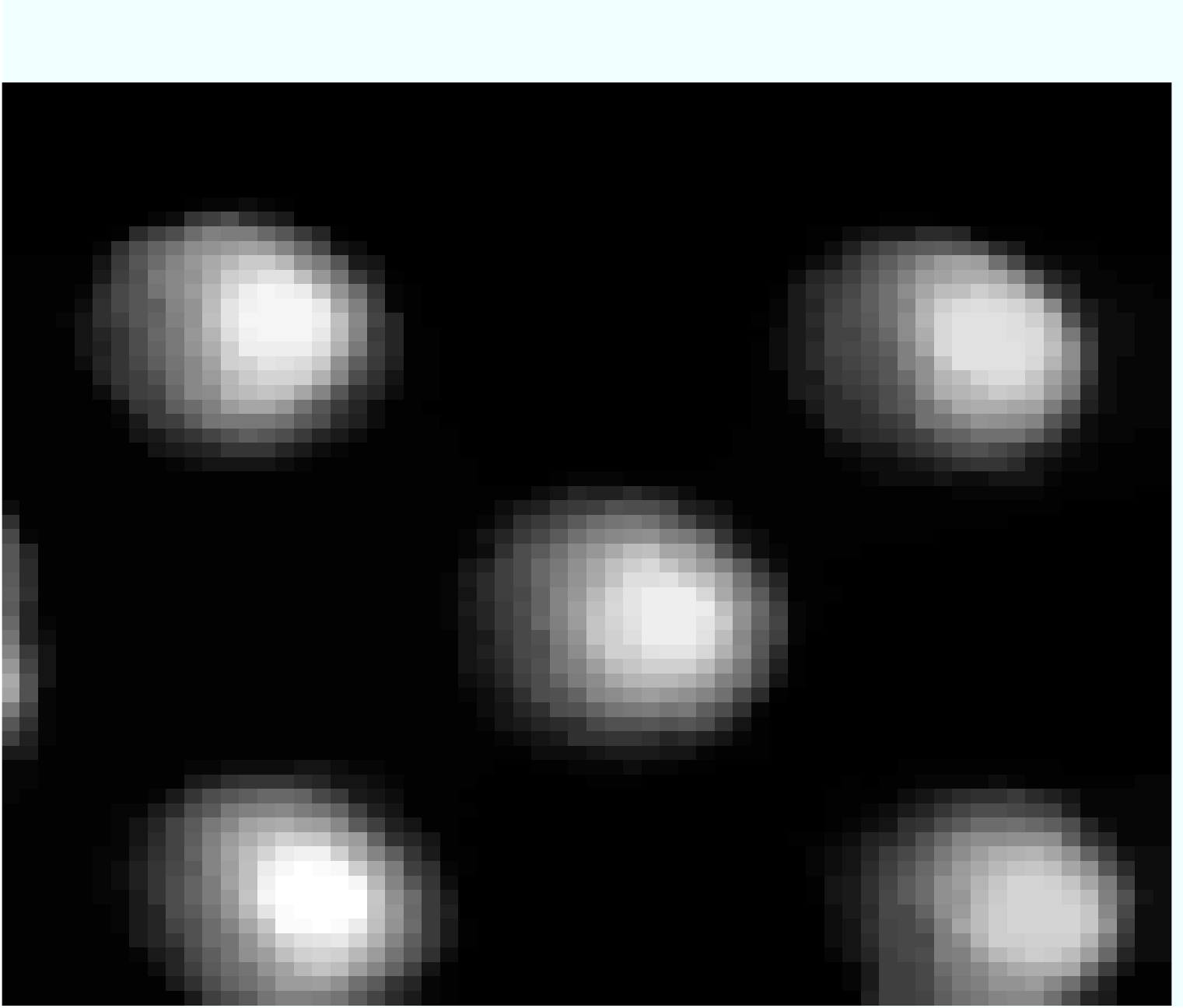}	
		\caption{Dice}
		\label{dice}%
         \end{subfigure}
         \quad
	\begin{subfigure}[b]{0.23\textwidth}
		\includegraphics[width=\textwidth]{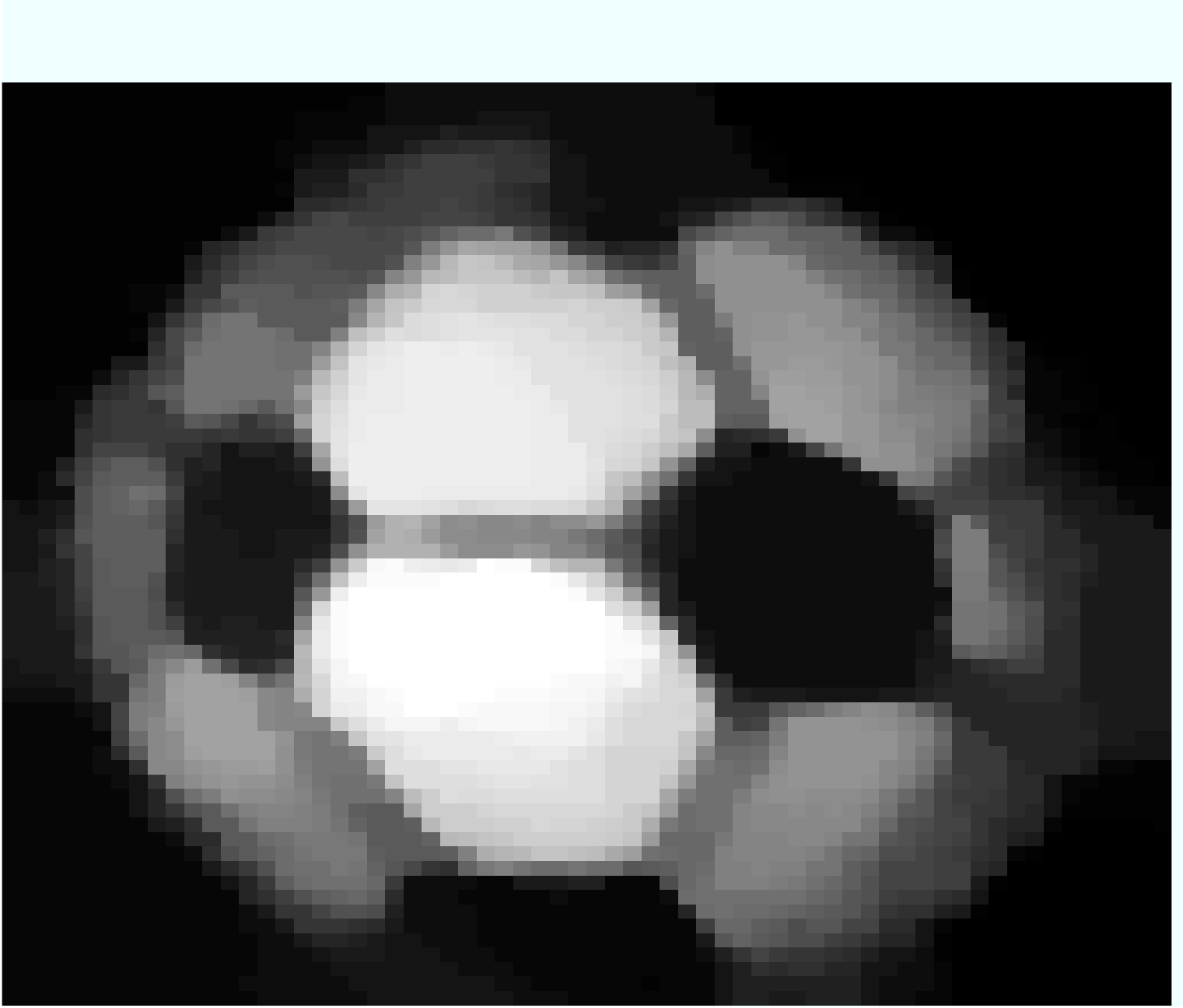}	
		\caption{Ball}
		\label{ball}%
         \end{subfigure}
         \quad
	\begin{subfigure}[b]{0.23\textwidth}
		\includegraphics[width=\textwidth]{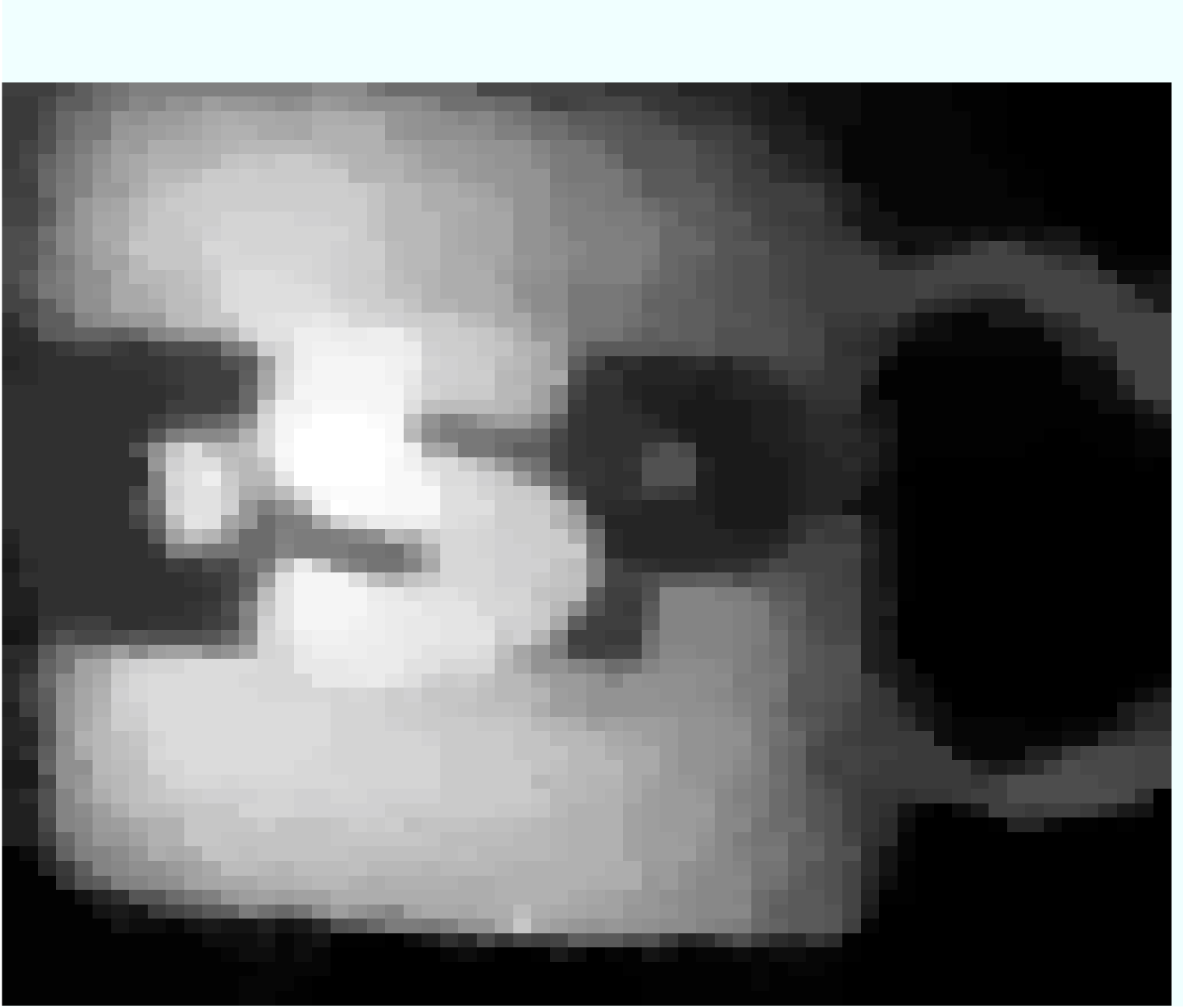}	
		\caption{Cup}
		\label{cup}%
         \end{subfigure}
         \quad
	\begin{subfigure}[b]{0.23\textwidth}
		\includegraphics[width=\textwidth]{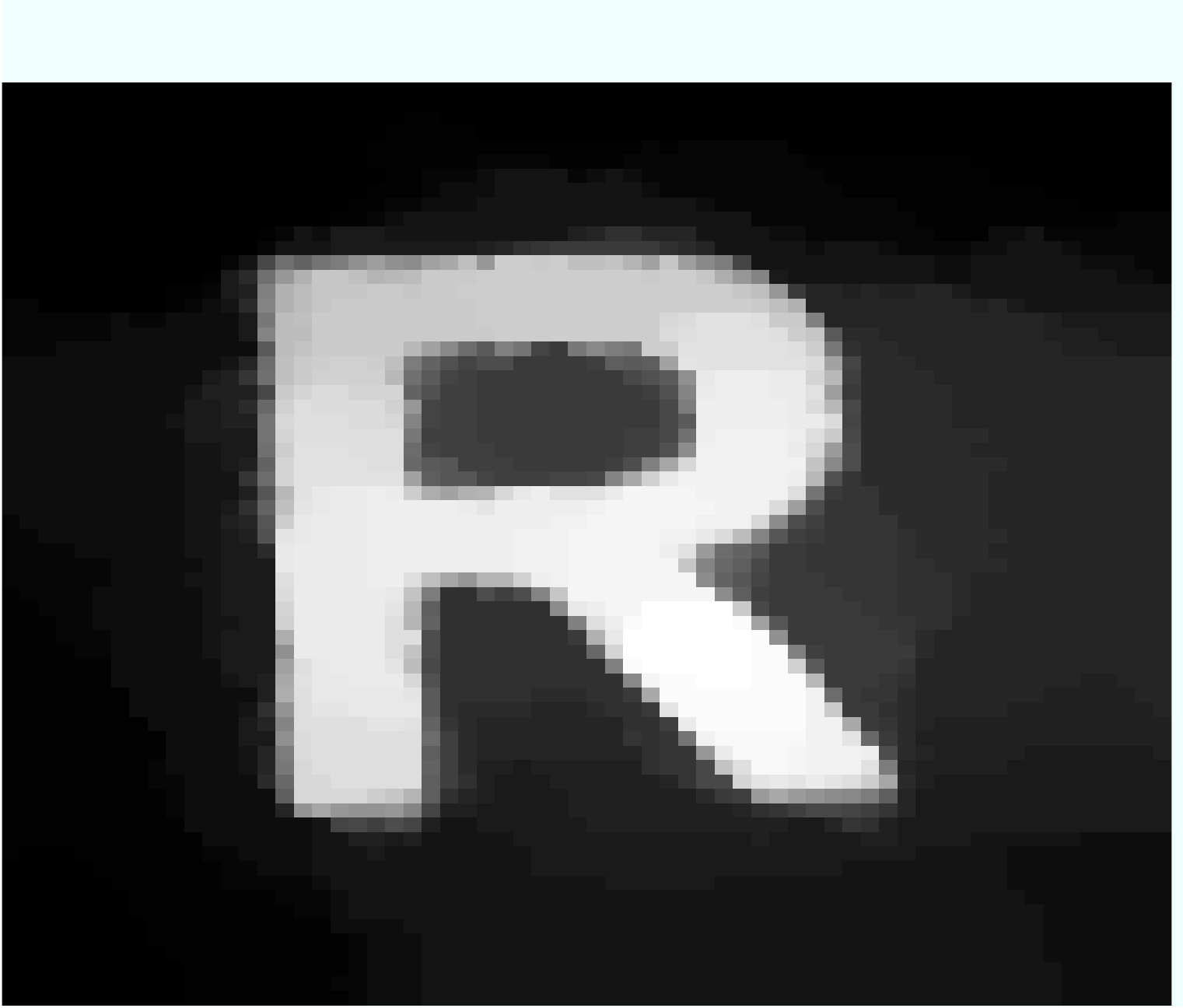}	
		\caption{Letter}
		\label{letter}%
         \end{subfigure}
         \quad
	\begin{subfigure}[b]{0.23\textwidth}
		\includegraphics[width=\textwidth]{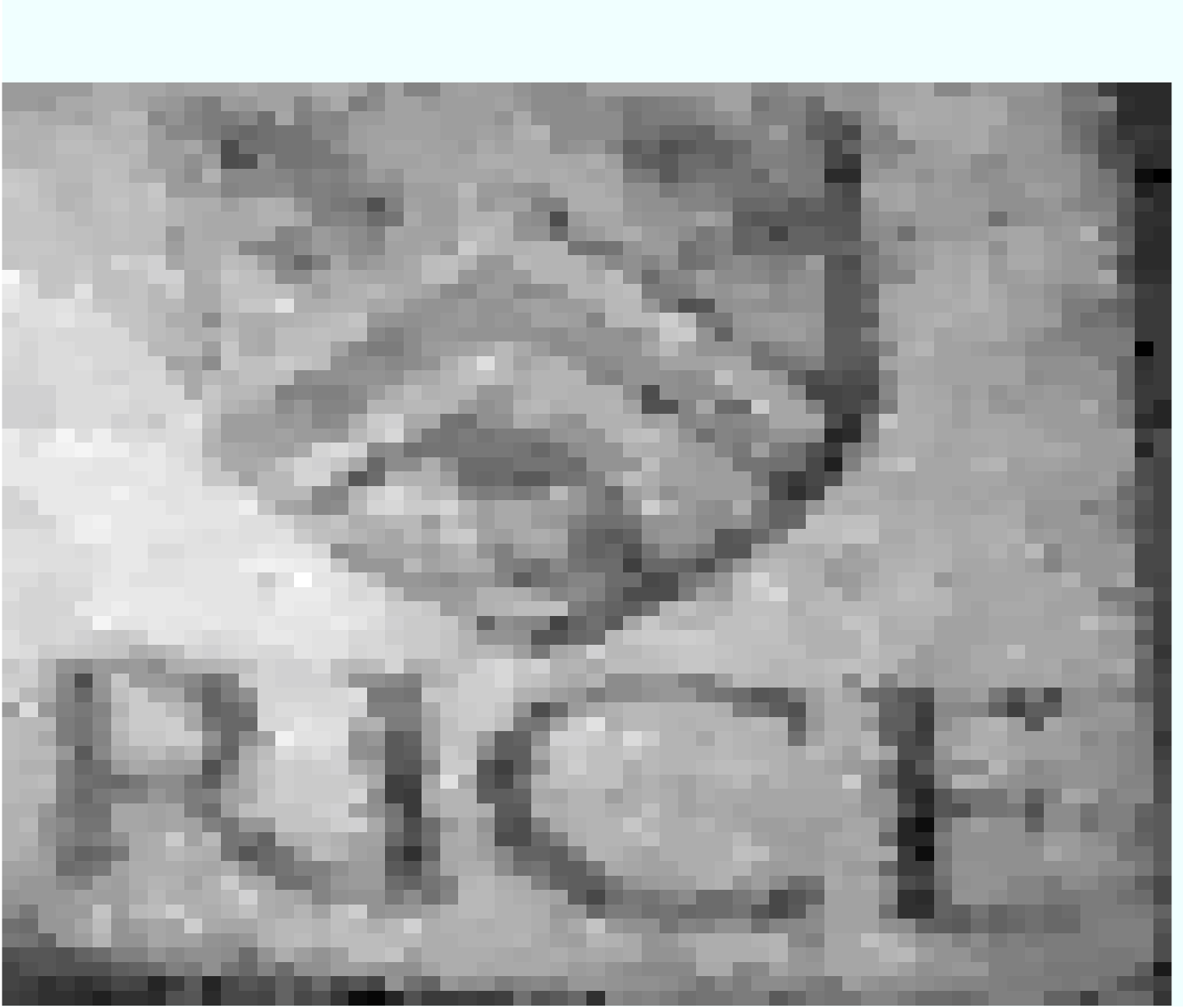}	
		\caption{Logo}
		\label{logo}%
         \end{subfigure}
	\caption{Benchmark images, which were sampled using the single-pixel camera \cite{singlepixel}}
	\label{problemset2}%
\end{figure}

\subsection{Dependence of pdNCG on smoothing parameter}
In this subsection we present the performance of pdNCG with and without preconditioning for decreasing values of the smoothing parameter $\mu$. 
For this experiment we use the images from Figures \ref{House}  to \ref{barbara}. The CS matrix for all experiments is a partial 
Discrete Cosine Transform (DCT) matrix with $m\approx n/4$ and $n$ is equal to the number of pixels of each image in Figure \ref{problemset1}.
For all experiments the sampled signals have PSNR equal to $20$ decibels (dB).

The results of the experiments are shown in Table \ref{table1}. In Table \ref{table1} notice that for the preconditioned case there is always a large increase in CPU time from $\mu=$ $1.0e$-$02$ to $1.0e$-$04$. This is because
for $\mu=$ $1.0e$-$02$ pdNCG relies only on continuation, while for values of $\mu$ equal or smaller than $1.0e$-$04$ preconditioning is necessary
and it is automatically activated using the technique described in Section \ref{sec:cont}. Overall pdNCG had a stable performance with respect to the smoothing parameter $\mu$. 
This is due to the good performance of the proposed preconditioner. Notice that without the preconditioner the performance of pdNCG for $\mu\le$ $1.0e$-$04$ 
worsens noticeably. In particular, for some experiments the unpreconditioned pdNCG required more than $3$ hours of CPU time. For these experiments we forced termination of the method
and we do not report any results. 

\begin{table}
\center
\caption{Performance of pdNCG for decreasing values of the smoothing parameter $\mu$. For this experiment the images from Figures \ref{House} to \ref{barbara} have been used. 
The table shows the CPU time in seconds required for preconditioned pdNCG and unpreconditioned pdNCG for each combination of $\mu$ and problem. PSNR corresponds to the reconstructed solution of pdNCG.}
\begin{tabular}{|c|c|c|c|c|c|c|c|}
\hline
    $\mu$        			     & CG/PCG & House &  Peppers & Lena & Fingerprint & Boat & Barbara  \\ \hline \hline
\multirow{ 3}{*}{$1.0e$-$02$} &PCG        & 4         & 4              & 22     & 19              & 23     & 19 \\
					     &CG           & 4         & 4      	       & 22     & 19              & 22     & 19 \\ 
					     &PSNR      & 19.2    & 24.4         & 25.6  & 18.1           & 24     & 22.6 \\ \hline \hline
\multirow{ 3}{*}{$1.0e$-$04$} &PCG        &  8        & 9              & 40     & 136            & 50     & 42  \\
					     &CG           & 10       & 11            & 85     & 155            & 85     & 57  \\ 
					     &PSNR      & 19.2    & 24.4         & 25.6  & 18.1           & 24     & 22.6 \\ \hline \hline
\multirow{ 3}{*}{$1.0e$-$07$} &PCG        &  10      & 10           &56       & 203            & 57     &85  \\
					     &CG           & 82       & 98           &1165   & 2839          & 1519 &852  \\ 
					     &PSNR      & 19.3    & 24.4        & 25.6   & 18.1           & 24     & 22.6 \\ \hline \hline
\multirow{ 3}{*}{$1.0e$-$10$} &PCG        & 11       & 12           &73       & 182            & 76     &86 \\
					     &CG           & 273     & 195         &3484   & -		       & 3208 &3508 \\ 
					     &PSNR      & 19.3    & 24.4        & 25.6   & 18.1           & 24     &22.6 \\ \hline \hline
\multirow{ 3}{*}{$1.0e$-$13$} &PCG        &  11      & 12           &66       & 232            & 68     &85 \\ 
					     &CG          & 265     & 242         &4834   & - 		       & 5356 & - \\ 
					     &PSNR      & 19.3    & 24.4        & 25.6   & 18.1           & 24     & 22.6 \\
\hline
\end{tabular}
\label{table1}
\end{table}

\subsection{Dependence on problem size}\label{subsec:scaling}
We now present the performance of methods pdNCG, TFOCS, TVAL3 and TwIST as the size of the problem $n$ increases. 
The image from Figure \ref{shepplogan} has been used for this experiment. 
Again, the CS matrix for all experiments is a partial 
Discrete Cosine Transform (DCT) matrix with $m\approx n/4$.
The sampled signals have PSNR equal to $20$ dB.

The results of this experiment are shown in Table \ref{table2}.
Observe that all methods exhibit a linear-like increase in CPU time as a function of the size of the problem. 
We denote with bold the problem for which pdNCG was the fastest method. 

In this table we present 
the PSNR of the recovered solutions for each solver. Notice that TVAL3 does not converge always to a solution of similar PSNR as the other solvers. Although, 
we put significant effort to tune its parameters. A similar performance of TVAL3 is observed for many of the experiments in the subsequent subsections. 
Moreover, observe that TwIST was much slower than the other methods and it did not converge to a solution of similar PSNR to TFOCS or pdNCG for all experiments.
Similar performance for TwIST has been observed in \cite{tval3}.

\begin{table}
\center
\caption{Performance of pdNCG, TFOCS, TVAL3 and TwIST for increasing problem size. The image Shepp-Logan from Figure \ref{shepplogan} has been used for this experiment.
The table shows the required CPU time and the PSNR of the recovered solutions for each solver.}
\begin{tabular}{|c|c|c|c|c|c|c|}
\hline
    Solver        			      & n=  & $64^2$ &  $128^2$ & $256^2$ & $512^2$ & $1024^2$ \\ \hline \hline
\multirow{ 2}{*}{TFOCS\_con} & CPU time (sec)   &16      & 23        & 55       & 264     & 1034 	\\  
		      			      &PSNR  		    &15.8  & 17.4      & 17.9    & 18       & 18 	\\ \hline \hline
\multirow{ 2}{*}{TFOCS\_unc} & CPU time (sec)   &19      & 30        & 79       & 385     & 1477 \\
					      & PSNR 		    &15.8  & 17.4      & 17.9    & 18       & 18 \\ \hline \hline
\multirow{ 2}{*}{TVAL3} 	      & CPU time (sec)   &1       & 2           & 250     & 1365   & 4843 \\
				              & PSNR 		    &15.8  & 17.4      & 17.2    & 17.2    & 17.3\\\hline \hline
\multirow{ 2}{*}{TwIST} 	      & CPU time (sec)   &28       & 135      & 259     & 2149   & 7223 \\
				              & PSNR 		    &13.5  & 15.9      & 16.8    & 16.9    & 16.9\\\hline \hline
\multirow{ 2}{*}{pdNCG}          &CPU time (sec)    &{2}     & {6 }       & \textbf{13}     & \textbf{62}     &  \textbf{237} \\ 
	          			      &PSNR  		    &15.8  & 17.4     & 17.9     & 18       &  18 \\ 
\hline
\end{tabular}
\label{table2}
\end{table}

\subsection{Dependence on the level of noise}
In this experiment we compare the solvers pdNCG, TFOCS and TVAL3 as the level of noise increases. 
We exclude TwIST from this and subsequent experiments due to its poor performance on the simple 
synthetic experiment reported in Subsection \ref{subsec:scaling}; similar performance has been observed in \cite{tval3}.
For this experiment we use the images from Figures \ref{House}  to \ref{barbara}. The CS matrix for all experiments is a partial 
Discrete Cosine Transform (DCT) matrix with $m\approx n/4$.

In Table \ref{table3} we present the performance of the methods for this experiments. In the second column of Table \ref{table3} the PSNR is shown, which is decreasing from $95$ dB to $20$ dB in six steps. The rest of the table shows
the CPU time, which was required by each solver. Overall pdNCG has good performance for problems with large level
of noise, i.e., PSNR equal to $20$ dB. We denote with bold the problems for which pdNCG was the fastest solver. 
In this table we use the star superscript to denote solvers, which solve the unconstrained problem \eqref{prob1} but do not converge 
to a solution of equal or larger PSNR than the solutions of TFOCS\_unc.

In Table \ref{table3_2} we show the PSNR for the solutions calculated by TFOCS\_con for the corresponding experiments in Table \ref{table3}. 
For experiments that are not denoted with a star superscript in Table \ref{table3} the solvers pdNCG, TFOCS\_unc and TVAL3 obtained solutions 
of similar PSNR due to our setting described in Subsection \ref{subset:equiv}.

\begin{table}
\center
\caption{Performance of pdNCG, TFOCS and TVAL3 for increasing level of noise (decreasing PSNR). PSNR is measured in dB. 
 For this experiment the images from Figures \ref{House} to \ref{barbara} have been used. 
The table shows the CPU time in seconds required by each solver. By the star superscript we denote methods that failed to converge to
a similar solution as the other solvers.}
\begin{tabular}{|c|c|c|c|c|c|c|c|}
\hline
    Solver        & PSNR 				   & House          &  Peppers  & Lena                & Fingerprint & Boat   & Barbara  \\ \hline \hline
\multirow{6}{*}{TFOCS\_con}    &    $95$     & $55$  		  & $55$            & $262$          & $261$        & $263$ & $263$	\\  
 		     				&    $80$     & $55$  		  & $55$            & $261$          & $263$  	   & $265$ &$262$ 	\\  
 		     				&    $65$     & $55$  		  & $55$            & $262$          & $264$  	   & $265$ & $262$ 	\\  
 		    				 &    $50$    & $55$  		  & $55$            & $262$          & $262$  	   & $262$ &$262$ 	\\  
 		     				&     $35$    & $55$  		  & $55$            & $261$          & $262$  	   & $262$ & $262$	\\  
 		     				&     $20$    & $54$  		  & $55$            & $263$          & $262$  	   & $262$ &$261$ 	\\ \hline \hline
\multirow{6}{*}{TFOCS\_unc}    &    $95$      &$76$  		  & $76$            & $382$          & $381$  	   & $390$ & $382$ \\ 
 		     				&    $80$     & $76$  		  & $76$            & $383$          & $385$  	   & $388$ & $381$	\\  
 		     				&    $65$     & $76$  		  & $76$            & $384$          & $399$  	   & $386$ & $381$	\\  
 		     				&    $50$     & $75$  		  & $76$            & $383$          & $383$  	   & $382$ & $381$	\\  
 		     				&    $35$     & $76$  		  & $76$            & $382$          & $382$  	   & $383$ & $382$	\\  
 		     				&    $20$     & $76$  		  & $76$            & $382$          & $379$  	   & $380$ &$379$ 	\\ \hline \hline																							
\multirow{6}{*}{TVAL3}              &    $95$     & $3$     		  & $3$              & $16$            & $10$    	   & $38$   & $55$\\
 		     				&    $80$     & $3$   		  & $3$              & $16$            & $10$    	   & $38$   & $55$ 	\\  
 		     				&    $65$     & $3$   		  & $4$              & $10$            & $10$    	   & $37$   & $56$	\\  
 		     				&    $50$     & $3$  		  & $3$              & $383$           &$10$    	   & $24$   & $44$	\\  
 		     				&    $35$     & $3$     	  & $3$              & $1104^*$      & $8$     	   & $36$   & $1067^*$	\\  
 		     				&    $20$     & $184^*$        & $185^*$        & $999^*$       & $8$     	   & $10$   & $ 987^*$	\\  \hline \hline	
\multirow{6}{*}{pdNCG}             &     $95$     &$20$  		  & $82$             & $145$          & $141$  	   & $182$ & $209$ \\ 
 		     				&    $80$     & $20$  		  & $82$             & $146$          & $142$  	   & $182$ &$208$ 	\\  
 		     				&    $65$     & $20$  	 	  & $243$           & $145$          & $142$  	   & $178$ & $209$ 	\\  
 		     				&    $50$     & $20$             & $55$             & \textbf{169}  & $142$  	   & $116$ & $209$	\\  
 		     				&    $35$     & $13$             & $38$             & \textbf{109}  & $138$  	   & $111$  & \textbf{202}	\\  
 		     				&    $20$     & \textbf{16}     & \textbf{16}     & \textbf{101} & $156$   	   & $103$ & \textbf{105} 	\\  
\hline
\end{tabular}
\label{table3}
\end{table}

\begin{table}
\center
\caption{This table shows the PSNR for the solutions calculated by TFOCS\_con for the corresponding experiments in Table \ref{table3}
 For this experiment the images from Figures \ref{House} to \ref{barbara} have been used.}
\begin{tabular}{|c|c|c|c|c|c|c|c|}
\hline
    Solver        				& PSNR 	   & House           &  Peppers        & Lena           & Fingerprint       & Boat    & Barbara  \\ \hline \hline
\multirow{6}{*}{TFOCS\_con}    &    $95$     & $20$  		  & $31.9$            & $29.5$          & $20.1$           & $27.6$ & $25$	\\  
 		     				&    $80$     & $20$  		  & $31.9$            & $29.5$          & $20.1$  	   & $27.6$ &$25$ 	\\  
 		     				&    $65$     & $20$  		  & $31.8$            & $29.5$          & $20.1$  	   & $27.6$ & $25$ 	\\  
 		    				 &    $50$    & $20$  		  & $31.6$            & $29.5$          & $20.1$  	   & $27.5$ &$24.9$ 	\\  
 		     				&     $35$    & $19.9$          & $29.7$            & $28.6$          & $19.7$  	   & $26.9$ & $24.7$	\\  
 		     				&     $20$    & $19.2$          & $24.3$            & $25.6$          & $17.9$  	   & $24$    &$22.6$ 	\\
\hline
\end{tabular}
\label{table3_2}
\end{table}

\subsection{Dependence on number of measurements}
In this experiment we compare the three methods for a decreasing number of measurements $m$. 
For this experiment we use the images from Figures \ref{House}  to \ref{barbara}. The CS matrix is a partial 
Discrete Cosine Transform (DCT) matrix. For all experiments the sampled signals have PSNR equal to $20$ dB.

The results of this experiment are shown in Table \ref{table4}. We denote with bold the problems for which pdNCG
was the fastest method. In Table \ref{table4_2} we show the PSNR for the solutions calculated by TFOCS\_con for the corresponding experiments in Table \ref{table4}. 
For experiments that are not denoted with a star superscript in Table \ref{table4} the solvers pdNCG, TFOCS\_unc and TVAL3 obtained solutions 
of similar PSNR due to our setting described in Subsection \ref{subset:equiv}.

\begin{table}
\center
\caption{Performance of pdNCG, TFOCS and TVAL3 for decreasing number of measurements $m$. 
In the second column the percentage of measurements is shown, for example $75\%$ means that $m\approx 3n/4$,
where $n$ is the number of pixels in the image to be reconstructed. 
 For this experiment the images from Figures \ref{House} to \ref{barbara} have been used. 
The table shows the CPU time in seconds required by each solver. By the star superscript we denote methods that failed to converge to
a similar solution as the other solvers.}
\begin{tabular}{|c|c|c|c|c|c|c|c|}
\hline
    Solver        & m 					     & House           &  Peppers  & Lena         & Fingerprint  & Boat 		& Barbara  \\ \hline \hline
\multirow{3}{*}{TFOCS\_con}    &    75\%       & 58        	    & 58             & 273      	   & 272  	         & 272 		& 273	\\  
 		     				&    50\%       & 56       	    & 56   	        & 268 	   & 266  	 	 & 268 		&270 	\\  
 		     				&    25\%       & 54       	    & 55   	       & 263 	   & 261  		 & 265 		&263 	\\  \hline \hline
\multirow{3}{*}{TFOCS\_unc}    &    75\%       &78                   & 78   	       & 392 	   & 388  	         & 393 		& 396 \\ 
 		     				&    50\%      & 77                  & 77   	       & 389            & 383 	  	 & 387 		& 389	\\  
 		     				&    25\%      & 76                  & 76   	       & 382            & 378  		  & 386 		 & 380	\\  \hline \hline																						
\multirow{3}{*}{TVAL3}              &    75\%       &$296^*$         & $301^*$    &$1584^*$    & $1587^*$     & $1497^*$    	 & $1250^*$   \\
 		     				&    50\%      & $262^*$        & $253^*$    & 8  		   & 7          	  & 7    		 &10 	\\  
 		     				&    25\%      & $184^*$        & $184^*$    & $998^*$      & 8    		  & 10    		 & $986^*$	\\  \hline \hline	
\multirow{3}{*}{pdNCG}             &     75\%     &\textbf{23}      & \textbf{23} & \textbf{132} & \textbf{134}  & \textbf{136}  & \textbf{140} \\ 
 		     				&    50\%      & \textbf{21}     & \textbf{21} & 131  	    & 162  		   & 135             &137 	\\  
 		     				&    25\%      & \textbf{16}     & \textbf{16} & \textbf{101} & 156  		   & 104             & \textbf{105} 	\\ 
\hline
\end{tabular}
\label{table4}
\end{table}

\begin{table}
\center
\caption{This table shows the PSNR for the solutions calculated by TFOCS\_con  for the corresponding experiments in Table \ref{table4}.
 For this experiment the images from Figures \ref{House} to \ref{barbara} have been used.}
\begin{tabular}{|c|c|c|c|c|c|c|c|}
\hline
    Solver        & m 					     & House           &  Peppers  & Lena         & Fingerprint  & Boat 		& Barbara  \\ \hline \hline
\multirow{3}{*}{TFOCS\_con}    &    75\%       & $19.6$           & $27.9$     & $27.2$       & $19.9$  	 & $25.9$ 		& $24.8$	\\  
 		     				&    50\%       & $19.5$          & $26.6$      & $26.7$ 	   & $19.2$  	 & $25.2$ 		&$23.8$ 	\\  
 		     				&    25\%       & $19.2$          & $24.3$      & $25.6$ 	   & $17.9$  	 & $24$ 		&$22.6$ 	\\ 
\hline
\end{tabular}
\label{table4_2}
\end{table}

\subsection{Single-pixel camera}
We now compare TFOCS with pdNCG on realistic image reconstruction problems where the data have been sampled
using a single-pixel camera \cite{singlepixel}. In this experiment we compare our solver only with TFOCS\_con. This is because 
in all previous experiments TFOCS\_con was faster than TFOCS\_unc. Additionally, we were not able to make TVAL3 
to converge to a solution which was as visually pleasant as the solutions obtained by TFOCS\_con and pdNCG. We believe
that this is due to the different CS matrix $A$ in these experiments. In particular, matrix $A\in\mathbb{R}^{m\times n}$, where $n=64^2$ and $m\approx 0.4 n$, 
is a partial Walsh basis which takes values $0/1$ instead of $\pm 1$. We noticed that this matrix $A$ does not satisfy the RIP property in Definition \ref{def:1}
with small $\delta_q$. Therefore, the least squares term in problem \eqref{prob1} might be ill-conditioned and this causes difficulties for TVAL3.

Moreover the optimal solutions are unknown and additionally the level of noise is unknown. 
Hence the reconstructed images can only be compared 
by visual inspection.
For all four experiments $40\%$ of measurements are selected uniformly at random. 

The reconstructed images by the solvers TFOCS\_con and pdNCG are presented in Figure \ref{figcscamera}. Solver pdNCG was faster on four out of five problems.
On problems that pdNCG was faster it required on average $1.5$ times less CPU time. Although it would be possible to tune pdNCG such that it is faster on all problems, we
preferred to use its (simple) default tuning in order to avoid a biased comparison. 
\begin{figure}%
\centering
	\begin{subfigure}[b]{0.26\textwidth}
		\includegraphics[width=\textwidth]{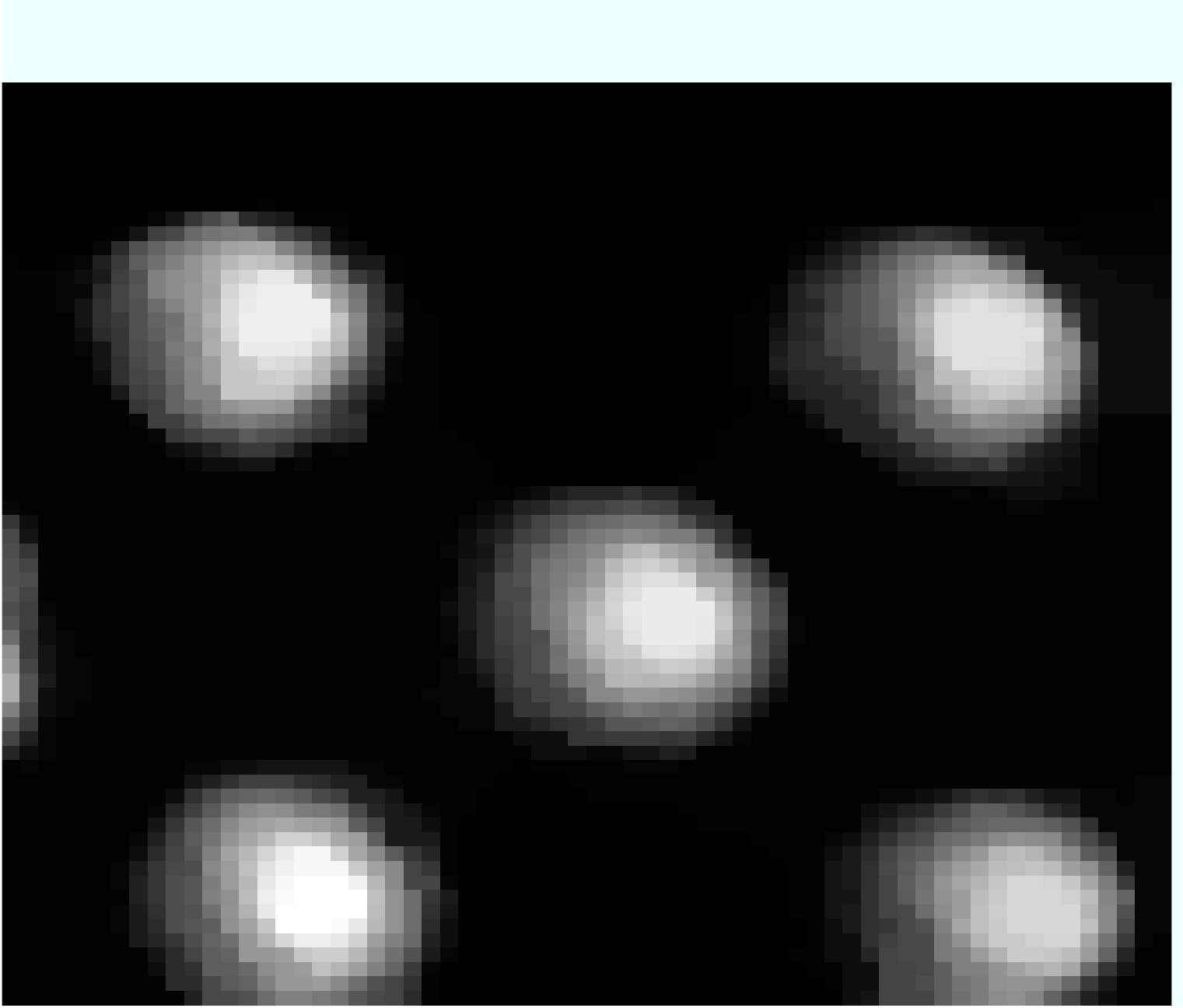}		
		\caption{TFOCS\_con, $25$ sec.}
		\label{figcscamera_a}%
         \end{subfigure}
         \quad
	\begin{subfigure}[b]{0.26\textwidth}
		\includegraphics[width=\textwidth]{fig7_3_b}	
		\caption{pdNCG, $7$ sec.}
		\label{figcscamera_b}%
         \end{subfigure}
         \quad
	\begin{subfigure}[b]{0.26\textwidth}
		\includegraphics[width=\textwidth]{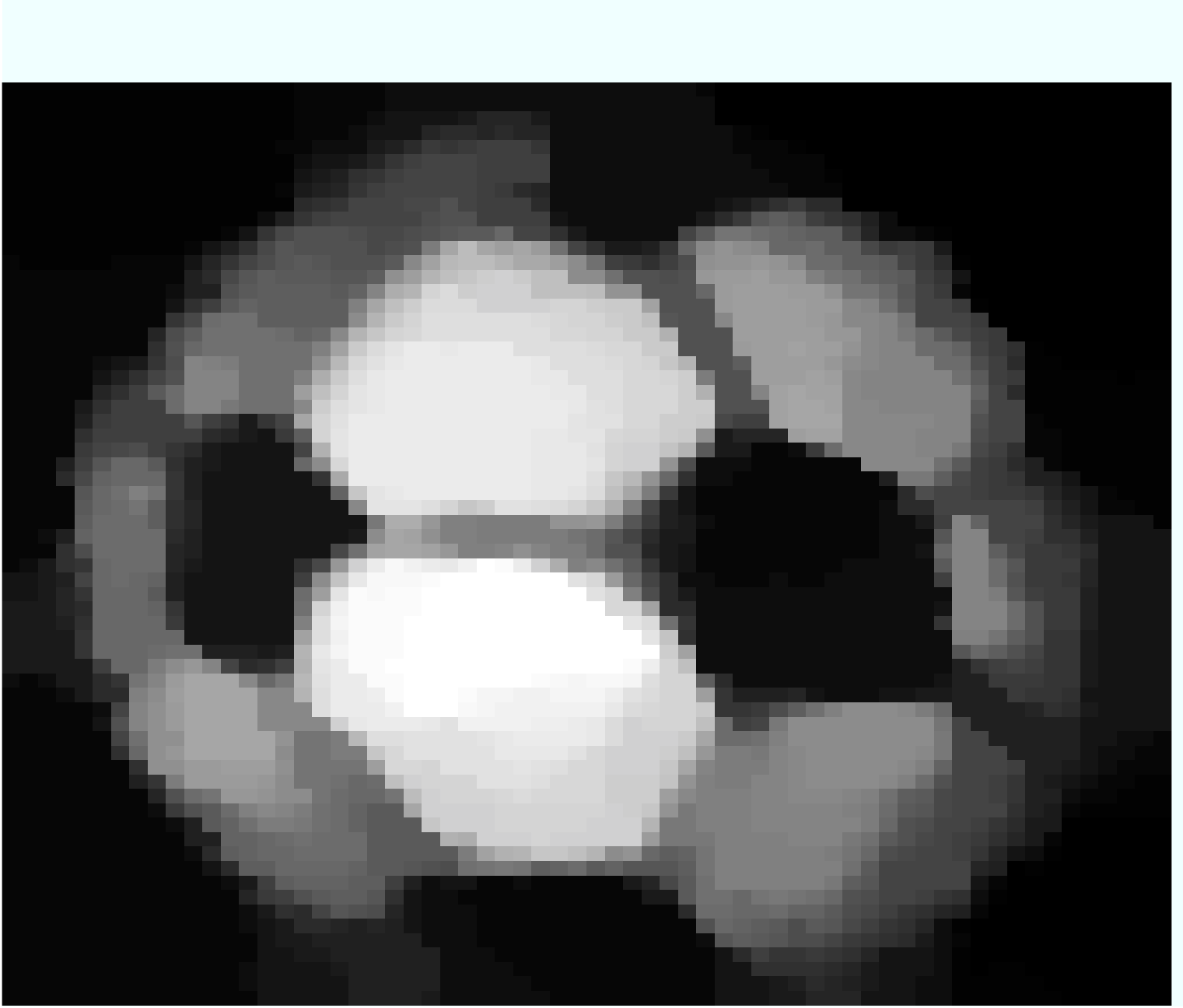}		
		\caption{TFOCS\_con, $24$ sec.}
		\label{figcscamera_c}%
         \end{subfigure}
         \\
	\begin{subfigure}[b]{0.26\textwidth}
		\includegraphics[width=\textwidth]{fig7_3_d}	
		\caption{pdNCG, $15$ sec.}
		\label{figcscamera_d}%
         \end{subfigure}
         \quad
	\begin{subfigure}[b]{0.26\textwidth}
		\includegraphics[width=\textwidth]{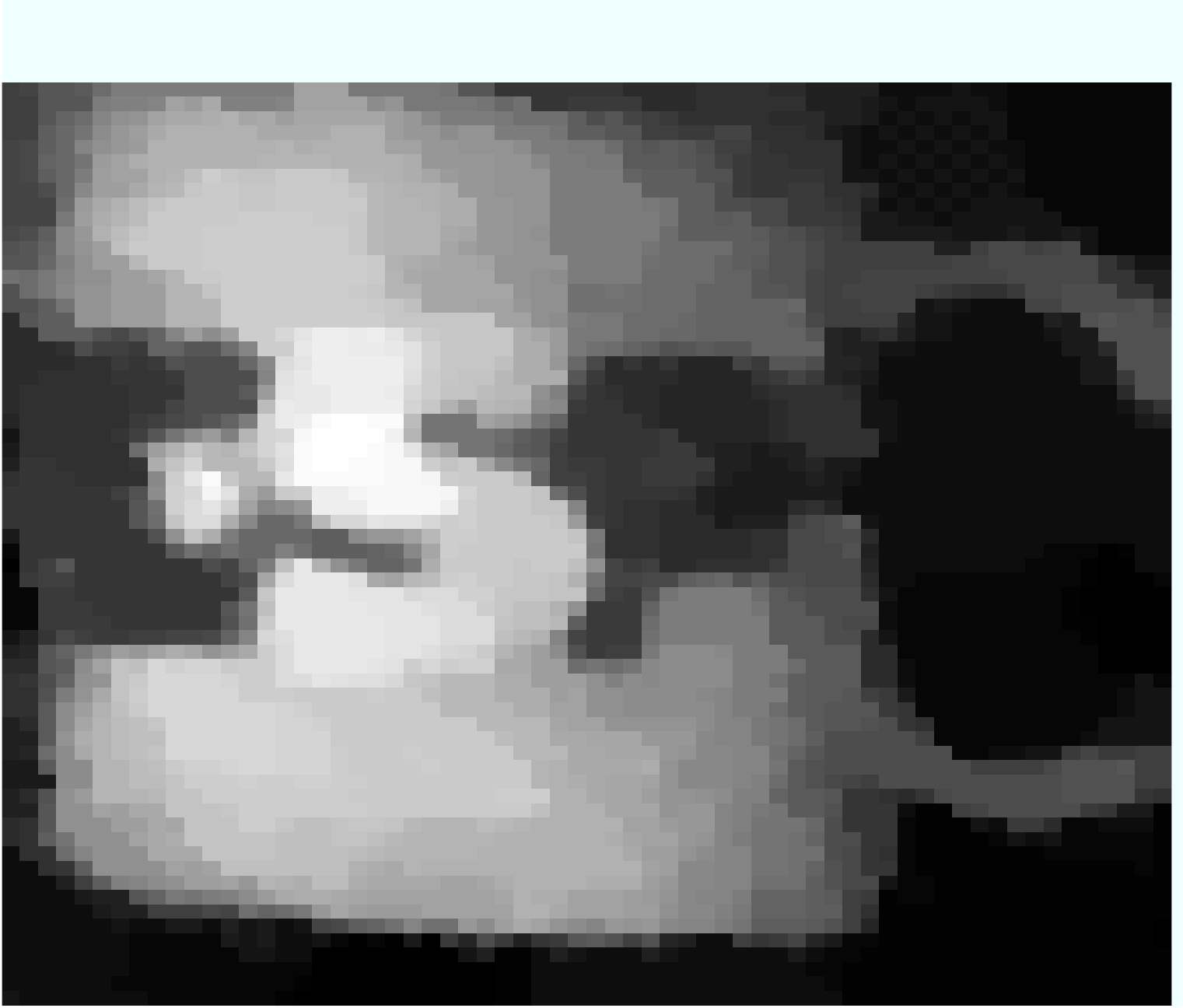}	
		\caption{TFOCS\_con, $37$ sec.}
		\label{figcscamera_e}%
         \end{subfigure}
         \quad
	\begin{subfigure}[b]{0.26\textwidth}
		\includegraphics[width=\textwidth]{fig7_3_f}	
		\caption{pdNCG, $15$ sec.}
		\label{figcscamera_f}%
         \end{subfigure}
         \\
	\begin{subfigure}[b]{0.26\textwidth}
		\includegraphics[width=\textwidth]{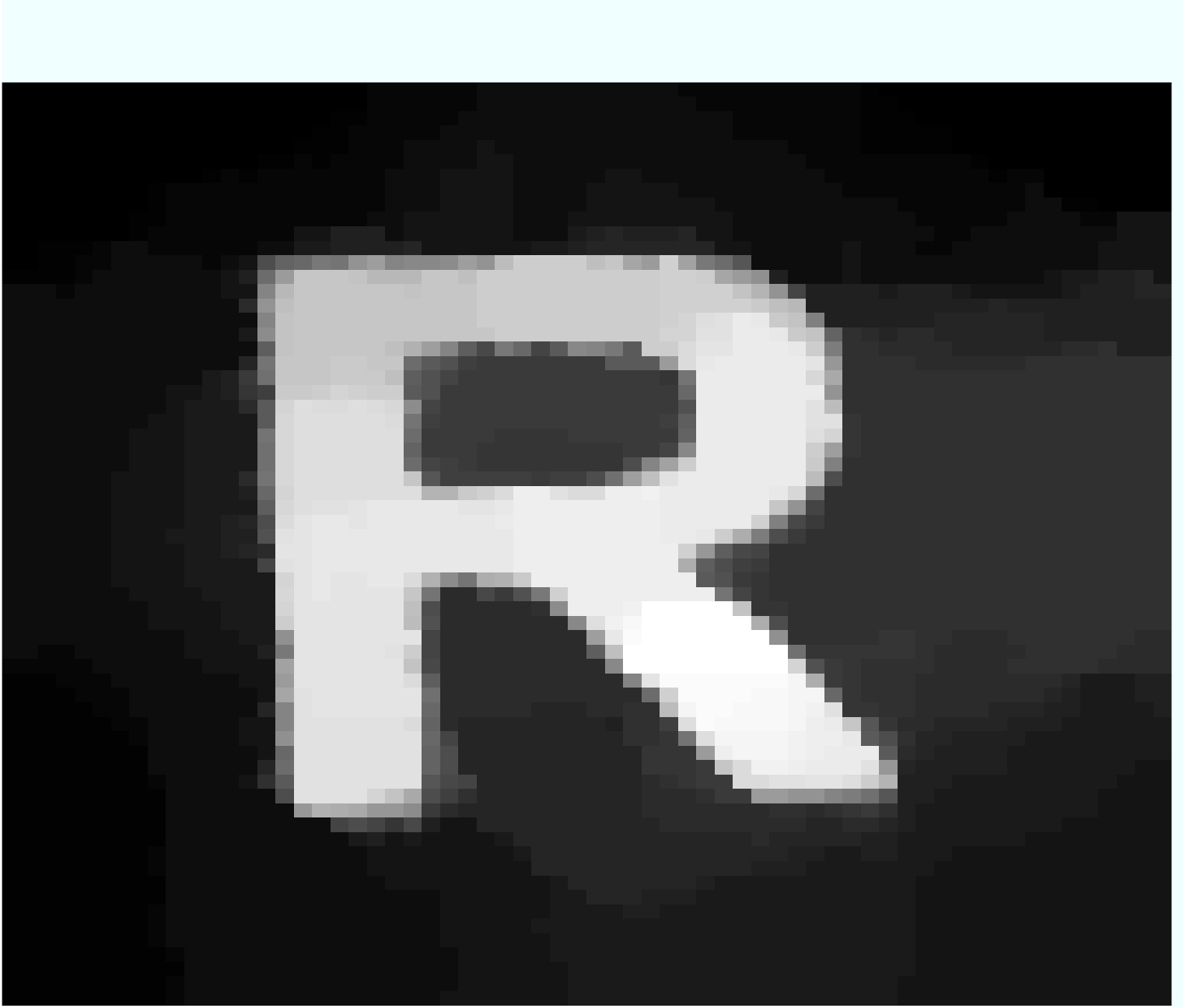}	
		\caption{TFOCS\_con, $26$ sec.}
		\label{figcscamera_g}%
         \end{subfigure}
         \quad
	\begin{subfigure}[b]{0.26\textwidth}
		\includegraphics[width=\textwidth]{fig7_3_h}	
		\caption{pdNCG, $27$ sec.}
		\label{figcscamera_h}%
         \end{subfigure}
         \quad
	\begin{subfigure}[b]{0.26\textwidth}
		\includegraphics[width=\textwidth]{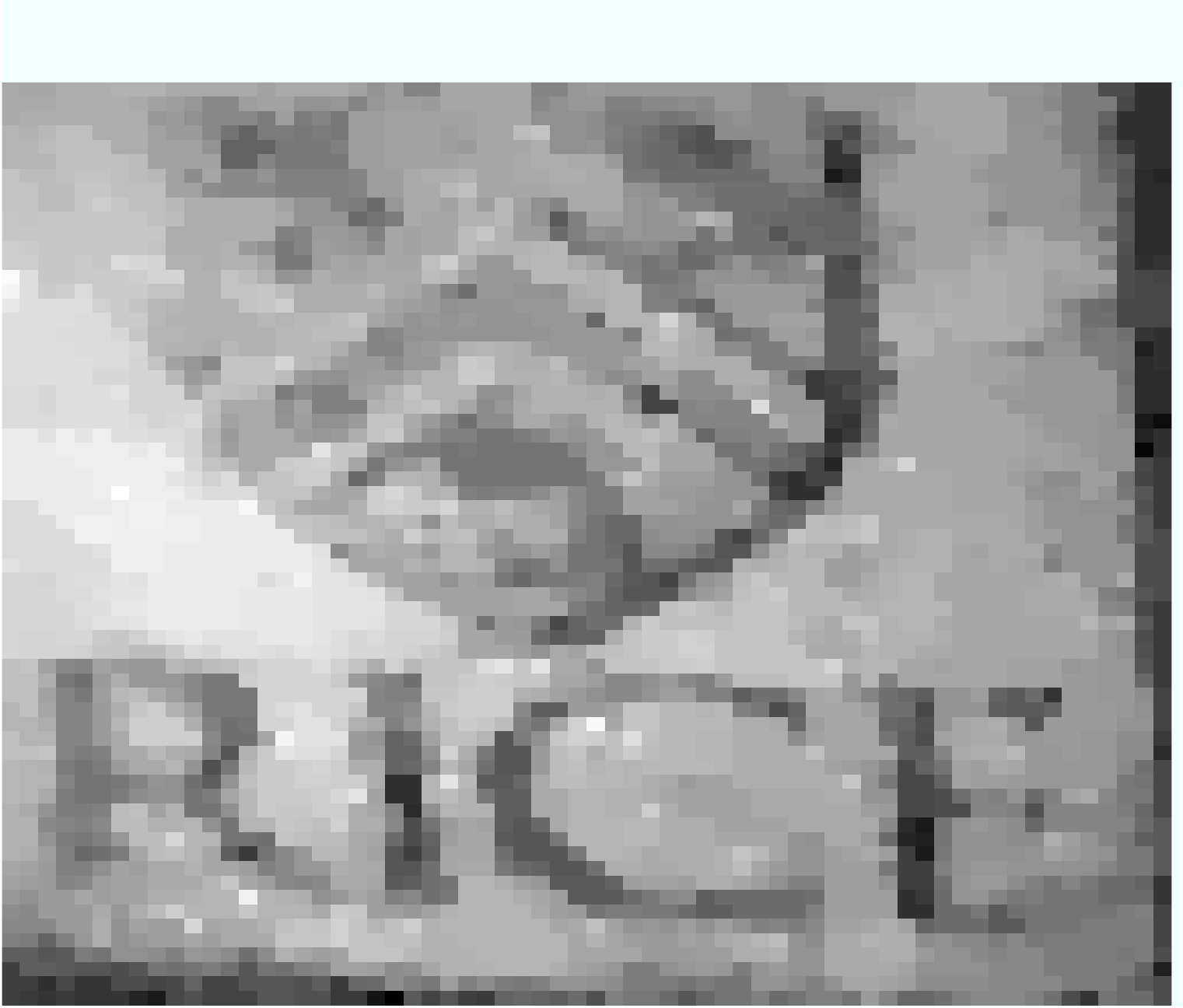}	
		\caption{TFOCS, $49$ sec.}
		\label{figcscamera_j}%
         \end{subfigure}
         \quad
	\begin{subfigure}[b]{0.26\textwidth}
		\includegraphics[width=\textwidth]{fig7_3_k}	
		\caption{pdNCG, $33$ sec.}
		\label{figcscamera_k}%
         \end{subfigure}
	\caption{Experiment on realistic image reconstruction where the samples are acquired using a single-pixel camera.
	The subcaptions of the figures show the required seconds of CPU time for the image to be reconstructed for each solver.
	}
	\label{figcscamera}%
\end{figure}

\newpage

\subsection{Radar tone reconstruction}
In this subsection we present the performance of TFOCS\_unc and pdNCG for the radar tone reconstruction problem, which
was described in Subsection \ref{subsec:problemsets}. We exclude TVAL3 from this experiment because it is implemented 
to solve only TV problems. We also exclude TFOCS\_unc since it is superseded in all previous experiments by TFOCS\_con. 

The results of the comparison are presented in Figure \ref{fig7_4}. 
Observe in Figures \ref{fig7_4_a} and \ref{fig7_4_b} that both solvers recovered a solution of similar accuracy but pdNCG was slightly faster. 
The solution of TFOCS\_con has SNR $62.3$ dB
and the solution of pdNCG has SNR $64.7$ dB.
It is important to mention that the problems 
were not over-solved. TFOCS\_con was tuned as suggested by its authors in a similar experiment which is shown in Subsection $6.5$ of \cite{convexTemplates}. 

In Figure \ref{fig7_4_c} we plot the SNR against CPU time for every iteration of pdNCG and TFOCS\_con. Observe that nearly for all iterations pdNCG the approximate 
solutions of pdNCG had larger SNR than the approximate solutions of TFOCS\_con.

For this experiment we enabled preconditioning for pdNCG. Although, in Subsection \ref{subsec:howsolvesystems} we mentioned that preconditioning might 
affect adversely the performance of pdNCG in terms of CPU time. However, we noticed that by enabling preconditioning pdNCG was more stable. Therefore, 
we believe that it is worth paying the cost of a slight increase of a CPU time to improve the overall robustness of pdNCG.

\begin{figure}%
\centering
	\begin{subfigure}[b]{0.48\textwidth}
		\includegraphics[width=\textwidth]{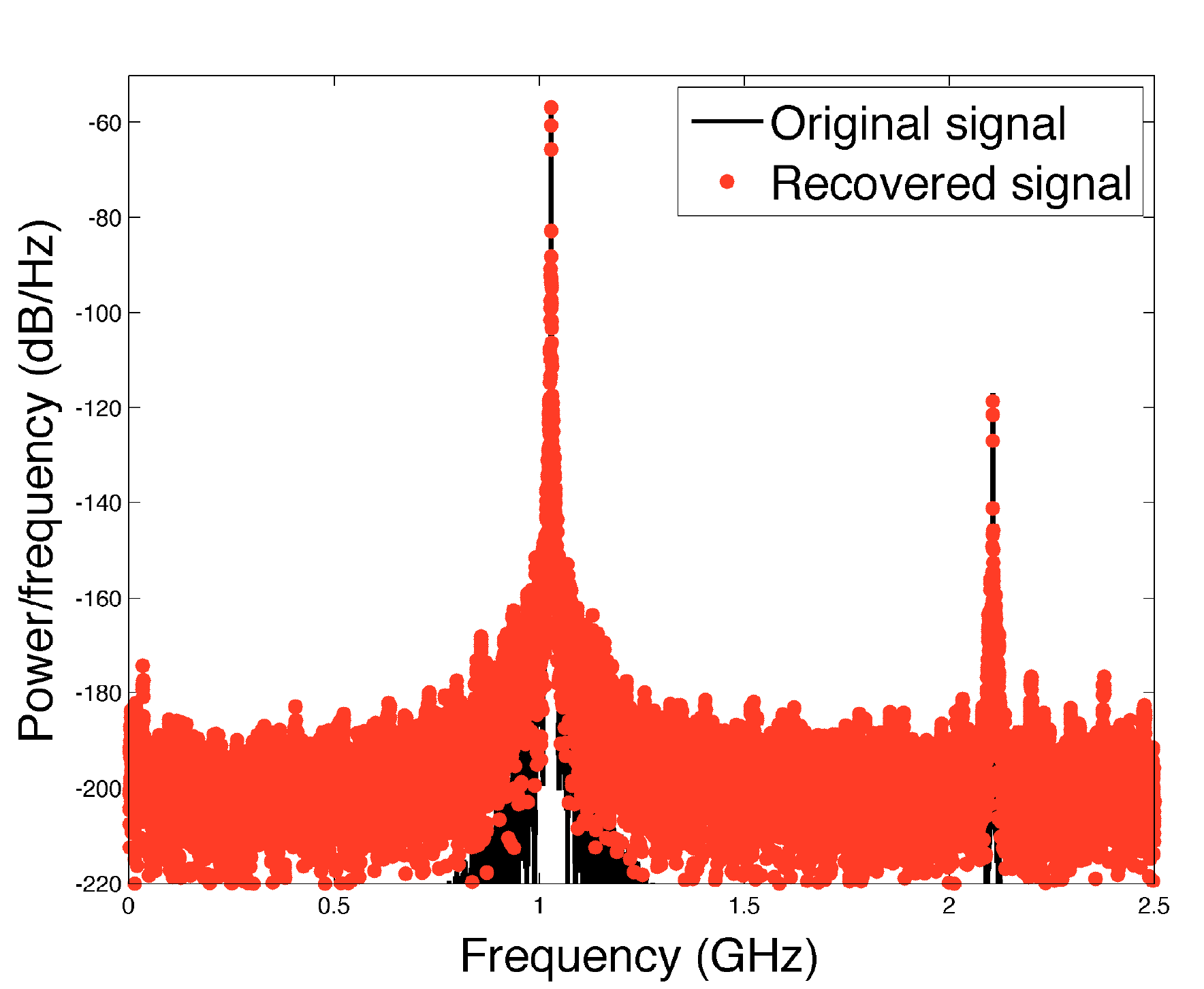}		
		\caption{pdNCG, SNR $64.7$, 525 sec}
		\label{fig7_4_a}%
         \end{subfigure}
         \quad
	\begin{subfigure}[b]{0.48\textwidth}
		\includegraphics[width=\textwidth]{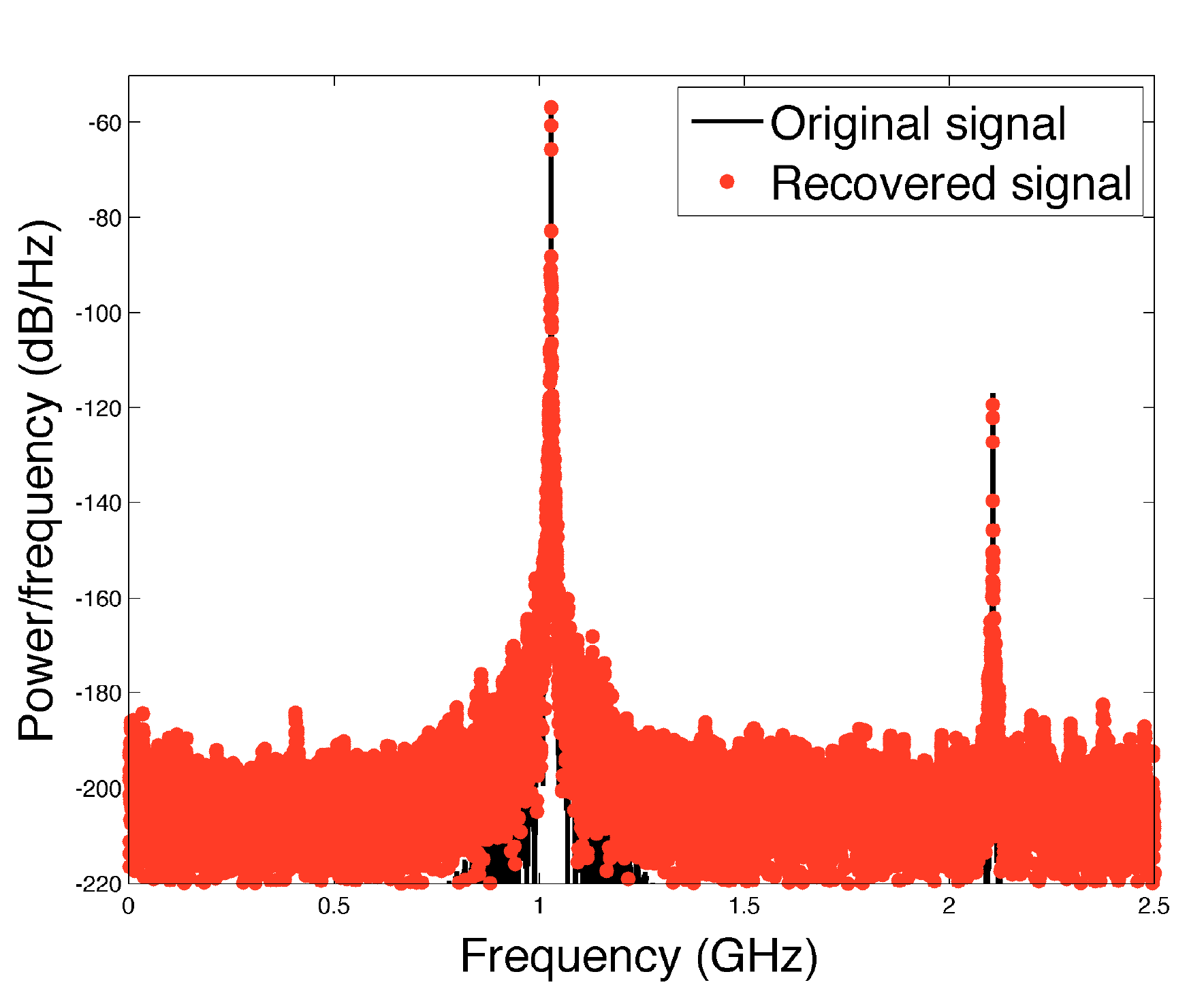}	
		\caption{TFOCS\_con, SNR $62.3$, 556 sec}
		\label{fig7_4_b}%
         \end{subfigure}
         \\
	\begin{subfigure}[b]{0.48\textwidth}
		\includegraphics[width=\textwidth]{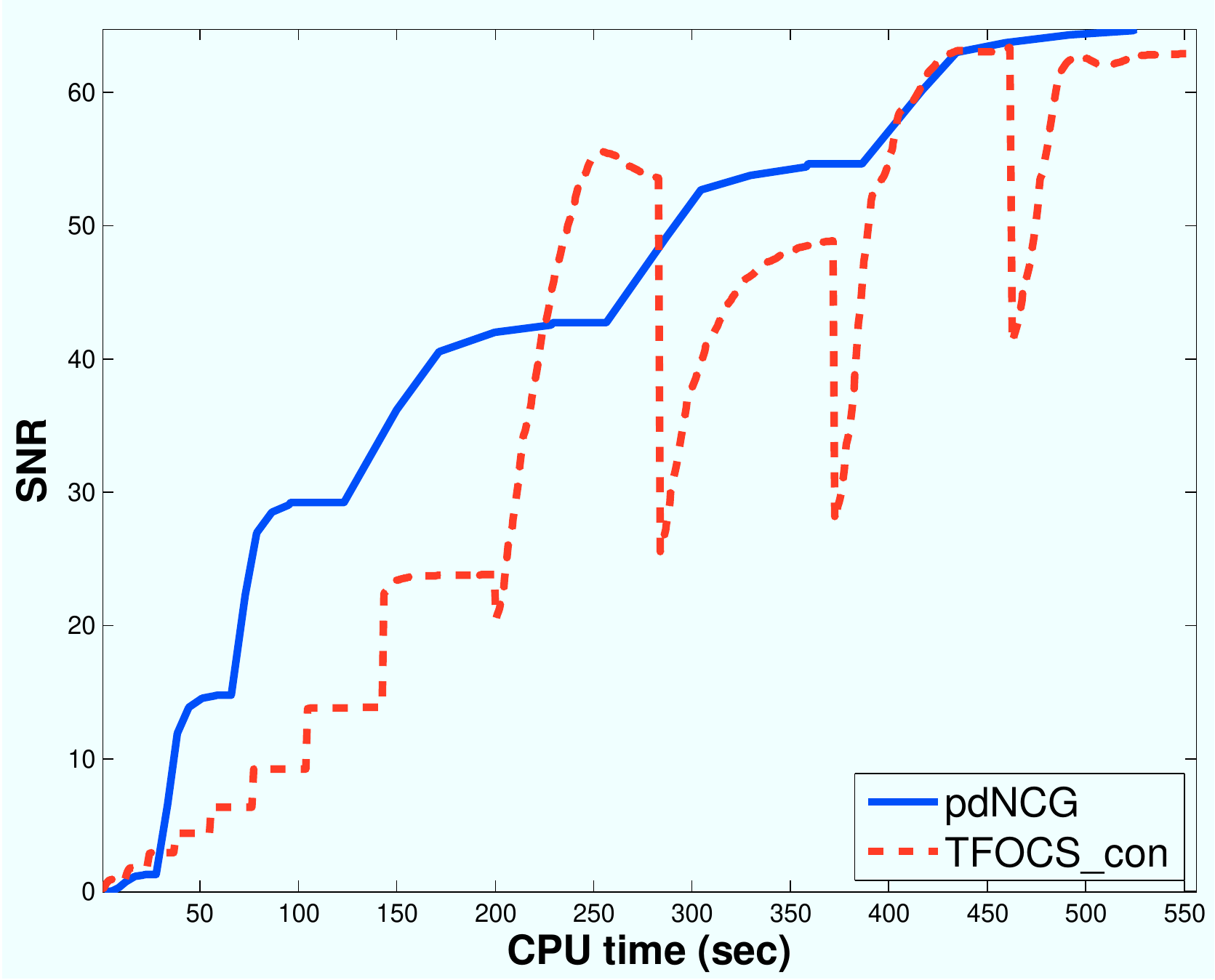}	
		\caption{SNR against CPU time}
		\label{fig7_4_c}%
         \end{subfigure}
	\caption{Reconstruction of two radio-frequency radar tones by TFOCS\_con and pdNCG. In Figure \ref{fig7_4_a} the reconstructed signal by pdNCG is shown. The signal has SNR $64.7$ dB
	and it required $525$ seconds of CPU time to be reconstructed. In Figure \ref{fig7_4_b}
	the reconstructed signal by TFOCS\_con is shown. The signal has PSNR $62.3$ dB and it required $556$ seconds of CPU time to be reconstructed. In Figure \ref{fig7_4_c} we present 
	the SNR for each iteration of pdNCG and TFOCS\_con against CPU time.
	}
	\label{fig7_4}%
\end{figure}

\section{Conclusions}\label{sec:concl}
Recently there has been great interest in the development of optimization methods for the solution of compressed sensing 
problems.
The methods that have been developed so far are mainly first-order methods.
This is because first-order methods have inexpensive iterations and frequently 
offer fast initial progress in the optimization process. On the contrary, second-order methods are considered to be rather expensive. 
The reason is that often access to second-order information requires the solution of linear systems. In this paper we 
develop a second-order method, a primal-dual Newton Preconditioned Conjugate Gradients. We show
that an \textit{approximate} solution of linear systems which arise is sufficient to speed up an iterative method and additionally
make it more robust. Moreover, we show that for compressed sensing problems an inexpensive preconditioner can be designed that
speeds up even further the approximate solution of linear systems. Extensive numerical experiments are presented which support our findings. 
Spectral analysis of the preconditioner is performed and shows its very good limiting behaviour.

\section*{Acknowledgement} We are grateful to the anonymous reviewers who made crucial recommendations which improved the clarity 
of contribution and the quality of this paper.


\bibliographystyle{plain}
\bibliography{KFandJG.bib}

\appendix

\section{Continuous path}
In the following lemma we show that 
$
x_{c,\mu} := \argmin f_c^\mu(x)
$ ($f_c^\mu$ is defined in \eqref{prob2}) 
for $c$ constant is a continuous and differentiable function of $\mu$.
\begin{lemma}\label{lem:7}
Let $c$ be constant and consider $x_{c,\mu}$ as a functional of $\mu$. If condition \eqref{bd54} is satisfied, then
$x_{c,\mu}$ is continuous and differentiable.
\end{lemma}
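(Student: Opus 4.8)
The plan is to characterise $x_{c,\mu}$ via the first-order optimality condition \eqref{optcondprob2} and invoke the implicit function theorem with $\mu$ as parameter. Define $F\colon \mathbb{R}^n\times(0,\infty)\to\mathbb{R}^n$ by $F(x,\mu):=\nabla_x f_c^\mu(x) = c\,\mbox{Re}(WDW^*)x + A^\intercal(Ax-b)$, where $D=D(x,\mu)$ has diagonal entries $(\mu^2+|W_i^*x|^2)^{-1/2}$ as in \eqref{bd61}. Since $f_c^\mu$ is convex and differentiable, $x$ minimises $f_c^\mu$ if and only if $F(x,\mu)=0$, so it suffices to show that for each $\mu>0$ this equation has a unique solution which depends smoothly on $\mu$.

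First I would establish that $\nabla_x^2 f_c^\mu(x) = c\nabla^2\psi_\mu(W^*x)+A^\intercal A$ is positive definite for every $x$ and every $\mu>0$. From the rewriting \eqref{eq120}, $\nabla^2\psi_\mu(W^*x)$ is a sum of two positive semidefinite matrices, one of which is $\tfrac{\mu^2}{2}(WD^3W^*+\bar W D^3\bar W^*)$; for real $v$ one computes $v^\intercal(WD^3W^*+\bar W D^3\bar W^*)v = 2\sum_{i=1}^l D_i^3|W_i^*v|^2$, which (since $D_i>0$ when $\mu>0$) vanishes precisely when $v\in\mbox{Ker}(W^*)$. Hence $v^\intercal\nabla^2\psi_\mu(W^*x)v=0$ forces the $\mu^2$-term to vanish, so $v\in\mbox{Ker}(W^*)$; then $v^\intercal\nabla_x^2 f_c^\mu(x)v=0$ additionally forces $Av=0$, and assumption \eqref{bd54} gives $v=0$. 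This shows $f_c^\mu$ is strictly convex; combined with coercivity — along any ray $x_0+tv$ the pseudo-Huber term grows if $W^*v\neq0$, while if $W^*v=0$ then $Av\neq0$ by \eqref{bd54} and the least-squares term grows — it yields existence and uniqueness of $x_{c,\mu}=\argmin f_c^\mu$, characterised by $F(x_{c,\mu},\mu)=0$.

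Next, $F$ is $C^\infty$ on $\mathbb{R}^n\times(0,\infty)$, since each entry $(\mu^2+|W_i^*x|^2)^{-1/2}$ of $D$ is a smooth function of $(x,\mu)$ there (the radicand is bounded below by $\mu^2>0$). Its partial Jacobian in $x$ is $\partial_x F(x,\mu)=\nabla_x^2 f_c^\mu(x)$, which by the previous step is invertible at $(x_{c,\mu_0},\mu_0)$ for every fixed $\mu_0>0$. The implicit function theorem then produces a neighbourhood of $\mu_0$ and a $C^\infty$ map $\mu\mapsto\hat x(\mu)$ with $F(\hat x(\mu),\mu)=0$ and $\hat x(\mu_0)=x_{c,\mu_0}$; by the uniqueness just established, $\hat x(\mu)=x_{c,\mu}$ near $\mu_0$. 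Since $\mu_0>0$ was arbitrary, $\mu\mapsto x_{c,\mu}$ is smooth, in particular continuous and differentiable, on $(0,\infty)$ — and differentiating $F(x_{c,\mu},\mu)=0$ gives $\tfrac{d}{d\mu}x_{c,\mu}=-(\nabla_x^2 f_c^\mu(x_{c,\mu}))^{-1}\partial_\mu F(x_{c,\mu},\mu)$.

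The main obstacle is the invertibility of the Hessian: the whole argument hinges on ruling out a nonzero direction that lies simultaneously in $\mbox{Ker}(\nabla^2\psi_\mu(W^*x))$ and $\mbox{Ker}(A)$, and the only handle on $\mbox{Ker}(\nabla^2\psi_\mu)$ is the $\mu^2$-weighted term in \eqref{eq120}, which is exactly where $\mu>0$ and hypothesis \eqref{bd54} are essential. Once invertibility is in place, the implicit function theorem step is routine.
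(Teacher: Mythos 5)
Your proof is correct and follows essentially the same route as the paper: both rest on the positive definiteness of $\nabla^2 f_c^\mu(x)$ under condition \eqref{bd54} and on implicit differentiation of the stationarity equation $c\nabla\psi_\mu(W^*x)+A^\intercal(Ax-b)=0$ with respect to $\mu$. The paper carries out this differentiation informally and simply asserts the Hessian's definiteness, whereas you invoke the implicit function theorem explicitly and supply the omitted verifications (definiteness via the $\mu^2$-weighted term, coercivity, uniqueness of the minimiser), so your write-up is the rigorous rendering of the same argument.
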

\begin{proof}
The optimality conditions of problem \eqref{prob2} are 
$$
c\nabla \psi_\mu(W^*x) +A^\intercal (Ax-b) = 0.
$$ 
According to definition of $x_{c,\mu}$, we have 
\begin{align*}
c\nabla \psi_\mu(W^*x_{c,\mu}) +A^\intercal (Ax_{c,\mu}-b) & = 0 & \Longrightarrow \\
c \frac{d \nabla \psi_\mu(W^*x_{c,\mu}) }{d \mu} + A^\intercal A \frac{d x_{c,\mu}}{d\mu} & =0 & \Longrightarrow \\
c \Big(\nabla^2 \psi_\mu(W^*x_{c,\mu}) \frac{d x_{c,\mu}}{d\mu} + \frac{d \nabla \psi_\mu(W^*x)}{d\mu} \Big|_{x_{c,\mu}}\Big) + A^\intercal A \frac{dx_{c,\mu}}{d\mu} & =0 & \Longleftrightarrow \\ 
\Big(c\nabla^2 \psi_\mu(W^*x_{c,\mu}) + A^\intercal A\Big)\frac{d x_{c,\mu}}{d\mu} + c\frac{d \nabla \psi_\mu(W^*x)}{d\mu} \Big|_{x_{c,\mu}} & =0 &\Longleftrightarrow \\
\nabla^2 f_c^\mu(W^*x_{c,\mu})\frac{d x_{c,\mu}}{d\mu} + c\frac{d \nabla \psi_\mu(W^*x)}{d\mu} \Big|_{x_{c,\mu}} & =0, & 
\end{align*}
where ${d \nabla \psi_\mu(W^*x)}/{d\mu} |_{x_{c,\mu}}$ is the first-order derivative of $\nabla \psi_\mu(W^*x)$ as a functional of $\mu$, measured at $x_{c,\mu}$. Notice that due to condition 
$
\mbox{Ker}(W^*)\cap\mbox{Ker}(A)=\{0\}
$ 
we have that $\nabla^2 f_c^\mu(x)$ is positive definite $\forall x$, hence $x_{c,\mu}$ is unique. Therefore, the previous system has a unique solution, which means that
$x_{c,\mu}$ is uniquely differentiable as a functional of $\mu$ with $c$ being constant. Therefore, $x_{c,\mu}$ is continuous as a functional of $\mu$. 
\end{proof}

\begin{remark}\label{rem:3}
Lemma \ref{lem:7} and continuity imply that there exists sufficiently small smoothing parameter $\mu$ such that $\|x_{c,\mu}-x_c\|_2< \omega$ for any arbitrarily small $\omega>0$.
\end{remark}

\end{document}